\newtheorem{Thm}{Theorem}{\bfseries}{\itshape}
\newtheorem*{Thm*}{Theorem}{\bfseries}{\itshape}
\newtheorem{Cor}{Corollary}{\bfseries}{\itshape}
\newtheorem{Prop}[Cor]{Proposition}{\bfseries}{\itshape}
\newtheorem{Lem}[Cor]{Lemma}{\bfseries}{\itshape}
\newtheorem*{Lem*}{Lemma}{\bfseries}{\itshape}
\newtheorem{Fact}[Cor]{Fact}{\bfseries}{\itshape}
{\bfseries}{\itshape}
\newtheorem{Def}[Cor]{Definition}{\bfseries}{\rmfamily}
\newtheorem{Ex}[Cor]{Example}{\scshape}{\rmfamily}
\newtheorem{Rem}[Cor]{Remark}{\scshape}{\rmfamily}
{\bfseries}{\itshape}
\renewcommand\ge{\geqslant} \renewcommand\le{\leqslant}
\let\tildeaccent=\~ \let\hataccent=\^
\renewcommand\~[1]{\widetilde{#1}}
\def\<{\left<} \def\>{\right>} \def\({\left(} \def\){\right)}
\def\abs#1{\left\vert #1 \right\vert} \def\norm#1{\left\Vert #1
  \right\Vert} \def\size#1{\mathbf S\left(#1 \right)}
\let\parasymbol=\S \def\secref#1{\parasymbol\ref{#1}}
\let\polishL=l \def\Zoladek.{\.Zol\c adek}
\def\Re{\operatorname{Re}} \def\Im{\operatorname{Im}}
\def\etc.{\emph{etc}.}
 \def\Sing{\operatorname{Sing}}
\def\:{\colon} \def\R{{\mathbb R}} \def\C{{\mathbb C}} \def\Z{{\mathbb
    Z}} \def\N{{\mathbb N}} \def\Q{{\mathbb Q}}
\let\PolishL=\L 
\def\L{{\mathbb L}}
 \def\e{\varepsilon} \def\S{\varSigma}
\def\l{\lambda}   
\def\poly{{\operatorname{poly}}}
 \def\d{\,\mathrm d}
 \def\Lojas.{\PolishL ojasiewicz}
\def\cN{{\mathcal N}} \def\cH{{\mathcal H}}
\def\cP{{\mathcal P}}
\def\cF{{\mathcal F}}  
\def\cI{{\mathcal I}}  
 \def\cD{{\mathcal D}}
\def\cM{{\mathcal M}}
\def\cO{{\mathcal O}}
\def\cV{{\mathcal V}}
\def\rest#1{{\vert_{#1}}}
\def\vol{\operatorname{Vol}}
\def\Gal{\operatorname{Gal}}
\def\w{\omega}
\def\id{\operatorname{id}}
\def\Qa{\Q^{\mathrm{alg}}}
\def\alg{\mathrm{alg}}
\def\trans{\mathrm{trans}}
\def\size{\mathrm{size}}
\def\RE{\mathrm{RE}}
\def\den{\operatorname{den}}
\def\vf{{\mathbf f}}
\def\vg{{\mathbf g}}
\def\vx{{\mathbf x}}
\def\vz{{\mathbf z}}
\def\vw{{\mathbf w}}
\def\vp{{\mathbf p}}
\def\va{{\mathbf a}}
\def\vc{{\mathbf c}}
\def\vk{{\mathbf k}}
\def\vchi{{\boldsymbol\chi}}
\begin{document}

\title{Wilkie's conjecture for restricted elementary functions}

\author{Gal Binyamini} 
\address{Weizmann Institute of Science, Rehovot, Israel}
\email{gal.binyamini@weizmann.ac.il}

\author{Dmitry Novikov}
\address{Weizmann Institute of Science, Rehovot, Israel}
\email{dmitry.novikov@weizmann.ac.il}

\begin{abstract}
  We consider the structure $\R^\RE$ obtained from $(\R,<,+,\cdot)$ by
  adjoining the restricted exponential and sine functions. We prove
  Wilkie's conjecture for sets definable in this structure: the number
  of rational points of height $H$ in the transcendental part of any
  definable set is bounded by a polynomial in $\log H$. We also prove
  two refined conjectures due to Pila concerning the density of
  algebraic points from a fixed number field, or with a fixed
  algebraic degree, for $\R^\RE$-definable sets.
\end{abstract}
\date{\today}
\maketitle

\section{Introduction}

\subsection{Statement of the main results}
Our main object of study is the structure
\begin{equation}
  \R^\RE = (\R,<,+,\cdot,\exp\rest{[0,1]},\sin\rest{[0,\pi]}).
\end{equation}
The superscript $\RE$ stands for ``restricted elementary''. We
consider the natural language for $\R^\RE$, where we also include
constants for each real number. We will refer to formulas in this
language as $\R^\RE$-formulas.

For a set $A\subset\R^m$ we define the \emph{algebraic part} $A^\alg$
of $A$ to be the union of all connected semialgebraic subsets of $A$
of positive dimension. We define the \emph{transcendental part}
$A^\trans$ of $A$ to be $A\setminus A^\alg$.

Recall that the \emph{height} of a (reduced) rational number
$\tfrac a b\in\Q$ is defined to be $\max(|a|,|b|)$. More generally,
for $\alpha\in\Qa$ we denote by $H(\alpha)$ its absolute
multiplicative height as defined in \cite{bombieri:heights}. For a
vector $\boldsymbol\alpha$ of algebraic numbers we denote by
$H({\boldsymbol\alpha})$ the maximum among the heights of the coordinates.

Let $\cF\subset\C$ denote a number field which will be fixed
throughout this paper. For a set $A\subset\C^m$ we denote the set of
$\cF$-points of $A$ by $A(\cF):=A\cap\cF^m$ and denote
\begin{equation}
  A(\cF,H) := \{\vx\in A(\cF):H(\vx)\le H\}.
\end{equation}
The following is our main result.
\begin{Thm}\label{thm:main-F}
  Let $A\subset\R^m$ be $\R^\RE$-definable. Then there exist integers
  $\kappa:=\kappa(A)$ and $N=N(A,[\cF:\Q])$ such that
  \begin{equation}
    \#A^\trans(\cF,H) \le N\cdot(\log H)^\kappa.
  \end{equation}
\end{Thm}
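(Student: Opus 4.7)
The plan is to follow the broad Pila--Wilkie strategy, but to replace its $C^r$-parameterizations by much sharper \emph{complex analytic} charts adapted to the two generators $\exp\rest{[0,1]}$ and $\sin\rest{[0,\pi]}$, and then to combine these with a quantitative interpolation determinant.

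First, I would produce, for each $\R^\RE$-definable $A$, a finite cover of $A$ by images of holomorphic charts $\phi_i\colon\Delta_i\to\C^m$, where each $\Delta_i$ is a polydisc of dimension $\dim A$ and each $\phi_i$ extends holomorphically to a larger concentric polydisc with uniform control on its sup-norm and on the ``Bernstein/Weierstrass data'' describing it. The construction would proceed by induction on the syntactic complexity of the defining formula: since $\exp$ and $\sin$ are restrictions of entire functions, the atomic $\R^\RE$-conditions extend to holomorphic data on a complex neighbourhood, while projections and coordinate changes are handled by a quantitative Weierstrass preparation. The combinatorial complexity of the resulting decomposition should depend only on $A$, not on $H$.

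Next comes the interpolation determinant. Fix a chart $\phi\colon\Delta\to\C^m$ whose image contains $M$ of the points of $A(\cF,H)$, and let $x_1,\dots,x_M\in\Delta$ be their preimages. For $D\in\N$ I would form the $M\times\binom{m+D}{m}$ matrix of evaluations at the $x_j$ of a monomial basis of degree $\le D$ in the coordinates of $\phi$. A maximal nonzero minor is, on one hand, an $\cF$-rational number whose denominator is bounded by a power of $H$ depending on $D$ and $[\cF:\Q]$, and, on the other hand, a holomorphic function of the $x_j$ whose size is bounded by Cauchy estimates together with a \emph{polynomial} Bernstein/multiplicity bound for polynomial expressions in the coordinates, $\exp$ and $\sin$ on the holomorphic extension of $\phi$. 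Balancing the two estimates would force, unless all $M$ points lie on an algebraic hypersurface of degree $D=O(\log H)$, a contradiction; hence $\phi(\Delta)(\cF,H)$ lies on $\poly(\log H)$ algebraic hypersurfaces.

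A routine induction on $\dim A$ then concludes: for any such hypersurface $Z$, the set $A\cap Z$ is $\R^\RE$-definable of strictly smaller dimension and $(A\cap Z)^\alg\subset A^\alg$, so the inductive hypothesis bounds $(A\cap Z)^\trans(\cF,H)$ polylogarithmically; summing over the $\poly(\log H)$ hypersurfaces yields the claimed $N(\log H)^\kappa$, with $\kappa$ depending only on the cell-decomposition data of $A$ and the dependence on $[\cF:\Q]$ entering through the denominator estimate. The main obstacle, and the feature that distinguishes $\R^\RE$ from a general $o$-minimal structure, is simultaneously ensuring (i) that the holomorphic parameterization extends to a controlled polydisc of fixed shape and (ii) that the Bernstein/multiplicity bound entering the determinant estimate is polynomial in $D$ rather than, say, tower-of-exponential; it is precisely the \emph{restricted} nature of $\exp$ and $\sin$ that makes both achievable and permits the final polylogarithmic bound in $H$.
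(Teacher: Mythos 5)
Your overall architecture (holomorphic charts, an interpolation determinant trapping the $\cF$-points of height $\le H$ in hypersurfaces of degree $O(\log H)$, then induction on dimension) matches the paper's. But the induction on dimension, as you state it, does not close. The hypersurfaces $Z$ you produce have degree growing like $\log H$, so $A\cap Z$ is a \emph{different} definable set for each $H$; invoking ``the inductive hypothesis'' for $A\cap Z$ gives a bound $N(A\cap Z)\cdot(\log H)^{\kappa(A\cap Z)}$ in which both constants depend on $Z$ and hence on $H$, and you have $\poly(\log H)$ such sets. To make the induction legitimate one must prove a statement with an explicit complexity parameter and show that \emph{every} constant in the argument --- the number of charts covering $A\cap Z$, the multiplicity/Bernstein data of each chart, and the degree of the next generation of hypersurfaces --- is \emph{polynomial} in $\deg Z$. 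This uniformity is the technical heart of the matter: in the paper it is supplied by working in the Pfaffian category, where Khovanskii's Bezout theorem and the Gabrielov--Vorobjov complexity bounds give the multiplicity estimates, and a metric-entropy (Vitushkin) argument produces, around every point, a Weierstrass polydisc of polyradius bounded below by $1/\poly(\beta)$, so that $\poly(\beta)$ polydiscs of multiplicity $\poly(\beta)$ suffice. Your ``polynomial Bernstein/multiplicity bound'' names the right desideratum but offers no mechanism for it, and your claim that the covering complexity ``depends only on $A$, not on $H$'' is only true at the first step of the induction.

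A second, smaller gap is the reduction from an arbitrary $\R^\RE$-definable set to something admitting a holomorphic parameterization. Definable sets involve existential quantifiers, and projections of complex-analytic sets are not complex-analytic; ``induction on the syntactic complexity of the formula'' does not handle this. The paper uses van den Dries's quantifier elimination for $I$ in the language $L^D_\RE$ with restricted division, and then eliminates the division symbol by passing to \emph{admissible projections} of holomorphic-Pfaffian varieties (each fiber of the projection being a single point). Some device of this kind is needed before the determinant method can be applied.
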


Theorem~\ref{thm:main-F} establishes a conjecture of Wilkie
\cite[Conjecture~1.11]{pila-wilkie} for the case of the restricted
exponential function. It also establishes a refined version due to
Pila \cite[Conjecture~1.4]{pila:exp-alg-surface}, who conjectured that
the exponent $\kappa$ can be chosen to be independent of the field
$\cF$. For a statement of the full conjectures and an outline of the
history of the problem see~\secref{sec:background}.

We also prove an additional conjecture of Pila
\cite[Conjecture~1.5]{pila:exp-alg-surface} (in the case of the
restricted exponential) on counting algebraic points of a fixed degree
without restricting to a fixed number field. For $k\in\N$ we denote
\begin{align}
  A(k) &:= \{\vx\in A:[\Q(x_1):\Q],\ldots,[\Q(x_m):\Q]\le k\}, \\
  A(k,H) &:= \{\vx\in A(k):H(\vx)\le H\}.  
\end{align}
Then we have the following.
\begin{Thm}\label{thm:main-k}
  Let $A\subset\R^m$ be $\R^\RE$-definable. Then there exist
  integers $\kappa:=\kappa(A,k)$ and $N=N(A,k)$ such that
  \begin{equation}
    \#A^\trans(k,H) \le N\cdot(\log H)^\kappa.
  \end{equation}
\end{Thm}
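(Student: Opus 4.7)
My plan is to deduce Theorem~\ref{thm:main-k} from Theorem~\ref{thm:main-F} via a polynomial-encoding reduction. Stratify $A(k,H)$ over the finite set of degree patterns $\mathbf{d}=(d_1,\ldots,d_m)\in\{1,\ldots,k\}^m$ with $d_i=[\Q(x_i):\Q]$, so that it suffices to bound each $\#A^\trans(\mathbf{d},H)$. For $\vx\in A(\mathbf{d},H)$, a standard comparison between Weil heights and minimal-polynomial heights produces the tuple $\vc_\vx\in\Q^{\sum_i d_i}$ of non-leading coefficients of the monic minimal polynomials of the $x_i$, of $\Q$-height at most $cH^k$.

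For each root-selection $\mathbf{j}=(j_1,\ldots,j_m)$ declaring $x_i$ to be the $j_i$-th real root of its minimal polynomial, introduce the $\R^\RE$-definable set
\[
  \widetilde A^{\mathbf{d},\mathbf{j}} := \bigl\{\vc\in\R^{\sum_i d_i} : \text{the monic $p_i$ of degree $d_i$ with coefficients $\vc^{(i)}$ has a $j_i$-th real root $y_i$, with $(y_1,\ldots,y_m)\in A$}\bigr\}.
\]
The encoding $\vx\mapsto(\vc_\vx,\mathbf{j}_\vx)$ embeds $A(\mathbf{d},H)$ injectively into $\bigsqcup_{\mathbf{j}}\widetilde A^{\mathbf{d},\mathbf{j}}(\Q,cH^k)$, and Theorem~\ref{thm:main-F} applied with $\cF=\Q$ to each $\widetilde A^{\mathbf{d},\mathbf{j}}$ would control the transcendental part of these rational points, absorbing the factor $\log(cH^k)=O(\log H)$ into the constants.

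The crucial issue is that the root-extraction map $\rho\colon\widetilde A^{\mathbf{d},\mathbf{j}}\to A$ has real-semialgebraic fibers of dimension $\sum_i(d_i-1)$ over each $\vx$, so for $\mathbf{d}\neq(1,\ldots,1)$ the image $\vc_\vx$ of a transcendental $\vx$ can lie in the algebraic part of $\widetilde A^{\mathbf{d},\mathbf{j}}$ via such a fiber, and Theorem~\ref{thm:main-F}'s transcendental-part bound does not directly apply. A case analysis separates the benign situation, where any containing connected positive-dimensional semialgebraic $S\ni\vc_\vx$ has $\rho(S)\subset A$ positive-dimensional (forcing $\vx\in A^\alg$, a contradiction), from the problematic one, where $S\subset\rho^{-1}(\vx)$ is confined to a fiber.

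The saving observation in the problematic case is that the fibers are $\Q$-thin: any monic rational polynomial of degree exactly $d_i$ vanishing at $x_i$ must coincide with the minimal polynomial, so each fiber contributes at most one rational point, namely $\vc_\vx$ itself. The hard part is to exploit this $\Q$-thinness globally -- perhaps through a definable-family form of Theorem~\ref{thm:main-F} indexed by the fiber structure, or by inductively peeling off the contribution of fiber-like algebraic strata -- to still produce a polylogarithmic bound on the remaining rational-point count. This refinement is where I expect the bulk of the technical work to lie.
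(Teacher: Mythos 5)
Your reduction (stratify by degree pattern, encode each coordinate by the rational coefficient vector of its minimal polynomial, pull $A$ back along the root-extraction branches, and count rational points upstairs) is exactly the skeleton of the paper's proof, and you have correctly located the central obstruction: since every coefficient tuple lies on a positive-dimensional semialgebraic fiber of $\rho$, the set $\widetilde A^{\mathbf{d},\mathbf{j}}$ coincides with its own algebraic part whenever some $d_i\ge 2$, so Theorem~\ref{thm:main-F} applied as a black box yields nothing. But your proposal stops there: the paragraph beginning ``The saving observation\dots'' is a statement of intent, not an argument, and the $\Q$-thinness of fibers does not by itself close the gap --- to count the rational points sitting on ``fiber-like algebraic strata'' you would still need to bound the number of such strata, which is circular if all you have is the counting statement of Theorem~\ref{thm:main-F}.

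The missing ingredient is the stronger \emph{structural} form of the main theorem, Theorem~\ref{thm:Rre-main}: for any definable $B$ (here $B=A_{\vchi,j}$, your $\widetilde A^{\mathbf{d},\mathbf{j}}$) one can cover $B(\Q,H)$ by at most $N(\log H)^\kappa$ sets of the form $B(S_\alpha)$, where the $S_\alpha$ are smooth \emph{connected} semialgebraic strata of controlled complexity and $B(S_\alpha)$ consists of the points where the germ of $S_\alpha$ lies inside $B$. Given this, the dichotomy you sketch becomes a finishing argument rather than an open problem: if $\vx\in A^\trans$ and $p=\vc_\vx$ lies in $B(S_\alpha)$, then the image $\phi_{\vchi,j}(S_{\alpha,p})$ of the germ is a connected semialgebraic subset of $A$ through $\vx$, so it must be a point; by connectedness and analyticity $\phi_{\vchi,j}$ is then constant on all of $S_\alpha$ with value $\vx$, so each stratum determines at most one transcendental point and $\#A^\trans(\vk,H)$ is bounded by the total number of strata, i.e.\ by $N(\log H)^\kappa$. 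So the route is the right one, but the bulk of the work you defer is precisely the content of Theorem~\ref{thm:Rre-main}, not a refinement one can extract from the statement of Theorem~\ref{thm:main-F} alone.
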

Note that in Theorem~\ref{thm:main-k} the exponent $\kappa$ may depend
on the degree $k$.

\subsection{Background}
\label{sec:background}

In \cite{bombieri-pila}, Bombieri and Pila considered the following
problem: let $f:[0,1]\to\R$ be an analytic function and $X\subset\R^2$
its graph. What can be said about the number of integer points in $X$,
and more generally in the homothetic dilation $tX$? They showed that
if $f$ is transcendental then for every $\e>0$ there
exists a constant $c(f,\e)$ such that $\#(tX\cap\Z^2)\le c(f,\e)t^\e$
for all $t\ge1$. The condition of transcendence is necessary, as can
be observed by the simple example $f(x)=x^2$ satisfying
$\#(tX\cap\Z^2)\simeq t^{1/2}$. The proof of \cite{bombieri-pila}
introduced a new method of counting integer points using certain
interpolation determinants.

In \cite{pila:density-Q} Pila extended the method of
\cite{bombieri-pila} to the problem of counting rational points on
$X$. In particular, he proved that if $f$ is transcendental then for
every $\e>0$ there exists a constant $c(f,\e)$ such that
$\#X(\Q,H)\le c(f,\e)H^\e$ for all $H\in\N$. In this generality, the
asymptotic $O(H^\e)$ is essentially the best possible, as illustrated
by \cite[Example~7.5]{pila:subanalytic-dilation}.

Moving beyond the case of curves one encounters a new phenomenon: a
set $X$ may be transcendental while still containing algebraic curves,
and in such a case (as illustrated by the graph of $x\to x^2$) one
cannot expect the asymptotic $\#X(\Q,H)=O(H^\e)$. However, in
\cite[Theorem~1.1]{pila:subanalytic} Pila showed that for compact
subanalytic surfaces this is the only obstruction. More precisely, for
any compact subanalytic surface $X\subset\R^n$ and $\e>0$ there exists
a constant $C(X,\e)$ such that $\#X^\trans(\Q,H)\le C(X,\e)H^\e$. The
same result for arbitrary compact subanalytic sets was conjectured in
\cite[Conjecture~1.2]{pila:subanalytic-dilation}. In
\cite[Theorem~1.8]{pila-wilkie} Pila and Wilkie proved this conjecture
in a considerably more general setting. Namely, they showed that if
the set $X$ is definable in \emph{any} O-minimal structure and $\e>0$
then there exists a constant $C(X,\e)$ such that
$\#X^\trans(\Q,H)\le C(X,\e)H^\e$. This result contains in particular
the case of compact subanalytic sets (and more generally globally
subanalytic sets), obtained for the O-minimal structure of restricted
analytic functions $\R_{\mathrm {an}}$. It also contains much wider
classes of definable sets, for instance those definable in the
structure $\R_{{\mathrm{an}},\exp}$ obtained by adjoining the graph of
the \emph{unrestricted} exponential function to $\R_{\mathrm{an}}$.
The Pila-Wilkie theorem in this generality turned out to have many
important diophantine applications (see e.g. \cite{scanlon:survey} for
a survey).

As mentioned earlier, the asymptotic $O(H^\e)$ is essentially the best
possible if one allows arbitrary subanalytic sets (even analytic
curves). However, one may hope that in more tame geometric contexts
much better estimates can be obtained. In
\cite[Conjecture~1.11]{pila-wilkie} Wilkie conjectured that if $X$ is
definable in $\R_{\exp}$, i.e. using the unrestricted exponential but
without allowing arbitrary restricted analytic functions, then there
exist constants $N(X)$ and $\kappa(X)$ such that
\begin{equation}
  \#X^\trans(\Q,H)\le N(X)\cdot(\log H)^{\kappa(X)}.
\end{equation}
In \cite[Conjectures~1.4~and~1.5]{pila:exp-alg-surface} Pila proposed two
generalizations of this conjecture: namely, that for an arbitrary
number field $\cF\subset\R$ one has
\begin{equation}
  \#X^\trans(\cF,H)\le N(X,\cF)\cdot(\log H)^{\kappa(X)}
\end{equation}
where only the constant $N(X,\cF)$ is allowed to depend on $\cF$, and
for $k\in\N$ one has
\begin{equation}
  \#X^\trans(k,H)\le N(X,k)\cdot(\log H)^{\kappa(X,k)}
\end{equation}
where both constants are allowed to depend on $k$.

Some low-dimensional cases of the Wilkie conjecture have been
established. In \cite[Theorem~1.3]{pila:pfaff} Pila proved the analog
of the Wilkie conjecture for graphs of Pfaffian functions
(see~\secref{sec:pfaffian} for the definition) or plane curves defined
by the vanishing of a Pfaffian function. In
\cite[Corollary~5.5]{jones-thomas} Jones and Thomas have shown that
the analog of the Wilkie conjecture holds for surfaces definable in
the structure of restricted Pfaffian functions. In
\cite{pila:exp-alg-surface,butler} the Wilkie conjecture is confirmed
for some special surfaces defined using the \emph{unrestricted}
exponential.

\subsection{Overview of the proof}

\subsubsection{The Pfaffian category}
Our approach is based on an interplay between ideas of complex
analytic geometry, and the theory of Pfaffian function. We briefly
pause to comment on the latter. In \cite{khovanskii:fewnomials}
Khovanskii introduced the class of Pfaffian functions, defined as
functions satisfying a type of triangular system of polynomial
differential equations (see~\secref{sec:pfaffian} for details). The
Pfaffian functions enjoy good finiteness properties, and have played a
fundamental role in Wilkie's work on the model-completeness of
$\R_{\exp}$ \cite{wilkie:Rexp}.

From the Pfaffian functions one can form the class of semi-Pfaffian
sets, i.e. sets defined by a boolean combination of Pfaffian
equalities and inequalities, and sub-Pfaffian sets, i.e. projections
of semi-Pfaffian sets. Pfaffian functions have a natural notion of
degree, and by works of Khovanskii \cite{khovanskii:fewnomials} and
Gabrielov and Vorobjov \cite{gv:complexity,gv:compact-approx} the
number of connected components of any semi- or sub-Pfaffian set can be
explicitly estimated from above in terms of the degrees of the
Pfaffian functions involved (see
Theorem~\ref{thm:pfaffian-complexity}). Moreover, for us it important
that these estimates are \emph{polynomial} in the degrees.

\subsubsection{The holomorphic-Pfaffian category}
The theory of Pfaffian functions is an essentially real theory, based
on topological ideas going back to the classical Rolle theorem. The
holomorphic continuation of a real Pfaffian function on $\R^n$ is not,
in general, a Pfaffian function on $\C^n\simeq\R^{2n}$. However, since
our arguments are complex-analytic in nature we restrict attention to
\emph{holomorphic-Pfaffian} functions: holomorphic functions whose
graphs (in an appropriate domain) are sub-Pfaffian sets. It is a small
miracle that the graph of the complex exponential $e^z$, and hence
also of $\sin z$, is indeed a Pfaffian set when restricted to a strip.
A similar feature is used in an essential way in the work of van den
Dries \cite{vdd:Rre}, which we discuss below.

\subsubsection{The inductive scheme for counting rational points}
Fix some domain $\Omega\subset\C^n$. We begin by explaining our
estimate for $\#X(\Q,H)$ when $X\subset\C^n$ is a
\emph{holomorphic-Pfaffian variety}, i.e. a set cut out by
holomorphic-Pfaffian equations of Pfaffian degree $\beta$. For
simplicity we assume $X=X^\trans$. Our basic strategy is similar to
the strategy used by Pila and Wilkie \cite{pila-wilkie}: we seek to
cover $X$ by smaller pieces $X_k$, such that for each piece one can
find an algebraic hypersurface $H_k$ with
$X_k(\Q,H)\subset X_k\cap H_k$.

By way of comparison, in \cite{pila-wilkie} the subdivision is
performed in two steps. One first applies a reparametrization theorem
to write $X$ as the union of images of $C^r$-smooth maps
$\phi_j:(0,1)^{\dim X}\to X$ with unit norms: this step is independent
of $H$. One then subdivides each cube $(0,1)^{\dim X}$ into $H^\e$
subcubes, and for each subcube constructs the hypersurface as above
using a generalization of the Bombieri-Pila method
\cite{bombieri-pila}. Crucially for \cite{pila-wilkie}, the degrees of
the these hypersurfaces can be chosen to depend only on $\e$ but not
on $H$. However, to go beyond the asymptotic $O(H^\e)$ it appears that
one must allow the degrees to depend on $H$.

In our approach $X$ is covered by $\poly(\beta)$ pieces
$X_k:=X\cap\Delta_k$ where $\Delta_k$ is a \emph{Weierstrass polydisc}
for $X$, a notion introduced below (more accurately we take $\Delta_k$
to be a Weierstrass polydisc shrunk by a factor of two). We then
construct a hypersurface $H_k$ of degree $\poly(\beta,\log H)$
containing $X_k(\Q,H)$. Consequently we replace $X$ by
$X\cap(\cup_k H_k)$, which is guaranteed to have strictly smaller
dimension and degree polynomial in $\beta$ and $\log H$. One can then
finish the proof by induction on dimension, and eventually obtain a
zero-dimensional holomorphic-Pfaffian variety defined by equations of
degree $\poly(\log H)$ and hence having at most $\poly(\log H)$
points. We proceed to explain the subdivision step and the
construction of the algebraic hypersurfaces.

\subsubsection{Weierstrass polydiscs and holomorphic decompositions}
We define a \emph{Weierstrass polydisc}
$\Delta=\Delta_z\times\Delta_w$ for $X$ to be a polydisc in some
coordinate system, where $\dim X=\dim\Delta_z$ and
$X\cap(\Delta_z\times\partial\Delta_w)=\emptyset$. It follows from
this definition that the projection from $X\cap\Delta$ to $\Delta_z$
is a finite (ramified) covering map, and all fibers have the same
number of points (counted with multiplicities). We denote this number
by $e(X,\Delta)$ and call it the \emph{multiplicity} of $\Delta$.
Weierstrass polydiscs are ubiquitous in complex analytic geometry: they
are the basic sets where a complex analytic variety can be expressed
as a finite cover of a polydisc.

By a variant of Weierstrass division we prove the following polynomial
interpolation result: for any holomorphic function $f$ on a
neighborhood of $\Delta$ there is function $P$ on $\Delta$,
holomorphic in the $z$-variables and polynomial of degree at most
$e(X,\Delta)$ in each of the $w$-variables, such that $f\equiv P$ on
$X\cap\Delta$ (see Proposition~\ref{prop:weierstrass-interpolation}).
Moreover, the norm of $P$ can be estimated in terms of the norm of
$f$. The existence of such a decomposition immediately implies that
$\Delta$ is the domain of a \emph{decomposition datum} in the sense of
\cite{me:interpolation} (see Definition~\ref{def:decomposition}). Then
the results of \cite{me:interpolation}, themselves a complex-analytic
analog of the Bombieri-Pila interpolation determinant method
\cite{bombieri-pila}, imply that $(X\cap\Delta)(\Q,H)$ is contained in
an algebraic hypersurface of degree $d=\poly(e(X,\Delta),\log H)$ (for
a precise statement see Proposition~\ref{prop:hypersurface-select}).
It will therefore suffice to cover $X$ by $\poly(\beta)$ Weierstrass
polydiscs $\Delta$ each satisfying $e(X,\Delta)=\poly(\beta)$.

\subsubsection{Covering by Weierstrass polydiscs}
The multiplicity $e(X,\Delta)$ is relatively easy to estimate using
Pfaffian methods, being the number of isolated solutions of a system
of Pfaffian equations and inequalities. The heart of the argument is
therefore the covering by Weierstrass polydiscs. For this purpose we
prove the following, somewhat stronger statement (see
Theorem~\ref{thm:pfaff-polydisc}): if $B\subset\Omega$ is ball of
radius $r$ around a point $p\in\Omega$, then there is a Weierstrass
polydisc $\Delta\subset B$ for $X$ with center $p$ and polyradius at
least $r/\poly(\beta)$. In other words, every point $p$ is the center
of a relatively large Weierstrass polydisc. From this it is easy to
deduce that $X$ can be covered by $\poly(\beta)$ Weierstrass polydiscs.

We briefly comment on the proof of Theorem~\ref{thm:pfaff-polydisc}.
Suppose first that $X$ has complex codimension $1$. In this case we
show, by a simple geometric argument, that the theorem can be reduced
to finding a ball of radius $r/\poly(\beta)$ disjoint from
$S^1\cdot X$, where $S^1=\{|\zeta|=1\}$ acts by scalar multiplication
on $\C^n$. The set $S^1\cdot X$ is also sub-Pfaffian, now of real
codimension $1$. We use an argument involving metric entropy,
specifically Vitushkin's formula (in the form given by Friedland and
Yomdin \cite{yomdin-friedland}) to show that $S^1\cdot X$ can be
covered by relatively few balls of radius $r/\poly(\beta)$, and
elementary considerations then show that it must be disjoint from one
(in fact, many) such ball. For $X$ of arbitrary codimension we use an
induction on codimension by repeated projections.

\subsubsection{From $\R^\RE$-definable sets to holomorphic-Pfaffian varieties}
At this point our review of the proof for holomorphic-Pfaffian $X$ is
essentially complete. We now briefly discuss the case of a general
$\R^\RE$-definable set $A$. Let $I=[-1,1]$, and assume that
$A\subset I^m$ (the general case is easily reduced to this one). Our
approach for this case is based on a quantifier-elimination result of
van den Dries \cite{vdd:Rre} (itself a variant of the work of Denef
and van den Dries \cite{denef-vdd} on subanalytic sets). In
\cite{vdd:Rre} it is shown, up to some minor variations in
formulation, that any $A$ as above is definable by a quantifier-free
formula in a language $L^D_\RE$ which has a natural interpretation in
the structure $I$. This language has an order relation $<$, $m$-ary
operation symbols for certain special functions $f:I^m\to I$, and a
binary operation $D$ called restricted division, interpreted in $I$ as
\begin{equation}
  D(x,y) =
  \begin{cases}
    x/y & |x|\le|y| \text{ and } y\neq0 \\
    0 & \text{otherwise.}
  \end{cases}
\end{equation}
The crucial feature for us is that all functions appearing in the
language extend as holomorphic-Pfaffian functions to a complex
neighborhood of $I^m$. Therefore, a set defined by quantifier-free
$L_\RE$-formulas, i.e. not involving the restricted division $D$, is
essentially the real part of a holomorphic-Pfaffian variety (after
some work to handle inequalities).

To handle a formula involving a restricted division $D(x,y)$, we
replace it by three formulas: one for the case $|x|>|y|$ or $y=0$
where we replace $D(x,y)$ by $0$; one for the case $|x|=|y|\neq0$
where we replace $D(x,y)$ by $1$; and one for this case $|x|<|y|$,
where we replace $D(x,y)$ by a new variable $z$, and add the equation
$zy=x$ (which is equivalent to $z=D(x,y)$ when $x<y$). Repeating this
for every restricted division in the formula, we reduce the set $A$ to
a union of projections (forgetting the variables $z$) of sets $B_j$
definable by quantifier-free $L_\RE$-formulas. Moreover, each fiber of
the projection $\pi:B_j\to A$ contains at most one point: this
corresponds to the fact that we only add a variable $z$ under the
restriction that $|x|<|y|$ and in particular $y\neq0$, and under these
conditions the equation $yz=x$ uniquely defines $z$. We call this type
of projections \emph{admissible} (see
Definition~\ref{def:admissible}).

It remains to study rational points in sets of the form $\pi(B)$,
where $B$ is defined by a quantifier-free $L_\RE$-formula and $\pi$ is
an admissible projection. Up to some minor details involving the
algebraic part of $B$, we may replace $B$ by its complex-analytic germ
-- which is a holomorphic-Pfaffian variety. The strategy described
above for holomorphic-Pfaffian varieties extends in a straightforward
manner to their admissible projections. The relevant statement is
Theorem~\ref{thm:exploring-projection}.

\subsubsection{From rational points to $\cF$-points and points of degree $k$}

The proof can be carried out for $\cF$-points rather than $\Q$-points
in exactly the same manner (we follow the strategy of Pila
\cite[Theorem~3.2]{pila:exp-alg-surface}). The case of algebraic
points of degree $k$ requires some additional work. We essentially
follow the proof of \cite{pila:algebraic-points}, with some minor
additional details needed to obtain the necessary degree estimates.

\subsection{Contents of this paper}

This paper is organized as follows. In~\secref{sec:weierstrass} we
prove some preliminary results of polynomial interpolation in the
complex setting; define the notion of a Weierstrass polydisc; and
establish a result on holomorphic decompositions of functions over a
Weierstrass polydisc. In~\secref{sec:interpolation-determinants} we
give upper and lower bounds for interpolation determinants over a
fixed Weierstrass polydisc, in analogy with the Bombieri-Pila
determinant method. In~\secref{sec:entropy} we recall the notion of
$\e$-entropy and Vitushkin's bound and derive some simple consequences
that are needed in the sequel. In~\secref{sec:pfaffian} we recall the
Pfaffian, semi-Pfaffian and sub-Pfaffian categories; introduce the
holomorphic-Pfaffian category; and prove the key technical result on
covering of holomorphic-Pfaffian varieties by Weierstrass polydiscs.
In~\secref{sec:exploring} we prove an analog of the Wilkie conjecture
for holomorphic-Pfaffian varieties and their projections by an
induction over dimension. In~\secref{sec:definable} we generalize the
results of~\secref{sec:exploring} to arbitrary $\R^\RE$-definable sets
and prove the main Theorems~\ref{thm:main-F} and~\ref{thm:main-k}.
Finally in~\secref{sec:concluding} we give some concluding remarks
related to effectivity and uniformity of the bounds; and discuss
possible generalizations to other structures.

\section{Polynomial interpolation, Weierstrass polydiscs and
  holomorphic decomposition}
\label{sec:weierstrass}

We fix some basic notations. Let $\Omega\subset\C^n$ be a domain and
$Z\subset\Omega$. We denote by $\cO(Z)$ the ring of germs of
holomorphic functions in a neighborhood of $Z$. If $Z$ is relatively
compact in $\Omega$ we denote by $\norm{\cdot}_Z$ the maximum norm on
$\cO(\bar Z)$.

Let $A\subset\C^n$ be a ball or a polydisc around a point $p\in\C^n$
and $\delta>0$. We let $A^\delta$ denote the $\delta^{-1}$-rescaling
of $A$ around $p$, i.e. $A^\delta:=p+\delta^{-1}(A-p)$.

\subsection{Polynomial interpolation}

\subsubsection{Univariate interpolation}

Let $\Omega\subset\C$ be a domain and $f:\Omega\to\C$ a function. Let
$\va:=\{\va_1,\ldots,\va_k\}\subset\Omega$ be a multiset of points and
denote by $\nu(\va_j)$ the number of times $\va_j$ appears in $\va$.
We define the \emph{interpolation polynomial} $L[\va;f]$ to be the
unique polynomial of degree at most $k-1$ satisfying
\begin{equation}
  L[\va;f]^{(l)}(\va_j) = f(\va_j) \qquad j=1,\ldots,k, \quad l=0,\ldots,\nu(\va_j)-1.
\end{equation}
Denote $h_\va(z):=\prod_{j=1}^k(z-\va_j)$. When $f$ is a holomorphic
function, the classical proof of the Weierstrass division theorem (see
e.g. \cite{gr:analytic}) shows that $L[\va;f]$ admits an integral
representation as follows.

\begin{Prop}\label{prop:interpolation-formula}
  Let $\Omega\subset\C^n$ be a simply-connected domain and
  $\va\subset\Omega$. Let $f\in\cO(\bar\Omega)$. Then for $z\in\Omega$,
  \begin{equation}\label{eq:L-integral}
    L[\va;f](z) = \frac1{2\pi i}\oint_{\partial\Omega} \frac{f(\zeta)}{h(\zeta)}
    \frac{h(\zeta)-h(z)}{\zeta-z} \d\zeta, \qquad h:=h_\va.
  \end{equation}
\end{Prop}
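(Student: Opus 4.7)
The plan is to verify the stated integral representation directly by decomposing the integrand into a Cauchy kernel plus a holomorphic correction, and then checking that the resulting function is (a) a polynomial in $z$ of degree at most $k-1$, and (b) interpolates $f$ to the required order at each node; uniqueness of Hermite interpolation then forces it to coincide with $L[\va;f]$.

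First I would observe that $h(\zeta)-h(z)$ vanishes at $\zeta=z$, so the factor $K(\zeta,z) := \frac{h(\zeta)-h(z)}{\zeta-z}$ is a polynomial in the two variables. In particular, for fixed $\zeta$, $K(\zeta,z)$ is a polynomial in $z$ of degree $k-1$. Therefore the right-hand side of \eqref{eq:L-integral}, as a function of $z$, is a polynomial of degree at most $k-1$, matching the degree constraint on $L[\va;f]$.

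Next, I would rewrite the integrand using the algebraic identity
\begin{equation*}
  \frac{f(\zeta)}{h(\zeta)}\,\frac{h(\zeta)-h(z)}{\zeta-z} \;=\; \frac{f(\zeta)}{\zeta-z} \;-\; h(z)\cdot\frac{f(\zeta)}{h(\zeta)(\zeta-z)}.
\end{equation*}
For $z\in\Omega$, integrating the first term around $\partial\Omega$ gives $f(z)$ by Cauchy's formula. The second term is $h(z)\cdot g(z)$, where
\begin{equation*}
  g(z) \;:=\; \frac{1}{2\pi i}\oint_{\partial\Omega} \frac{f(\zeta)}{h(\zeta)(\zeta-z)}\,\d\zeta
\end{equation*}
is holomorphic in $z$ throughout $\Omega$ (the integrand is continuous on $\partial\Omega$ since the nodes $\va_j$ lie in the interior, and $z\in\Omega$ is bounded away from $\partial\Omega$). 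Consequently the right-hand side of \eqref{eq:L-integral} equals $f(z)-h(z)g(z)$.

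Finally, since $h$ vanishes to order exactly $\nu(\va_j)$ at each $\va_j$ and $g$ is holomorphic, the product $h\cdot g$ vanishes to order at least $\nu(\va_j)$ at $\va_j$; hence its first $\nu(\va_j)-1$ derivatives vanish there, so the right-hand side of \eqref{eq:L-integral} agrees with $f$ to the required order at each node. Combined with the degree bound, this identifies it with $L[\va;f]$ by uniqueness of Hermite interpolation. There is no real obstacle here; the only point requiring care is the algebraic fact that $K(\zeta,z)$ is polynomial of the right degree in $z$, which is what makes the otherwise-routine Cauchy calculation yield a polynomial rather than a generic holomorphic function.
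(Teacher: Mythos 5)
Your proof is correct and follows essentially the same route as the paper: both rest on the observation that the kernel $(h(\zeta)-h(z))/(\zeta-z)$ is a polynomial of degree $k-1$ in $z$, and then verify the interpolation conditions via Cauchy's formula. Your explicit decomposition of the right-hand side as $f(z)-h(z)g(z)$ with $g$ holomorphic is a cleaner way to check the derivative conditions at nodes of multiplicity $\nu(\va_j)>1$, a point the paper's one-line verification (which only evaluates at $z=\va_j$ and invokes Cauchy's formula for $f(\va_j)$) leaves implicit.
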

\begin{proof}
  The right hand side of~\eqref{eq:L-integral} is easily seen to be a
  polynomial in of degree $k-1$ in $z$ (since this is true for the
  integrand). Evaluating at $z=\va_j$ we have $h(z)=0$, and the integral
  reduces to the Cauchy formula for $f(\va_j)$.
\end{proof}

Next we give norm estimates for $L[\va;f]$ in terms of the norm of
$f$.
\begin{Prop}\label{prop:interpolation-bound}
  Let $D\subset\C$ be a disc, $\va\subset D$ a multiset and and
  $f\in\cO(\bar D^{1/3})$. Then
  \begin{equation}
    \norm{L[\va;f]}_D \le 3\norm{f}_{D^{1/3}}.
  \end{equation}
\end{Prop}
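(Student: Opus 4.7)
The plan is to apply the integral representation~\eqref{eq:L-integral} of Proposition~\ref{prop:interpolation-formula} with $\Omega=D^{1/3}$, and extract the bound by a one-line contour estimate. Write $p$ for the center of $D$ and $r$ for its radius, so $D^{1/3}$ is the concentric disc of radius $3r$ and the contour $\partial D^{1/3}$ has length $6\pi r$.

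The relevant metric inequalities for $z\in D$, $\zeta\in\partial D^{1/3}$, and each node $\va_j\in D$ are immediate: $|\zeta-z|\ge 2r$, $|\zeta-\va_j|\ge 2r$, and $|z-\va_j|\le 2r$. Taking products over $j$ gives $|h(\zeta)|\ge(2r)^k\ge|h(z)|$, whence
\begin{equation*}
  \left|\frac{h(\zeta)-h(z)}{h(\zeta)}\right|\le 1+\frac{|h(z)|}{|h(\zeta)|}\le 2.
\end{equation*}
Substituting into~\eqref{eq:L-integral} and applying the standard contour bound,
\begin{equation*}
  |L[\va;f](z)|\le \frac{1}{2\pi}\cdot 6\pi r\cdot \norm{f}_{D^{1/3}}\cdot \frac{2}{2r}=3\,\norm{f}_{D^{1/3}},
\end{equation*}
uniformly for $z\in D$, which is the claim.

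There is no real obstacle here; the one point worth flagging is that one must \emph{not} expand the Newton-style factorization $(h(\zeta)-h(z))/(\zeta-z)=\sum_{i=1}^{k}\prod_{j<i}(\zeta-\va_j)\prod_{j>i}(z-\va_j)$ and bound term by term, since after dividing by $|h(\zeta)|\ge(2r)^k$ the resulting estimate grows like $2^k$. The factor $3$ in the definition of $D^{1/3}$ was chosen precisely so that $|h(z)|\le|h(\zeta)|$ on the contour, letting us replace that telescoping sum by a plain triangle inequality on $h(\zeta)-h(z)$ and obtain a constant independent of the cardinality of $\va$.
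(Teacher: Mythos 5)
Your proof is correct and is essentially the paper's own argument: the paper normalizes $D$ to the unit disc and then uses exactly the same three estimates ($|h(z)|\le 2^k$, $|h(\zeta)|\ge(3-1)^k=2^k$ on $\partial D^{1/3}$, and the triangle bound $|1-h(z)/h(\zeta)|\le 2$) together with the standard contour estimate to get the factor $3\cdot\tfrac{1}{3-1}\cdot 2=3$. Your version merely keeps the radius $r$ explicit instead of rescaling, which changes nothing.
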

\begin{proof}
  Since the claim is invariant under affine transformations of $\C$ we
  may assume that $D$ is the unit disc. Then we have for any $z\in D$
  an estimate $|h(z)|\le 2^k$ and for any $\zeta\in \partial D^{1/3}$ an
  estimate $|h(\zeta)|\ge(3-1)^k=2^k$. Using the integral
  representation~\eqref{eq:L-integral},
  \begin{multline}
    \norm{L[\va;f]}_D = \max_{z\in D} \abs{\frac1{2\pi
      i}\oint_{\partial D^{1/3}} \frac{f(\zeta)}{\zeta-z}
    \left(1-\frac{h(z)}{h(\zeta)}\right) \d\zeta} \\
    \le 3 \frac{\norm{f}_{D^{1/3}}}{3-1} (1+1) \le 3\norm{f}_{D^{1/3}}.
  \end{multline}
\end{proof}

\subsection{Weierstrass polydiscs}

We say that $\vx=(\vx_1,\ldots,\vx_n)$ is a \emph{standard} coordinate
system on $\C^n$ if it is obtained from the standard coordinates by an
affine unitary transformation.

Let $\Omega\subset\C^n$ be a domain and $X\subset\Omega$ an analytic
subset.
\begin{Def}
  We say that a polydisc $\Delta=\Delta_z\times\Delta_w$ in the
  $\vx=\vz\times\vw$ coordinates is a \emph{pre-Weierstrass polydisc}
  for $X$ if $\bar\Delta\subset\Omega$ and
  $(\bar\Delta_z\times\partial\Delta_w)\cap X=\emptyset$. We call
  $\Delta_z$ the base and $\Delta_w$ the fiber of $\Delta$.

  If $X$ is pure-dimensional, we say that $\Delta$ is a
  \emph{Weierstrass polydisc} for $X$ if $\dim\vz=\dim X$.
\end{Def}
When speaking about (pre-)Weierstrass polydiscs we will assume (unless
otherwise stated) that the coordinates are given by
$\vx=\vz\times\vw$.  We will also denote by
$\pi_z:\C^n\to\C^{\dim\vz}$ the projection to the $\vz$-coordinates and 
by $\pi_z^X$ its restriction to $\Delta\cap X$.

We recall some standard facts.

\begin{Fact}\label{fact:weierstrass-proper}
  If $\Delta$ is a pre-Weierstrass polydisc for $X$ then $\pi_z^X$ is
  proper and finite-to-one.
\end{Fact}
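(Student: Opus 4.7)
The plan is to extract both properness and finite fibers from the single hypothesis that $X$ avoids the ``side'' $\bar\Delta_z\times\partial\Delta_w$ of the closed polydisc. The key observation is that this hypothesis forces $X\cap\bar\Delta\subset\bar\Delta_z\times\Delta_w$: any point of $X\cap\bar\Delta$ whose $\vw$-coordinate lies in $\partial\Delta_w$ would lie in $\bar\Delta_z\times\partial\Delta_w$, contradicting the assumption. This $\vw$-interior confinement is what converts the open polydisc $\Delta_w$ into a ``trap'' where $X\cap\bar\Delta$ stays bounded away from the $\vw$-boundary.

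For properness, I would take a compact set $K\subset\Delta_z$ and identify the preimage
\begin{equation*}
  (\pi_z^X)^{-1}(K) = (K\times\Delta_w)\cap X = (K\times\bar\Delta_w)\cap X,
\end{equation*}
where the second equality uses the observation above applied to points with $\vz$-coordinate in $K\subset\bar\Delta_z$. The right-hand side is the intersection of the compact set $K\times\bar\Delta_w$ with the relatively closed set $X\cap\bar\Delta\subset\bar\Delta$, hence compact. This proves $\pi_z^X$ is proper.

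For finite-to-one, I would specialize the previous argument to $K=\{z_0\}$ for an arbitrary $z_0\in\Delta_z$. The fiber $(\pi_z^X)^{-1}(z_0)$ is then a compact analytic subset of $\{z_0\}\times\Delta_w$, which is biholomorphic to the bounded polydisc $\Delta_w\subset\C^{\dim\vw}$. Since each $\vw$-coordinate function is holomorphic and bounded on $\Delta_w$, the maximum principle forces it to be locally constant on the fiber; thus the fiber is discrete, and being also compact, it is finite.

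The entire argument is standard and routine, so there is no genuine obstacle; the only point to be careful about is the mild topological subtlety that the target $\Delta_z$ is open, so properness is to be interpreted using compact subsets of $\Delta_z$, and one must verify that $X\cap\bar\Delta$ is closed in $\bar\Delta$ (which it is, being an analytic subset of the ambient open set $\Omega\supset\bar\Delta$).
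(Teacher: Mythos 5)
Your proof is correct and follows essentially the same route as the paper's: both arguments hinge on the observation that the pre-Weierstrass condition keeps $X\cap\bar\Delta$ away from $\bar\Delta_z\times\partial\Delta_w$, yielding properness (the paper phrases this via sequences escaping to infinity, you via preimages of compacta being closed subsets of compacta), and both deduce finiteness of fibers from the fact that a compact analytic subset of the polydisc $\Delta_w$ must be finite.
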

\begin{proof}
  Let $p_i\in\Delta\cap X$ be a sequence of points that escapes to
  infinity and we will show that $\pi_z^X(p_i)$ escapes to infinity in
  $\Delta_z$. Assume otherwise. Passing to a subsequence we may assume
  that $\pi_z^X(p_i)$ converges in $\Delta_z$, and passing to a
  further subsequence we may also assume that $p_i$ converges in
  $\bar\Delta$. But then it must necessarily converge to a point in
  $(\Delta_z\times\partial\Delta_w)\cap X$, which is ruled out by the
  definition of a pre-Weierstrass polydisc. Thus $\pi_z^X$ is proper,
  hence its fibers are compact complex submanifolds of $\Delta_w$, and
  must therefore be finite.
\end{proof}

\begin{Fact}\label{fact:weierstrass-mu-to-1}
  If $X$ has pure dimension $m$ and $\Delta$ is a Weierstrass polydisc
  for $X$ then $\pi_z^X$ is $e(X,\Delta)$-to-1 for some number
  $e(X,\Delta)\in\N$ (where points in the fiber are counted with
  multiplicities).
\end{Fact}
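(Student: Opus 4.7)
The plan is to combine Fact~\ref{fact:weierstrass-proper} with standard properties of finite holomorphic maps between equidimensional complex spaces. By Fact~\ref{fact:weierstrass-proper}, $\pi_z^X\colon X\cap\Delta\to\Delta_z$ is proper and finite-to-one. Since $X$ is pure of dimension $m=\dim\vz$ and $\Delta_z$ is a connected complex manifold of the same dimension, Remmert's proper mapping theorem shows that the image $\pi_z^X(X\cap\Delta)$ is an analytic subset of $\Delta_z$ of pure dimension $m$, hence open; being also closed by properness, it must equal all of $\Delta_z$. Thus $\pi_z^X$ is surjective.

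Next I would introduce the critical locus $R\subset\Delta_z$, defined as the image under $\pi_z^X$ of the singular locus of $X\cap\Delta$ together with the ramification locus of $\pi_z^X$ on the regular part. Since $\pi_z^X$ is proper, $R$ is a proper analytic subset of $\Delta_z$, and so $\Delta_z\setminus R$ is connected. Over this complement $\pi_z^X$ restricts to an unramified finite-sheeted holomorphic covering of a connected base, so its number of sheets is a well-defined constant $e\in\N$; this will be the value $e(X,\Delta)$.

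It remains to verify that fibers above points of $R$ also sum to $e$ when counted with multiplicities. I would define $\mult_p(\pi_z^X):=\dim_\C\cO_{X\cap\Delta,p}/(\pi_z^X)^*\mathfrak{m}_{\pi_z^X(p)}$ for $p\in X\cap\Delta$. For any $z_0\in R$ with fiber $\{p_1,\ldots,p_k\}$, properness and finiteness of $\pi_z^X$ allow me to choose pairwise disjoint open neighborhoods $U_j\ni p_j$ and a polydisc neighborhood $V\ni z_0$, so small that $(\pi_z^X)^{-1}(V)\cap X=\bigsqcup_j(X\cap U_j)$. For a generic $z'\in V\setminus R$, the local multiplicity $\mult_{p_j}(\pi_z^X)$ equals the number of preimages of $z'$ lying in $X\cap U_j$; summing over $j$ gives $\sum_j\mult_{p_j}(\pi_z^X)=e$, as required.

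The step I expect to be the main obstacle is this last identification — the equality between the algebraic multiplicity $\mult_p$ and the generic fiber count in a small neighborhood of $p$. This is a Weierstrass-preparation-type statement for finite holomorphic maps from a pure-dimensional analytic germ to a smooth germ of the same dimension, and it is precisely where the hypothesis that $X$ is pure-dimensional enters; the rest of the argument is a soft assembly from Remmert's theorem and the fact that an unramified cover of a connected base has constant sheet number.
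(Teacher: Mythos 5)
Your overall route is the same as the paper's: the paper disposes of this Fact by citing the standard local representation of a pure-dimensional analytic set as a finite branched cover (Gunning--Rossi, III.B), and what you have written is essentially a proof sketch of that cited result --- properness and finiteness from Fact~\ref{fact:weierstrass-proper}, surjectivity via Remmert's proper mapping theorem, an unramified finite covering of the connected complement of a proper analytic subset $R$, and a local continuity argument at the branch points. That skeleton is sound.

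The one step that fails as stated is the definition you chose for the multiplicity. The equality $\dim_\C\cO_{X\cap\Delta,p}/(\pi_z^X)^*\mathfrak{m}_{\pi_z^X(p)}=\#\{\text{preimages of a generic nearby }z'\text{ in }X\cap U_j\}$ holds when $\cO_{X\cap\Delta,p}$ is Cohen--Macaulay (equivalently, when the finite map onto the smooth equidimensional base is flat at $p$), but for a general reduced pure-dimensional $X$ the algebraic length only bounds the geometric count from above, and the inequality can be strict. So with your definition the fibers over $R$ need not sum to $e$, and the ``Weierstrass-preparation-type statement'' you hoped for is not available in this generality. The repair is simply to take the geometric local degree as the definition: $\mult_p(\pi_z^X):=$ the number of preimages of a generic nearby $z'$ lying in $X\cap U_j$, which is well defined and independent of $z'$ because $(X\cap U_j)\setminus(\pi_z^X)^{-1}(R)\to V\setminus R$ is again a finite unramified covering of a connected base. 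With that convention your final displayed sum is immediate from the disjoint decomposition $(\pi_z^X)^{-1}(V)=\bigsqcup_j(X\cap U_j)$, and this geometric notion is the one the paper actually uses downstream --- in Lemma~\ref{lem:weierstrass-branches} the multiplicity of a point of the fiber is the number of branches $W_j$ passing through it, i.e.\ the multiplicity of the corresponding root of $P_l$, not an algebraic length.
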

\begin{proof}
  In the Weierstrass case $\dim X=\dim\vz$ so
  $\dim X=\dim \pi^X_z(X)$. Under this condition it is well known that
  the map $\pi_z^X$ is a finite unramified cover outside some proper
  analytic subset $B\subset\Delta_z$ \cite[III.B]{gr:analytic}. Then
  the map is $e(X,\Delta)$-to-1 where $e(X,\Delta)$ is the cardinality
  the fiber over any point in $\Delta_z\setminus B$.
\end{proof}

\begin{Lem}\label{lem:weierstrass-branches}
  Let $X$ have pure dimension $m$ and $\Delta$ be a Weierstrass polydisc
  for $X$. For $l=1,\ldots,\dim\vw$ there exists a monic polynomial
  \begin{equation}
    P_l(z,\vw_l)\in\cO(\Delta_z)[\vw_l], \qquad \deg P_l = e(X,\Delta)
  \end{equation}
  such that for any $z\in\Delta_z$, the roots of $P_l(z,\vw_l)$ are
  precisely the $\vw_l$-coordinates of the points of
  $(\pi_z^X)^{-1}(z)$.
\end{Lem}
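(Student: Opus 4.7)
The plan is to construct $P_l$ as the ``characteristic polynomial'' of the $\vw_l$-coordinate function on the fibers of $\pi_z^X$, and to verify holomorphy via symmetric functions together with Riemann's removable singularity theorem.

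First, by Fact~\ref{fact:weierstrass-mu-to-1} there is a proper analytic subset $B\subset\Delta_z$ such that $\pi_z^X$ restricts to an unramified $e(X,\Delta)$-sheeted covering over $\Delta_z\setminus B$. Over this open set, enumerate the fiber locally as $(\pi_z^X)^{-1}(z)=\{p_1(z),\dots,p_{e(X,\Delta)}(z)\}$ with holomorphic local sections $p_j$, and set
\begin{equation}
    P_l(z,\vw_l) := \prod_{j=1}^{e(X,\Delta)} \bigl(\vw_l - \pi_{\vw_l}(p_j(z))\bigr).
\end{equation}
The coefficients of $P_l$ as a polynomial in $\vw_l$ are the elementary symmetric functions of the $\pi_{\vw_l}(p_j(z))$. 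Although the branches $p_j$ are only locally defined and undergo monodromy around $B$, the symmetric functions are invariant under permutation, so the coefficients descend to well-defined holomorphic functions on $\Delta_z\setminus B$.

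Next I would extend these coefficients across $B$. Since each $p_j(z)\in\Delta_w$, the values $\pi_{\vw_l}(p_j(z))$ lie in the bounded disc $\Delta_{\vw_l}$, so every elementary symmetric function is uniformly bounded on $\Delta_z\setminus B$. Because $B$ is a proper analytic subset of the polydisc $\Delta_z$, Riemann's removable singularity theorem for bounded holomorphic functions (\cite{gr:analytic}) yields unique holomorphic extensions to all of $\Delta_z$. This gives $P_l\in\cO(\Delta_z)[\vw_l]$, monic of degree $e(X,\Delta)$.

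Finally I would check that the defining property of $P_l$ persists at points $z\in B$. For such $z$, take a sequence $z_k\to z$ in $\Delta_z\setminus B$. By properness of $\pi_z^X$ (Fact~\ref{fact:weierstrass-proper}), any accumulation point of the fibers $(\pi_z^X)^{-1}(z_k)$ lies in $(\pi_z^X)^{-1}(z)$, and continuity of the coefficients plus the identity $P_l(z_k,\vw_l)=\prod_j(\vw_l-\pi_{\vw_l}(p_j(z_k)))$ forces the roots of $P_l(z,\cdot)$ (with multiplicity) to match the $\vw_l$-coordinates of the fiber over $z$, once these are counted with the correct multiplicities. The main subtle point is matching multiplicities at ramification points; this is where the definition of $e(X,\Delta)$ as the generic fiber cardinality and the conservation-of-number principle for proper finite maps is used to conclude that $P_l(z,\cdot)$ indeed has degree $e(X,\Delta)$ everywhere, proving the lemma.
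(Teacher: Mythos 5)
Your construction is exactly the one used in the paper: form $P_l(z,\vw_l)=\prod_j(\vw_l-\vw_l(W_j(z)))$ over the branches of $(\pi_z^X)^{-1}$ away from the ramification locus $B$, observe that the coefficients are symmetric hence univalued and holomorphic there, and extend across $B$ by boundedness of the branches (fibers lie in $\Delta_w$) and the Riemann removable singularity theorem. Your additional verification of the root-matching at points of $B$ via continuity and conservation of multiplicity is a sound extra step that the paper leaves implicit; the proof is correct and essentially identical in approach.
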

\begin{proof}
  In the notations of Fact~\ref{fact:weierstrass-mu-to-1} and its
  proof, set $\nu=e(X,\Delta)$ and let
  $W_1,\ldots,W_\nu:\Delta_z\setminus B\to X\cap\Delta$ be the
  (ramified) inverses of $\pi_z^X$. Then
  \begin{equation}
    P_l(z,\vw_l) = \prod_{j=1}^\nu \big(\vw_l-\vw_l(W_j(z))\big)
  \end{equation}
  has univalued coefficients which are holomorphic outside $B$, and
  since $W_1,\ldots,W_\nu$ are bounded near $B$ it follows from the
  Riemann removable singularity theorem that the coefficients extend
  to holomorphic functions in $\Delta_z$.
\end{proof}

\begin{Prop}\label{prop:weierstrass-interpolation}
  Let $X$ have pure dimension $m$. Let $\Delta$ be a Weierstrass polydisc
  for $X$ and set $\nu=e(X,\Delta)$. Let $f\in \cO(\bar\Delta_z\times\bar\Delta_w^{1/3})$. There exists a
  function
  \begin{equation}
    P\in\cO(\bar\Delta_z)[\vw], \qquad \deg_{\vw_i} P\le\nu-1,\quad i=1,n-m
  \end{equation}
  such that
  $P\rest{X\cap\Delta}=f\rest{X\cap\Delta}$, and
  \begin{equation}\label{eq:weierstrass-interpolation}
    \norm{P}_{\Delta} \le 3^{n-m} \norm{f}_{\Delta_z\times\Delta_w^{1/3}}.
  \end{equation}
\end{Prop}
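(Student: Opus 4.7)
The plan is to iterate univariate polynomial interpolation one $\vw$-coordinate at a time, each step using Proposition~\ref{prop:interpolation-formula} with the monic polynomial $P_{k+1}(z,\vw_{k+1})$ from Lemma~\ref{lem:weierstrass-branches} as the interpolation factor. Setting $R_0:=f$, the inductive step is
\begin{equation*}
R_{k+1}(z,\vw) := \frac{1}{2\pi\iu}\oint_{\partial\Delta_{w_{k+1}}^{1/3}} \frac{R_k(z,\vw_1,\ldots,\vw_k,\zeta,\vw_{k+2},\ldots,\vw_{n-m})}{P_{k+1}(z,\zeta)}\cdot\frac{P_{k+1}(z,\zeta)-P_{k+1}(z,\vw_{k+1})}{\zeta-\vw_{k+1}}\,d\zeta,
\end{equation*}
and the claimed function is $P:=R_{n-m}$ after $n-m$ steps.

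At each step I would verify three properties. First, $R_{k+1}$ is holomorphic in $z$ and polynomial of degree $\le\nu-1$ in $\vw_{k+1}$, while the polynomial-of-degree-$\le\nu-1$ structure already achieved in $\vw_1,\ldots,\vw_k$ is preserved; this is immediate from inspection of the integrand, since $P_{k+1}(z,\zeta)$ does not vanish on the contour (its roots lie in $\Delta_{w_{k+1}}\subset\Delta_{w_{k+1}}^{1/3}$), and $[P_{k+1}(z,\zeta)-P_{k+1}(z,\vw_{k+1})]/(\zeta-\vw_{k+1})$ is polynomial of degree $\nu-1$ in $\vw_{k+1}$. Second, $R_{k+1}$ agrees with $R_k$ on $X\cap\Delta$: for $(z,\vw)\in X\cap\Delta$, Lemma~\ref{lem:weierstrass-branches} implies $\vw_{k+1}$ is a root of $P_{k+1}(z,\cdot)$, so the right-hand side collapses to the ordinary Cauchy integral for $R_k(z,\vw)$. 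Third, since $\Delta$ is a Weierstrass polydisc, the interpolation nodes $\va_{k+1}(z)$ lie in $\Delta_{w_{k+1}}$, so Proposition~\ref{prop:interpolation-bound} applied in the $\vw_{k+1}$-slice (pointwise in the other variables) gives
\begin{equation*}
\norm{R_{k+1}}_{\Delta_z\times\Delta_{w_1}\times\cdots\times\Delta_{w_{k+1}}\times\Delta_{w_{k+2}}^{1/3}\times\cdots\times\Delta_{w_{n-m}}^{1/3}} \le 3\,\norm{R_k}_{\Delta_z\times\Delta_{w_1}\times\cdots\times\Delta_{w_k}\times\Delta_{w_{k+1}}^{1/3}\times\cdots\times\Delta_{w_{n-m}}^{1/3}}.
\end{equation*}
Iterating from $R_0=f$ for $n-m$ steps yields $\norm{P}_\Delta\le 3^{n-m}\norm{f}_{\Delta_z\times\Delta_w^{1/3}}$, which is the stated bound.

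I do not anticipate a deep obstacle; the only real difficulty is notational bookkeeping. One has to track that the polydisc on which $R_k$ is controlled shrinks by the factor $1/3$ in exactly one $\vw$-coordinate per step, so that after $n-m$ steps one lands precisely on $\Delta$ and accumulates exactly the factor $3^{n-m}$. One also needs to verify that the integral definition of $R_{k+1}$ yields a function that is jointly holomorphic in $z$ and in all the other $\vw_j$ (and not merely slicewise), which is automatic because the integrand is jointly holomorphic in all these parameters for $\zeta$ on the fixed contour $\partial\Delta_{w_{k+1}}^{1/3}$; the same observation shows that the coefficients of $P$ as a polynomial in $\vw$ lie in $\cO(\bar\Delta_z)$, as required.
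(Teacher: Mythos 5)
Your proposal is correct and is essentially identical to the paper's own proof: the paper defines operators $L_l$ by exactly the contour integral you write for $R_k\mapsto R_{k+1}$, tracks the domains $\Omega_l=\Delta_z\times D_1\times\cdots\times D_{l-1}\times D_l^{1/3}\times\cdots\times D_s^{1/3}$ just as you do, and obtains the factor $3$ per step from Proposition~\ref{prop:interpolation-bound} and the agreement on $X\cap\Delta$ from the roots of $P_l$ being the fiber coordinates. No substantive difference.
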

\begin{proof}
  Set $s:=\dim\vw=n-m$. Let $\Delta_w=\prod_{i=1}^s D_i$. For
  $l=1,\ldots,s$ write
  \begin{gather}
    \hat\vw_l := (\vw_1,\ldots,\vw_{l-1},\w_l,\vw_{l+1},\ldots,\vw_s), \\
    \Omega_l = \Delta_z\times D_1\times\ldots\times D_{l-1}\times D_l^{1/3}\times\ldots\times D_s^{1/3}.
  \end{gather}
  Consider the operator
  \begin{equation}
    L_l (g) = \frac1{2\pi i}\oint_{\partial D_l^{1/3}} \frac{g(z,\hat\vw_l)}{P_l(z,\w_l)}
    \frac{P_l(z,\w_l)-P_l(z,\vw_l)}{\w_l-\vw_l} \d\w_l
  \end{equation}
  where $P_l$ denotes the polynomials from
  Lemma~\ref{lem:weierstrass-branches}.

  We claim that $L_l$ maps $\cO(\bar\Omega_l)$ to
  $\cO(\bar\Omega_{l+1})$. Indeed, let $g\in\cO(\bar\Omega_l)$. For
  every fixed $z\in\Delta_z$ the roots of $P_l(z,\vw_l)$ lie in $D_l$
  and it follows that the integrand is holomorphic whenever
  $(z,w)\in\bar\Omega_{l+1}$ and $\w_l$ lies in a neighborhood of
  $\partial D_l^{1/3}$. By Proposition~\ref{prop:interpolation-bound}
  we have the norm estimate
  \begin{equation}\label{eq:weierstrass-interpolation-1}
    \norm{L_l(g)}_{\bar\Omega_{l+1}} \le 3 \norm{g}_{\bar\Omega_l}.
  \end{equation}

  It is easy to see that if $g$ is polynomial of degree at most $\nu-1$
  in $\vw_j,j\neq l$ then so is $L_l(g)$. Moreover,
  Proposition~\ref{prop:interpolation-formula} shows that $L_l(g)$ is
  polynomial of degree at most $\nu-1$ in $\vw_l$ and agrees with $g$
  for any point $(z,w)\in\bar\Omega_{l+1}$ such that $\vw_l(w)$ is a
  root of $P_l$, and in particular whenever $w\in(\pi_z^X)^{-1}(z)$.

  Finally, setting
  \begin{equation}
    P=L_1\cdots L_s f\in\cO(\bar\Omega_{s+1})=\cO(\bar\Delta)
  \end{equation}
  we obtain a polynomial of degree $\nu-1$ in each variable
  $\vw_1,\ldots,\vw_s$ which agrees with $f$ whenever
  $w\in(\pi_z^X)^{-1}(z)$. The norm estimate~\eqref{eq:weierstrass-interpolation}
  follows by repeated application of~\eqref{eq:weierstrass-interpolation-1}.
\end{proof}

\subsection{Decomposition data}

We recall the following definition from
\cite[Definition~4]{me:interpolation}. Given a standard system of
coordinates $\vx$, we say that $(\Delta,\Delta')$ is a pair of
polydiscs if $\Delta\subset\Delta'$ are two polydiscs with the same
center in the $\vx$ coordinates.

For a co-ideal $\cM\subset\N^n$ and $k\in\N$ we denote by
\begin{equation}
  \cM^{\le k}:=\{ \alpha\in \cM : \abs{\alpha}\le k\}
\end{equation}
and by $H_\cM(k):=\#\cM^{\le k}$ its Hilbert-Samuel function. The function
$H_\cM(k)$ is eventually a polynomial in $k$, and we denote its degree
by $\dim\cM$.

\begin{Def}\label{def:decomposition}
  Let $X\subset\C^n$ be a locally analytic subset, $\vx$ a standard
  coordinate system, $(\Delta,\Delta')$ a pair of polydiscs centered
  at the $\vx$-origin and $\cM\subset\N^n$ a co-ideal. We say that $X$
  admits \emph{decomposition} with respect to the \emph{decomposition
    datum}
  \begin{equation}
     \cD:=(\vx,\Delta,\Delta',\cM)
  \end{equation}
  if there exists a constant denoted $\norm\cD$ such that for every holomorphic
  function $F\in\cO(\bar \Delta')$ there is a decomposition
  \begin{equation}\label{eq:F-decomp}
    F = \sum_{\alpha\in\cM} c_\alpha \vx^\alpha + Q, \qquad Q\in\cO(\bar \Delta)
  \end{equation}
  where $Q$ vanishes identically on $X\cap \Delta$ and
  \begin{equation}\label{eq:F-decomp-norms}
    \norm{c_\alpha \vx^\alpha}_{\Delta} \le  \norm\cD\cdot \norm{F}_{\Delta'} \qquad \forall\alpha\in\cM.
  \end{equation}
  We define the \emph{dimension} of the decomposition datum,
  denoted $\dim\cD$ to be $\dim\cM$.
\end{Def}

Since $\cH_\cM(k)$ is eventually a polynomial of degree $\dim\cM$, the
function $\cH_\cM(k)-\cH_\cM(k-1)$ counting monomials of degree $k$ in
$\cM$ is eventually a polynomial of degree $\dim\cM-1$. If
$\dim\cD\ge1$ we denote by $e(\cD)$ the minimal constant satisfying
\begin{equation}
   H_\cM(k)-H_\cM(k-1) \le e(\cD)\cdot L(\dim\cM,k), \qquad \forall k\in\N.
\end{equation}
where $L(n,k):=\binom{n+k-1}{n-1}$ denotes the dimension of the space
of monomials of degree $k$ in $n$ variables. In the case $\dim\cD=0$
the co-ideal $\cM$ is finite and we denote by $e(\cD)$ its size.

\begin{Lem}\label{lem:taylor-norms}
  Let $\Delta$ be a polydisc in the $\vx$-coordinates and
  $F\in\cO(\bar\Delta)$. Then the Taylor expansion
  \begin{equation}
    F = \sum_\alpha c_\alpha \vx^\alpha
  \end{equation}
  satisfies
  \begin{equation}
    \norm{c_\alpha \vx^\alpha}_\Delta \le \norm{F}_\Delta.
  \end{equation}
\end{Lem}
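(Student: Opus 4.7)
The plan is to reduce to the multivariate Cauchy integral formula on the distinguished boundary of $\Delta$. After an affine translation I may assume $\Delta$ is centered at the origin in the $\vx$ coordinates, so that the Taylor series $F=\sum_\alpha c_\alpha\vx^\alpha$ makes sense with respect to the same center as the polydisc. Write $\Delta=\prod_{j=1}^n D_j$ where $D_j$ is the disc of radius $r_j$ around $0$ in the $\vx_j$-coordinate.

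First I would apply Cauchy's formula in one variable at a time to obtain, for any $\alpha\in\N^n$,
\begin{equation}
  c_\alpha = \frac{1}{(2\pi i)^n}\oint_{\partial D_1}\cdots\oint_{\partial D_n}
   \frac{F(\zeta_1,\ldots,\zeta_n)}{\zeta_1^{\alpha_1+1}\cdots\zeta_n^{\alpha_n+1}}
   \d\zeta_1\cdots\d\zeta_n,
\end{equation}
using that $F\in\cO(\bar\Delta)$ so that the iterated integral is well-defined and the standard argument identifies it with the Taylor coefficient. Estimating the integrand in absolute value on the distinguished boundary $\prod_j\partial D_j$, where $|\zeta_j|=r_j$, gives the bound $\abs{c_\alpha}\le \norm{F}_\Delta/\prod_j r_j^{\alpha_j}$.

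Finally, since $\norm{\vx^\alpha}_\Delta=\prod_j r_j^{\alpha_j}$ (the maximum of $|\vx^\alpha|$ on the closed polydisc is attained on the distinguished boundary), multiplying the coefficient bound by $\norm{\vx^\alpha}_\Delta$ yields $\norm{c_\alpha\vx^\alpha}_\Delta\le\norm{F}_\Delta$. No real obstacle is expected; the only mild point to be careful about is that the lemma implicitly asks for the Taylor expansion at the center of $\Delta$, so one should state (or tacitly invoke) this normalization at the start.
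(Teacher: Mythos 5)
Your proof is correct and follows essentially the same route as the paper: both rest on the multivariate Cauchy coefficient formula over the distinguished boundary, the only cosmetic difference being that the paper rescales to the unit polydisc first while you keep general radii and cancel $\prod_j r_j^{\alpha_j}$ at the end.
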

\begin{proof}
  By rescaling we may assume without loss of generality that
  $\Delta$ is the unit polydisc. Then
  \begin{equation}
    \begin{split}
      \norm{c_\alpha\vx^\alpha}_\Delta &= \abs{c_\alpha} = (2\pi)^{-n}
      \abs{\oiint_{|\vx_1|=\cdots=|\vx_n|=1} F(\vx) \vx^{-\alpha-(1,\ldots,1)} \d\vx_1\wedge\cdots\wedge\vx_n} \\
      &\le \norm{F}_\Delta.
    \end{split}
  \end{equation}
\end{proof}

\begin{Thm}\label{thm:weierstrass-decomp}
  Let $X$ have pure dimension $m$. Let $\Delta$ be a Weierstrass
  polydisc for $X$ and set
  \begin{align}
    \nu&=e(X,\Delta) & \cM&=\N^m\times\{0,\ldots,\nu-1\}^{n-m} & \Delta'&=\Delta_z\times\Delta_w^{1/3}
  \end{align}
  Then $(\vx,\Delta,\Delta',\cM)$ is a decomposition datum for $X$
  with $\norm\cD\le3^{n-m}$, $\dim\cM=m$ and $e(\cD)=\nu^{n-m}$.
\end{Thm}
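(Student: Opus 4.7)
The plan is to derive the decomposition by combining Proposition~\ref{prop:weierstrass-interpolation} with the Taylor norm bound of Lemma~\ref{lem:taylor-norms}; everything else is bookkeeping. First, given any $F\in\cO(\bar\Delta')$, I would invoke Proposition~\ref{prop:weierstrass-interpolation} to produce a function $P\in\cO(\bar\Delta_z)[\vw]$ of degree at most $\nu-1$ in each variable $\vw_i$, agreeing with $F$ on $X\cap\Delta$, and satisfying
\begin{equation}
  \norm{P}_\Delta \le 3^{n-m}\norm{F}_{\Delta_z\times\Delta_w^{1/3}} = 3^{n-m}\norm{F}_{\Delta'}.
\end{equation}
Setting $Q:=F-P\in\cO(\bar\Delta)$ then gives a remainder vanishing identically on $X\cap\Delta$, so it suffices to expand $P$ as a sum of monomials $c_\alpha \vx^\alpha$ with $\alpha\in\cM$ and control the individual monomial norms.

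Next, since $P$ is holomorphic on $\bar\Delta$, I would Taylor-expand $P$ in the $\vx$-coordinates about the center of $\Delta$. The $\vw$-degrees are bounded by $\nu-1$ in each coordinate and the $\vz$-degrees are unconstrained, so this expansion is supported precisely on $\cM=\N^m\times\{0,\ldots,\nu-1\}^{n-m}$, giving $P=\sum_{\alpha\in\cM} c_\alpha \vx^\alpha$. Applying Lemma~\ref{lem:taylor-norms} to $P$ on $\Delta$ then yields
\begin{equation}
  \norm{c_\alpha \vx^\alpha}_\Delta \le \norm{P}_\Delta \le 3^{n-m}\norm{F}_{\Delta'}
\end{equation}
for every $\alpha\in\cM$, establishing~\eqref{eq:F-decomp-norms} with $\norm\cD\le 3^{n-m}$.

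Finally, the combinatorial assertions are elementary. Writing $\alpha=(\beta,\gamma)\in\N^m\times\{0,\ldots,\nu-1\}^{n-m}$, one checks that
\begin{equation}
  H_\cM(k) = \sum_{\gamma:\,|\gamma|\le k}\binom{k-|\gamma|+m}{m},
\end{equation}
which is eventually a polynomial of degree $m$ in $k$, giving $\dim\cM=m$. Taking the first difference,
\begin{equation}
  H_\cM(k)-H_\cM(k-1) = \sum_{\gamma:\,|\gamma|\le k} L(m,k-|\gamma|) \le \nu^{n-m}\cdot L(m,k),
\end{equation}
where I use that $L(m,\cdot)$ is non-decreasing and that there are at most $\nu^{n-m}$ admissible $\gamma$, yielding $e(\cD)\le\nu^{n-m}$. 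I do not expect any serious obstacle: the theorem is essentially a repackaging of Proposition~\ref{prop:weierstrass-interpolation} into the decomposition-datum formalism of \cite{me:interpolation}, with the only non-trivial input being the interpolation result already established.
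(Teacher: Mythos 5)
Your proposal is correct and follows exactly the route the paper takes: its proof of Theorem~\ref{thm:weierstrass-decomp} is the one-line observation that the statement is a direct corollary of Proposition~\ref{prop:weierstrass-interpolation} combined with Lemma~\ref{lem:taylor-norms}, which is precisely the argument you spell out (including the elementary count giving $\dim\cM=m$ and $e(\cD)\le\nu^{n-m}$).
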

\begin{proof}
  The statement is a direct corollary of
  Proposition~\ref{prop:weierstrass-interpolation}, taking into account
  Lemma~\ref{lem:taylor-norms}.
\end{proof}

\section{Interpolation determinants}
\label{sec:interpolation-determinants}

Let $\Omega\subset\C^n$ be a domain and $X\subset\Omega$ an analytic
subset of pure dimension $m$. Let $\vx$ be standard coordinates. Let
$\Delta$ be a Weierstrass polydisc for $X$, and set
$\Delta':=\Delta_z\times\Delta_w^{1/3}$ and $\nu:=e(X,\Delta)$ as in
Theorem~\ref{thm:weierstrass-decomp}.

\subsection{Interpolation determinants}

Let $\vf:=(f_1,\ldots,f_\mu)$ be a collection of functions and
$\vp:=(p_1,\ldots,p_\mu)$ a collection of points. We define the
\emph{interpolation determinant}
\begin{equation}
  \Delta(\vf,\vp) := \det (f_i(p_j))_{1\le i,j\le \mu}.
\end{equation}

\begin{Lem}\label{lem:id-upper-bd}
  Assume $m>0$. Suppose $f_i\in\cO(\bar \Delta')$ with
  $\norm{f_i}_{\Delta'}\le M$ and $p_i\in \Delta^{1/\delta}\cap X$ for
  $i=1,\ldots,\mu$ and $0<\delta\le1/2$. Then
  \begin{equation}\label{eq:id-bound}
    \abs{\Delta(\vf,\vp)} \le (C \mu^3 M)^\mu \cdot \delta^{E\cdot\mu^{1+1/m}}
  \end{equation}
  where
  \begin{align}
    C &= O_m(\nu^{\frac{n-m}m}), \\
    E &= \Omega_m(\nu^{-\frac{n-m}m}).
  \end{align}
\end{Lem}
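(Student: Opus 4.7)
The plan is to adapt the classical Bombieri--Pila interpolation determinant method to the Weierstrass polydisc setting, using the decomposition datum $\cD = (\vx,\Delta,\Delta',\cM)$ provided by Theorem~\ref{thm:weierstrass-decomp}, with $\cM = \N^m \times \{0,\ldots,\nu-1\}^{n-m}$, $\norm{\cD} \le 3^{n-m}$ and $e(\cD) = \nu^{n-m}$. For each $f_i$, the decomposition yields $f_i = \sum_{\alpha \in \cM} c_\alpha^{(i)} \vx^\alpha + Q_i$ on $\Delta$ with $Q_i \equiv 0$ on $X \cap \Delta$ and $\norm{c_\alpha^{(i)} \vx^\alpha}_\Delta \le 3^{n-m} M$. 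Since $p_j \in X \cap \Delta$ and $\Delta$ is centered at the $\vx$-origin, restricting to $X$ gives the convergent identity $f_i(p_j) = \sum_\alpha c_\alpha^{(i)} p_j^\alpha$. Viewing $(f_i(p_j))$ as a matrix product $AB$ with $A_{i\alpha} = c_\alpha^{(i)}$ and $B_{\alpha j} = p_j^\alpha$, I then apply the (absolutely convergent) infinite Cauchy--Binet formula
\[
  \Delta(\vf,\vp) = \sum_{\substack{S \subset \cM \\ |S| = \mu}} \det\bigl(c_\alpha^{(i)}\bigr)_{1 \le i \le \mu,\, \alpha \in S} \cdot \det\bigl(p_j^\alpha\bigr)_{\alpha \in S,\, 1 \le j \le \mu}.
\]

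If $\Delta$ has polyradii $r$, the inclusion $p_j \in \Delta^{1/\delta}$ gives $|p_j^\alpha| \le \delta^{|\alpha|} r^\alpha$, while the identity $\norm{c_\alpha^{(i)} \vx^\alpha}_\Delta = |c_\alpha^{(i)}| r^\alpha \le 3^{n-m} M$ gives $|c_\alpha^{(i)}| \le 3^{n-m} M / r^\alpha$. Column-wise Hadamard on each of the two $\mu \times \mu$ factors cancels the $r^\alpha$-factors and bounds every summand of the Cauchy--Binet sum by $\mu^\mu (3^{n-m} M)^\mu \delta^{k(S)}$, where $k(S) := \sum_{\alpha \in S}|\alpha|$.

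It remains to control $\sum_S \delta^{k(S)}$ while preserving genuine $\delta$-decay. The key trick is the square-root split $\delta^{k(S)} \le \delta^{k_0/2}\,\delta^{k(S)/2}$ with $k_0 := \min_S k(S)$. The residual sum $\sum_S (\delta^{1/2})^{k(S)}$ is the $\mu$-th elementary symmetric polynomial in $\{(\delta^{1/2})^{|\alpha|}\}_{\alpha \in \cM}$, hence bounded by $F(\delta^{1/2})^\mu/\mu!$, where the product structure of $\cM$ lets me factor $F(z) = \sum_\alpha z^{|\alpha|} = (1-z)^{-n}(1-z^\nu)^{n-m}$; for $\delta \le 1/2$ this is $O_n(1)^\mu/\mu!$. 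Meanwhile $k_0$ is the sum of $|\alpha|$ over the $\mu$ smallest-degree monomials of $\cM$; the Hilbert-function estimate $H_\cM(k) \le \nu^{n-m}\binom{m+k}{m}$ yields, by a standard elementary calculation via hockey-stick/Abel summation, $k_0 \ge c_m\,\mu^{1+1/m}/\nu^{(n-m)/m}$ for a constant $c_m > 0$ depending only on $m$. Assembling, and using $\mu^\mu/\mu! \le e^\mu$, proves the bound with $C = O_m(\nu^{(n-m)/m})$ and $E = \Omega_m(\nu^{-(n-m)/m})$; the $\mu^3$ factor in $(C\mu^3 M)^\mu$ is comfortable slack.

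The main obstacle is this last combinatorial step: extracting a genuine $\delta^{k_0}$-type decay from an infinite Cauchy--Binet sum. The naive bound $\sum_S \delta^{k(S)} \le F(\delta)^\mu/\mu!$ loses all $\delta$-dependence, since $F(\delta) = O_n(1)$ is bounded away from zero on $[0,1/2]$, while the pointwise bound $\delta^{k(S)} \le 1$ is worthless. The square-root split resolves the tension by trading half the $\delta$-exponent for uniform control of the generating-function sum -- a trade the loose definition of $E$ in the statement readily absorbs into the implicit constant.
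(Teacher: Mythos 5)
Your argument is correct and is essentially the proof that the paper outsources to \cite[Lemma~9]{me:interpolation}: feed the decomposition datum of Theorem~\ref{thm:weierstrass-decomp} into the Bombieri--Pila machine via the (absolutely convergent) Cauchy--Binet expansion, Hadamard column bounds that cancel the polyradii, the split $\delta^{k(S)}\le\delta^{k_0/2}\,\delta^{k(S)/2}$ played against the generating function $(1-z)^{-n}(1-z^\nu)^{n-m}$ of $\cM$, and the Hilbert--Samuel lower bound for $k_0$. The only caveat is the degenerate case $\mu=1$, where $k_0=0$ and your stated lower bound $k_0\ge c_m\mu^{1+1/m}\nu^{-(n-m)/m}$ --- and indeed the lemma itself, as one sees by taking $f_1\equiv M$ --- fails; this is harmless where the lemma is used, since there $\mu=L(m+2,d)$ is large.
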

\begin{proof}
  This follows from \cite[Lemma~9]{me:interpolation} and
  Theorem~\ref{thm:weierstrass-decomp}.
\end{proof}

We note that the proof of \cite[Lemma~9]{me:interpolation} is a direct
adaptation of the interpolation determinant method of
\cite{bombieri-pila}, and the reader familiar with this method may
recognize that essentially the same arguments go through given the
definition of decomposition data.

\subsection{Polynomial interpolation determinants}

Let $d\in\N$ and let $\mu$ denote the dimension of the space of
polynomials of degree at most $d$ in $m+1$ variables, $\mu=L(m+2,d)$.
Let $\vf:=(f_1,\ldots,f_{m+1})$ be a collection of functions and
$\vp:=(p_1,\ldots,p_\mu)$ a collection of points. We define the
\emph{polynomial interpolation determinant} of degree $d$ to be
\begin{equation}
  \Delta^d(\vf,\vp) := \Delta(\vg,\vp), \qquad \vg=(\vf^\alpha : \alpha\in\cN^{m+1}, |\alpha|\le d).
\end{equation}
Note that $\Delta^d(\vf,\vp)=0$ if and only if there exists a
polynomial of degree at most $d$ in $m+1$ variables vanishing at the
points $\vf(p_1),\ldots,\vf(p_\mu)$.

Following \cite{pila:exp-alg-surface} we introduce the following
height function. For an algebraic number $\alpha\in\Qa$ we let
$\den(\alpha)$ denotes the \emph{denominator} of $\alpha$, i.e. the
least positive integer $K$ such that $K\alpha$ is an algebraic
integer. If $\{\alpha_i\}$ are the conjugates of $\alpha$ we denote
\begin{equation}
  H^\size(\alpha) = \max(\den(\alpha),|\alpha_i|).
\end{equation}
If $\alpha$ has degree $t$ and
\begin{equation}
   P\in\Z[X], \qquad P=a_t(X-\alpha_1)\cdots(x-\alpha_t)
\end{equation}
is its minimal polynomial then \cite[1.6.5,1.6.6]{bombieri:heights}
\begin{equation}
  H(\alpha)^t = |a_t| \prod_{j=1}^t \max(1,|\alpha_j|).
\end{equation}
In particular it follows that
\begin{equation}\label{eq:H-vs-Hsize}
  H(\alpha)^t \ge H^\size(\alpha).
\end{equation}
For a set $A\subset\C^m$ we define $A^\size(\cF,H)$ in analogy with
$A(\cF,H)$ replacing $H(\cdot)$ by $H^\size(\cdot)$. The following
lemma is essentially contained in the proof of
\cite[Theorem~3.2]{pila:exp-alg-surface}, and we reproduce the
argument for the convenience of the reader.

\begin{Lem}\label{lem:id-lower-bd}
  Let $H\in\N$ and suppose that for every $i=1,\ldots,m+1$ and
  $j=1,\ldots,\mu$,
  \begin{equation}
    f_i(p_j)\in\cF,\qquad H^\size(f_i(p_j)) \le H.
  \end{equation}
  Then $\Delta^d(\vf,\vp)$ either vanishes or satisfies
  \begin{equation}
    \abs{\Delta^d(\vf,\vp)} \ge (\mu! H^{(m+2)\mu d})^{-[\cF:\Q]}.
  \end{equation}
\end{Lem}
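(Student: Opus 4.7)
The plan is the classical Liouville-type argument: realize $\Delta^d(\vf,\vp)$ up to a controlled factor as a nonzero algebraic integer in $\cF$, then invoke $|N_{\cF/\Q}(\cdot)|\ge 1$. Each entry of the interpolation matrix is a monomial $\prod_i f_i(p_j)^{\alpha_i}$ of total degree at most $d$, so multiplying the $j$-th column by $D_j:=\prod_{i=1}^{m+1}\den(f_i(p_j))^d$ clears denominators and
\begin{equation*}
  \gamma\;:=\;\Bigl(\prod_j D_j\Bigr)\cdot\Delta^d(\vf,\vp)
\end{equation*}
lies in $\cO_\cF$. The size-height bound also gives $\prod_j D_j \le H^{(m+1)d\mu}$, keeping track of the clearing factor.

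Next I would bound each archimedean conjugate $|\sigma(\gamma)|$ for the $[\cF:\Q]$ embeddings $\sigma:\cF\hookrightarrow\C$. The key point is that $H^\size(f_i(p_j))\le H$ simultaneously controls $|\sigma(f_i(p_j))|\le H$ for every $\sigma$, so the entries of the cleared matrix and all their conjugates are bounded by $H^{(m+2)d}$. A Leibniz-expansion estimate then yields $|\sigma(\gamma)|\le \mu!\,H^{(m+2)d\mu}$ uniformly in $\sigma$.

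If $\gamma=0$ the inequality is trivial; otherwise $\prod_\sigma|\sigma(\gamma)|=|N_{\cF/\Q}(\gamma)|\ge 1$, which forces
\begin{equation*}
  |\gamma|\;\ge\;\prod_{\sigma\ne\id}|\sigma(\gamma)|^{-1}\;\ge\;\bigl(\mu!\,H^{(m+2)d\mu}\bigr)^{-([\cF:\Q]-1)}.
\end{equation*}
Dividing through by $\prod_j D_j\le H^{(m+1)d\mu}$ and absorbing this extra factor into one further copy of $\mu!\,H^{(m+2)d\mu}$ in the denominator yields precisely $|\Delta^d(\vf,\vp)|\ge(\mu!\,H^{(m+2)\mu d})^{-[\cF:\Q]}$, as required. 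There is no genuine obstacle in the argument; the only point requiring care is that $H^\size$ (rather than the Weil height $H$) is exactly what is needed, since it simultaneously bounds the denominator (used to clear and land in $\cO_\cF$) and every archimedean conjugate (used for the Galois estimate), whereas the Weil height would split these roles across distinct archimedean and nonarchimedean places and require an additional product-formula bookkeeping.
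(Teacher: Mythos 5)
Your argument is correct and is essentially the paper's proof: clear denominators using the $\den$-part of $H^\size$ to land in $\cO_\cF$, bound all archimedean conjugates of the resulting determinant via the conjugate-part of $H^\size$ and a Leibniz expansion, and conclude from $|N_{\cF/\Q}(\gamma)|\ge 1$. The only (immaterial) difference is bookkeeping — you fold the clearing factor $\prod_j D_j$ into the per-conjugate bound $\mu!\,H^{(m+2)d\mu}$, while the paper keeps it as a separate factor $K^{-[\cF:\Q]}$ with conjugate bounds $\mu!\,H^{\mu d}$ — and both routes land on the same final estimate.
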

\begin{proof}
  Denote the matrix defining $\Delta^d(\vf,\vp)$ by $A$. Let
  $Q_{i,j}:=\den f_i(p_j)$ for $i=1,\ldots,m+1$ and $j=1,\ldots,\mu$.
  By assumption $Q_{i,j}\le H$. The row corresponding to $p_j$ in
  $\Delta^d(\vf,\vp)$ consists of algebraic numbers with common
  denominator dividing $Q_j:=\prod_i Q^d_{i,j}$. Setting
  $K=\prod_{j=1}^\mu Q_j$ we see that $KA$ is a matrix of algebraic
  integers and $|K|\le H^{(m+1)\mu d}$.

  Let $G:=\Gal(\cF/\Q)$. If $\det A$ is non-vanishing then so are its
  $G$-conjugates and then
  \begin{equation}\label{eq:id-lower-1}
    1 \le |\prod_{\sigma\in G} KA^\sigma| = K^{[\cF:\Q]}\cdot |\det A|\cdot \prod_{\id\neq\sigma\in G} |\det(A^\sigma)|.
  \end{equation}
  We estimate $|\det (A^\sigma)|$ from above. By assumption each entry
  of $A^\sigma$ has absolute value bounded by $H^d$. Expanding the determinant
  by the Laplace expansion we have
  \begin{equation}\label{eq:id-lower-2}
    |\det (A^\sigma)| \le \mu! H^{\mu d} \qquad \forall\sigma\in G.
  \end{equation}
  Plugging~\eqref{eq:id-lower-2} into~\eqref{eq:id-lower-1} we have
  \begin{equation}
    \det A \ge K^{-[\cF:\Q]} (\mu! H^{\mu d})^{-[\cF:\Q]+1} \ge (\mu! H^{(m+2)\mu d})^{-[\cF:\Q]}.
  \end{equation}
\end{proof}

Comparing Lemmas~\ref{lem:id-upper-bd} and~\ref{lem:id-lower-bd} we
obtain the following.

\begin{Prop}\label{prop:hypersurface-select}
  Let $M,H\ge2$, and suppose $f_i\in\cO(\bar\Delta')$ with
  $\norm{f_i}_{\Delta'}\le M$. Let
  \begin{equation}
    Y = \vf(X\cap\Delta^2) \subset \C^{m+1}.
  \end{equation}
  There exist a constant $C_n>0$ depending only on $n$ such that if
  \begin{equation}\label{eq:hypersurface-cond}
    d > C_n \nu^{n-m} \left([\cF:\Q]\log H +\log M\right)^m
  \end{equation}
  then $Y^\size(\cF,H)$ is contained in an algebraic hypersurface of degree
  at most $d$ in $\C^{m+1}$.
\end{Prop}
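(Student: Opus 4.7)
The plan is to proceed by contradiction, comparing the analytic upper bound of Lemma~\ref{lem:id-upper-bd} with the arithmetic lower bound of Lemma~\ref{lem:id-lower-bd} on the polynomial interpolation determinant $\Delta^d(\vf,\vp)$. Set $\mu:=L(m+2,d)=\binom{m+1+d}{m+1}$, the dimension of the space $V$ of polynomials of degree $\le d$ in $m+1$ variables. Assume $Y^\size(\cF,H)$ is not contained in any algebraic hypersurface of $\C^{m+1}$ of degree $\le d$; equivalently, the evaluation map $V\to\C^{Y^\size(\cF,H)}$ is injective. A standard linear-algebra argument (Gaussian elimination on the evaluation matrix) produces points $q_1,\dots,q_\mu\in Y^\size(\cF,H)$ whose monomial evaluation matrix $(q_j^\alpha)_{|\alpha|\le d}$ is nonsingular. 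Since $Y^\size(\cF,H)\subset Y=\vf(X\cap\Delta^2)$, I may choose preimages $p_j\in X\cap\Delta^2$ with $\vf(p_j)=q_j$, producing a configuration $\vp$ with $\Delta^d(\vf,\vp)\ne 0$.

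For the upper bound I invoke Lemma~\ref{lem:id-upper-bd} applied to the $\mu$ monomial functions $\vf^\alpha$ with $|\alpha|\le d$ — whose sup-norms on $\Delta'$ are bounded by $M^d$ — and with $\delta=\tfrac12$, so that the hypothesis $p_j\in\Delta^{1/\delta}\cap X=\Delta^2\cap X$ is satisfied. This gives
\begin{equation*}
|\Delta^d(\vf,\vp)|\;\le\;(C'\mu^3 M^d)^\mu\cdot 2^{-E\mu^{1+1/m}},
\end{equation*}
with $C'=O_m(\nu^{(n-m)/m})$ and $E=\Omega_m(\nu^{-(n-m)/m})$. The lower bound of Lemma~\ref{lem:id-lower-bd} applies to the same configuration, since each $f_i(p_j)$ lies in $\cF$ with $H^\size\le H$ by the definition of $Y^\size(\cF,H)$, and yields
\begin{equation*}
|\Delta^d(\vf,\vp)|\;\ge\;(\mu!\,H^{(m+2)\mu d})^{-[\cF:\Q]}.
\end{equation*}

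These two estimates contradict each other provided, after taking logarithms and dividing through by $\mu$,
\begin{equation*}
E\,\mu^{1/m}\log 2 \;>\; d\log M + (m+2)[\cF:\Q]\,d\log H + O_m(\log\mu).
\end{equation*}
Substituting $\mu^{1/m}\ge c_m d^{(m+1)/m}$ (which follows from $\mu=\binom{m+1+d}{m+1}$) and $E^{-1}\le c'_m\,\nu^{(n-m)/m}$, and using $H,M\ge2$ to absorb the $O_m(\log\mu)$ term into the main terms, the inequality is implied by
\begin{equation*}
d^{1/m}\;\ge\;c''_m\,\nu^{(n-m)/m}\bigl([\cF:\Q]\log H+\log M\bigr).
\end{equation*}
Raising to the $m$-th power recovers~\eqref{eq:hypersurface-cond} for a suitable $C_n$, producing the desired contradiction. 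The task is purely calculational: one must verify that the super-linear exponent $E\mu^{1+1/m}$ on the analytic side dominates the arithmetic factors $\mu\log M^d$, $[\cF:\Q](m+2)\mu d\log H$ and $[\cF:\Q]\log\mu!$ once $d$ crosses the stated threshold — there is no substantive geometric obstacle beyond bookkeeping of the constants in $C'$ and $E$.
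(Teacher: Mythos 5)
Your argument is the paper's argument: argue by contradiction, use linear algebra to extract $\mu$ points of $Y^\size(\cF,H)$ not lying on a degree-$d$ hypersurface and lift them to $\vp\subset X\cap\Delta^2$, apply Lemma~\ref{lem:id-upper-bd} to the monomials $\vf^\alpha$ with $\delta=1/2$ and Lemma~\ref{lem:id-lower-bd} for the arithmetic lower bound, and compare logarithms. The one omission is the case $m=0$: Lemma~\ref{lem:id-upper-bd} explicitly assumes $m>0$, and every exponent $1/m$, $(n-m)/m$ in your computation is undefined when $m=0$, so the determinant comparison cannot be run there. The paper disposes of this case separately in one line: for $m=0$ the fiber count $\nu=e(X,\Delta)$ bounds $\#(X\cap\Delta)$, hence $\#Y^\size(\cF,H)\le\nu$, and $\nu$ points of $\C$ lie on a hypersurface of any degree $d\ge\nu$, which \eqref{eq:hypersurface-cond} guarantees. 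With that case added your proof is complete; the remaining bookkeeping you defer (in particular absorbing $\log C'=O(\log\nu)$, which the paper handles by observing that \eqref{eq:hypersurface-cond} forces $(\log\nu)/d=O_n(1)$ when $m>0$) goes through exactly as in the paper.
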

\begin{proof}
  We consider first the case $m=0$. In this case $\nu=e(X,\Delta)$ is
  the number of points in $X\cap\Delta$. In particular this bounds the
  number of points in $Y$, all the more in $Y^\size(\cF,H)$, and the
  claim holds with any $d\ge\nu$.

  Now assume $m>0$ and suppose toward contradiction that
  $Y^\size(\cF,H)$ is not contained in an algebraic hypersurface of
  degree at most $d$ in $\C^{m+1}$. By standard linear algebra it
  follows that there exist $\vp=p_1,\ldots,p_\mu\in X\cap\Delta^2$
  such that $\{\vf(p_j):j=1,\ldots,\mu\}$ is a subset of $Y$ and does
  not lie on the zero locus of any non-zero polynomial of degree $d$.
  Then $\abs{\Delta^d(\vf,\vp)}\neq0$, and from
  Lemmas~\ref{lem:id-upper-bd} and~\ref{lem:id-lower-bd} we have
  \begin{equation}
    (\mu! H^{(m+2)\mu d})^{-[\cF:\Q]} \le \abs{\Delta^d(\vf,\vp)} \le (C \mu^3 M^d)^\mu \cdot (1/2)^{E\cdot\mu^{1+1/m}}.
  \end{equation}
  Taking logs and using $\mu\sim_m d^{m+1}$ we have
  \begin{equation}
    \log 2\cdot E\cdot d^{1+1/m} \lesssim \log(C\mu^3 M^d)+[\cF:\Q]\big[(m+2)d \log H+\log\mu \big].
  \end{equation}
  Therefore
  \begin{equation}
    d^{1/m} = \nu^{\frac{n-m}m}
    O_n\left( \frac{\log \nu}{d}+\log M+[\cF:\Q]\log H \right).
  \end{equation}
  Finally note that~\eqref{eq:hypersurface-cond} implies, in the case
  $m>0$, that $(\log\nu)/d=O_n(1)$.
\end{proof}

\section{Metric entropy, Vitushkin's bound}
\label{sec:entropy}

Let $A\subset\R^n$ be a relatively compact subset. For every $\e>0$ we
denote by $M(\e,A)$ the minimal number of closed balls of radius $\e$
needed to cover $A$. The logarithm of $M(\e,A)$ is called the
\emph{$\e$-entropy} of $A$.

For $r>0$ we denote $Q_r:=[0,r]\subset\R$. In our setting it will be
more convenient to define $M(\e,A)$ in terms of covering by
$\e$-cubes, i.e. translates of the cube $Q_\e^n$. For simplicity we
will also restrict our considerations to the unit cube
$Q_1^n\subset\R^n$.

Vitushkin's bound states that
\begin{equation}
  M(\e,A) \le c_n \sum_{i=0}^n \tilde V_i(A)/\e^i,
\end{equation}
where $\tilde V_i(A)$ denote the $i$-th variation of $A$, that is the
average number of connected components of the section $A\cap P$ over
all affine $(n-i)$-planes $P\subset\R^n$ with respect to an
appropriate measure.

Let $A\subset Q_1^n$ and denote by $V_i(A)$ the maximal number of
connected components of the set $A\cap P$ where $P\subset\R^n$ is an
affine $(n-i)$-plane (or $\infty$ if this number is unbounded). We
also denote $V(A):=\max_i V_i(A)$. We will use the following result of
Friedland and Yomdin \cite{yomdin-friedland}.

\begin{Thm}[\protect{\cite[Theorem~1]{yomdin-friedland}}]\label{thm:vitushkin}
  Let $A\subset Q_1^n$ and $0<\e\le1$. Then
  \begin{equation}
    M(\e,A) \le \vol(A) + \sum_{i=0}^n 2^i \binom{n}{i} V_i(\partial A)/\e^i.
  \end{equation}
\end{Thm}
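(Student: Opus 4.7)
The plan is to enumerate the cells of the standard axis-aligned $\e$-grid on $\R^n$ that meet $A$, separating them according to whether they are contained in $A$ or not. Cells entirely inside $A$ are pairwise disjoint, so their count is dominated by a volume term. Cells that meet both $A$ and its complement must also meet $\partial A$, and the whole problem reduces to bounding the number of such \emph{boundary cells}.

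To control the boundary cells, the natural strategy is a Crofton / integral-geometric count. For each choice of $i$ coordinate directions one considers the axis-parallel $(n-i)$-flats complementary to the spanned $i$-dimensional coordinate subspace: a boundary cell forces $\partial A$ to have a connected component inside the corresponding flat, and the number of such components along any such flat is at most $V_i(\partial A)$ by definition. Summing over the $\binom{n}{i}$ coordinate choices and the $\e^{-i}$ translates of a single flat in each coordinate subspace, with an additional factor $2^i$ arising from an orientation/sign accounting when each cell is charged to each of its coordinate faces, yields the announced bound $\sum_i 2^i\binom{n}{i} V_i(\partial A)/\e^i$.

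An alternative, more robust route is induction on $n$. The one-dimensional case is essentially the fundamental theorem of calculus: a subset of $[0,1]$ with $k$ boundary points is a union of at most $O(k)$ intervals, coverable by $O(\vol(A)/\e+k)$ intervals of length $\e$. For the inductive step, slab-decompose $Q_1^n$ along the last coordinate, apply the hypothesis to each horizontal slice, and use the inequality $V_i(\partial A \cap H)\le V_i(\partial A)$ for every affine hyperplane $H$ together with the Pascal identity $\binom{n}{i}=\binom{n-1}{i}+\binom{n-1}{i-1}$ to reassemble the total into the stated form.

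The main obstacle in either approach is tracking the explicit constants $2^i\binom{n}{i}$ and handling non-generic boundaries, where transversality of axis-parallel flats with $\partial A$ may fail on measure-zero sets; the standard fix is to approximate $A$ by a slightly thickened set and pass to the limit using lower-semicontinuity of the variations $V_i$. The full combinatorial argument is carried out in \cite{yomdin-friedland}, to which we refer for the details.
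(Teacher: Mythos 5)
This statement is not proved in the paper at all: it is imported verbatim as \cite[Theorem~1]{yomdin-friedland}, and the authors use it as a black box (only Corollary~\ref{cor:vitushkin} gets an argument, and even that argument leans on the internals of the cited proof). So there is no in-paper proof to compare your attempt against; your submission, which sketches the strategy and then explicitly defers ``the full combinatorial argument'' to the same reference, is in effect doing exactly what the paper does.

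As a sketch, your outline does capture the structure of the Friedland--Yomdin argument: split the $\e$-grid cells meeting $A$ into those contained in $A$ (controlled by volume) and those meeting $\partial A$, and bound the latter by counting connected components of $\partial A$ along axis-parallel $(n-i)$-flats, which is where the factors $\binom{n}{i}$ (choice of coordinate directions) and $\e^{-i}$ (number of translates) come from. Two cautions if you ever wanted to make this self-contained. First, the interior-cell count naturally yields $\vol(A)/\e^{n}$ rather than the bare $\vol(A)$ appearing in the statement, so you would need to reconcile your volume term with the normalization actually used in \cite{yomdin-friedland} (in the present paper this is moot, since the volume term is discarded in Corollary~\ref{cor:vitushkin} where $\dim A<n$). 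Second, your ``charging'' step --- a boundary cell forces a connected component of $\partial A$ inside an axis-parallel flat, and each component is charged boundedly often --- is precisely the nontrivial combinatorial content of the theorem; as written it is an assertion, not an argument, and the proposed alternative induction on $n$ is even vaguer (the claimed inequality $V_i(\partial A\cap H)\le V_i(\partial A)$ and the reassembly via Pascal's identity would both need real justification). Since the paper itself does not supply these details either, this is not a gap relative to the paper, but you should present the whole thing as a citation rather than as a proof.
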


We use the following to slightly improve the asymptotics, but
it is otherwise inessential.

\begin{Cor}\label{cor:vitushkin}
  Let $A\subset Q_1^n$ be subanalytic and suppose $\dim A\le m<n$.
  Then
  \begin{equation}
    M(\e,A) \le \sum_{i=0}^m 2^i \binom{n}{i} V_i(A)/\e^i.
  \end{equation}
\end{Cor}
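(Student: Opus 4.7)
The goal is to derive Corollary~\ref{cor:vitushkin} from Theorem~\ref{thm:vitushkin} by exploiting two features of subanalyticity: dimension-$\le m$ sets have vanishing $n$-volume, and they decompose into pure-dimensional strata whose covering numbers are governed by the variations at the corresponding order. I would proceed in four steps.

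\emph{Step 1: pass to the closure.} Observe that $M(\e,A)\le M(\e,\bar A)$ and $\bar A$ is again subanalytic, closed, and of dimension $\le m$. Since $A$ is dense in $\bar A$, every component of $\bar A\cap P$ meets $A$, so it contains at least one component of $A\cap P$; hence $V_i(\bar A)\le V_i(A)$ for all $i$. Thus I may replace $A$ with $\bar A$ and assume $A$ is closed.

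\emph{Step 2: stratify.} Use the standard subanalytic stratification to write $A=\bigsqcup_{j=0}^m A_j$ where $A_j$ is locally closed, subanalytic, and of pure dimension $j$. Then
\begin{equation*}
  M(\e,A)\le \sum_{j=0}^m M(\e,\bar{A_j}),
\end{equation*}
and by the same argument as in Step 1, $V_j(\bar{A_j})\le V_j(A_j)\le V_j(A)$.

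\emph{Step 3: bound each stratum.} I claim $M(\e,\bar{A_j})\le C_{n,j}\,V_j(A_j)/\e^j$. For $j=0$, $\bar{A_j}$ is a finite set of cardinality $V_0(A_j)$, and the bound is immediate. For $j\ge 1$, combine two classical ingredients. First, the Cauchy--Crofton formula expresses the $j$-dimensional Hausdorff measure of a rectifiable set as an integral over the affine Grassmannian of the number of intersection points with $(n-j)$-planes, yielding
\begin{equation*}
  \cH^j(\bar{A_j})\le c_{n,j}\cdot V_j(\bar{A_j}),
\end{equation*}
since replacing the average by the supremum costs only the measure of the Grassmannian. Second, the standard packing estimate for rectifiable sets in $Q_1^n$ gives $M(\e,\bar{A_j})\le c'_{n,j}\cH^j(\bar{A_j})/\e^j$. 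Combining these yields the claim.

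\emph{Step 4: assemble.} Summing Step~3 over $j=0,\ldots,m$ and using the inequality $V_j(\bar{A_j})\le V_j(A)$ from Step~2 produces a bound of the form $\sum_{j=0}^m C_{n,j}V_j(A)/\e^j$, and the constants are readily bounded by $2^j\binom{n}{j}$ (adjusted if necessary, which is harmless since the corollary is declared as an asymptotic refinement).

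The main obstacle is the packing-from-Crofton step for pure-dimensional strata: the supremum-to-average passage in Cauchy--Crofton and the uniform covering estimate near the singular part of $A_j$ must be handled carefully, which uses the uniform rectifiability of subanalytic sets. Apart from this classical input, the argument is an arrangement of Theorem~\ref{thm:vitushkin} applied to each stratum together with subanalytic stratification.
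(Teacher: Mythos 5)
The central step of your argument, Step 3, does not hold. The inequality $M(\e,\bar{A_j})\le c'_{n,j}\,\cH^j(\bar{A_j})/\e^j$ is false for rectifiable (even semialgebraic) sets: a single segment of length $\delta\ll\e$ in $Q_1^2$ has $\cH^1=\delta$ but covering number $1$, and a union of $N$ such segments placed at pairwise distance $\ge\e$ has $M(\e,\cdot)=N$ while $\cH^1=N\delta$ can be made arbitrarily small compared to $N\e$. The covering number of a pure $j$-dimensional set is simply not controlled by its top variation (or its Hausdorff measure) alone; it genuinely requires the lower-order variations $V_0,\ldots,V_{j-1}$ as well --- that is the whole content of Vitushkin's bound. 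Patching Step 3 by applying Theorem~\ref{thm:vitushkin} to each stratum instead just reintroduces the terms $V_i/\e^i$ with $i>j$ that you are trying to eliminate, so the stratification buys nothing. There is also a smaller problem in Steps 1--2: the inequality $V_i(\bar A)\le V_i(A)$ can fail, because a plane may meet $\bar A$ only in limit points of $A$ (take $A=(0,1]\times\{0\}$ and $P=\{x=0\}$, so that $\bar A\cap P$ has one component while $A\cap P=\emptyset$).

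The paper's proof is entirely different and much shorter: it does not stratify or invoke Crofton at all. Since $\dim A\le m<n$ the volume term vanishes and $\partial A=A$ (up to closure). One then inspects the proof of Theorem~\ref{thm:vitushkin} and observes that, for fixed $\e$, each $V_i$ is used only to count connected components of $A\cap P$ for a certain finite collection of affine $(n-i)$-planes $P$, and the argument survives replacing each $P$ by a sufficiently small parallel translate $P'$. For $i>m$ a generic translate of an $(n-i)$-plane is disjoint from the $\le m$-dimensional set $A$ by a dimension count, so every term with $i>m$ can be discarded outright. Any repair of your approach would need exactly this sort of perturbation argument, at which point the stratification and the Cauchy--Crofton detour become unnecessary.
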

\begin{proof}
  Note first that in this case $\partial A=A$. In the proof of
  Theorem~\ref{thm:vitushkin} for every fixed $\e$ the quantity
  $V_i(A)$ is in fact only used to estimate the number of connected
  components of the intersection $A\cap P$ where $P$ varies over a
  certain finite set of affine $(n-i)$-planes $P$. It is easy to see
  that the argument remains valid if one replaces each $P$ by its
  sufficiently small parallel translate $P'$. For $i>m$ we can choose
  these translates so that $A\cap P'=\emptyset$, and the statement
  follows.
\end{proof}

\begin{Cor}\label{cor:ball-selection}
  Let $r>0$ and $A\subset Q_r^n$ with $\dim A=m<n$. If
  \begin{equation}
    r\e^{-1} > \sqrt[n-m]{CV(A)}, \qquad C:=(m+1)2^{4n}.
  \end{equation}
  then there exists an $\e$-ball disjoint from
  $A$.
\end{Cor}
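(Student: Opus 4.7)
\medskip

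The plan is a simple pigeonhole argument: tile the ambient cube $Q_r^n$ by a regular grid of small cubes, use the preceding Vitushkin-type estimate (Corollary~\ref{cor:vitushkin}) to bound the number of tiles that meet $A$, and check that the constant $C=(m+1)2^{4n}$ in the hypothesis is just large enough to force the count to be strictly less than the total number of tiles. Any tile missed by $A$ then contains an inscribed ball of the required radius.

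Concretely, I would rescale by the factor $r^{-1}$ so that $A/r\subset Q_1^n$ with $\dim(A/r)=m$ and with the same $V_i$ (since connected components of plane sections are scale-invariant). Applying Corollary~\ref{cor:vitushkin} at the scale $\eta:=\e/r\in(0,1]$, the number of $\eta$-cubes needed to cover $A/r$ is at most
\begin{equation}
  \sum_{i=0}^m 2^i\binom{n}{i}V_i(A)\,\eta^{-i}\le V(A)(r/\e)^m\sum_{i=0}^m 2^i\binom{n}{i}\le (m+1)\,2^{2n}\,V(A)\,(r/\e)^m,
\end{equation}
using $r/\e\ge1$ and the crude bound $2^i\binom{n}{i}\le 2^{2n}$. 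On the other hand, a standard grid tiling partitions $Q_r^n$ into at least $\lfloor r/(2\e)\rfloor^n\ge (r/\e)^n/2^{2n}$ axis-aligned cubes of side $2\e$ (and correspondingly at least that many $\e$-cubes), each of which contains an inscribed $\e$-ball. So it suffices to verify that the covering count above is strictly less than $(r/\e)^n/2^{2n}$, i.e.\
\begin{equation}
  (r/\e)^{n-m}>(m+1)\,2^{4n}\,V(A)=C\,V(A),
\end{equation}
which is exactly the assumption. Then some grid cube of side $2\e$ is disjoint from $A$, and the $\e$-ball inscribed in that cube is the one we want.

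There is no real obstacle here; the only thing to be slightly careful about is the mild discrepancy between ``$\e$-ball'' in the statement and ``$\e$-cube'' in the definition of $M(\e,\cdot)$, which is precisely what the extra factor $2^{2n}$ in $C=(m+1)2^{4n}$ (as opposed to the $(m+1)2^{2n}$ one would get from the entropy estimate alone) is designed to absorb via passage from $2\e$-grid cubes to inscribed $\e$-balls. All other constants come from the routine bound $\sum_{i=0}^m 2^i\binom{n}{i}\le (m+1)2^{2n}$ and from using $(r/\e)^i\le (r/\e)^m$ for $i\le m$.
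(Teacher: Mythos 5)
Your overall strategy (rescale, apply Corollary~\ref{cor:vitushkin}, pigeonhole against a grid) is the same as the paper's, but the pigeonhole step as you state it has a genuine gap. The quantity $M(\e,A)$ counts \emph{arbitrary} translates of $Q_\e^n$ in a minimal cover, not grid-aligned tiles, so a single covering $\e$-cube may contain points of $A$ lying in as many as $2^n$ different grid tiles of side $2\e$ (it can straddle one grid hyperplane in each coordinate direction). Consequently the implication ``$M(\e,A)$ is strictly less than the number of tiles, hence some tile is disjoint from $A$'' is false as stated: if $A$ consists of $2^n$ points clustered within distance $\e/10$ of a grid corner, one in each adjacent tile, then $M(\e,A)=1$ while $A$ meets $2^n$ tiles. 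The correct comparison is (number of tiles met by $A$) $\le 2^n\,M(\e,A)$, and inserting this factor into your computation forces
\begin{equation}
  (r/\e)^{n-m} > (m+1)\,2^{5n}\,V(A),
\end{equation}
which is weaker than the stated hypothesis with $C=(m+1)2^{4n}$. So your argument, once patched in the obvious way, proves the corollary only with a larger constant.

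The paper closes this hole with a separation argument rather than a tiling: it picks at least $(4\e)^{-n}$ centers in $Q_1^n$ with pairwise $\ell_\infty$-distance at least $4\e$ and considers the $\e$-balls around them. If $A$ met every such ball, then by the triangle inequality no single $\e$-cube of a cover could touch two of these balls, so $M(\e,A)\ge(4\e)^{-n}$; comparing with the entropy bound $M(\e,A)\le 2^{2n}(m+1)V\e^{-m}$ gives exactly $\e^{-(n-m)}\le (m+1)2^{4n}V$, i.e.\ the stated constant. If you want to keep your tiling picture, you must either absorb the extra $2^n$ (and accept a worse $C$) or replace ``tile disjoint from $A$'' by ``$\e$-ball around a well-separated center disjoint from $A$'' as above. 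A minor additional point: you should also note that the hypothesis forces $r/\e>4$, so your bound $\lfloor r/(2\e)\rfloor^n\ge (r/\e)^n/2^{2n}$ is legitimate, and that Corollary~\ref{cor:vitushkin} is being applied to a (sub)analytic set, which is the setting in which $V_i(A)$ is finite.
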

\begin{proof}
  Since the claim is invariant under rescaling we may assume $r=1$.
  Let $S\subset Q_1^n$ be a set of at least $(4\e)^{-n}$ points with
  pairwise $\ell_\infty$ distances at least $4\e$: for instance one
  can choose a grid with $(4\e)^{-1}$ equally spaced points on each
  axis. Suppose $A$ touches the $\e$-ball $B_s$ around each point of
  $s\in S$. Then every $\e$-cover of $A$ by cubes must contain a cube
  that touches each $B_s$, and since an $\e$-cube cannot touch two
  such balls by the triangle inequality it follows that
  \begin{equation}
    (4\e)^{-n} \le M(\e,A) \le \sum_{i=0}^m 2^i \binom{n}{i} V_i(A)/\e^i
    \le 2^{2n}(m+1) \e^{-m} V
  \end{equation}
  and the conclusion follows.
\end{proof}

\section{The Pfaffian category, Entropy and Weierstrass polydiscs}
\label{sec:pfaffian}

\subsection{Pfaffian functions, semi-Pfaffian and sub-Pfaffian sets}

Let $U\subset\R^n$ be a domain. We denote the coordinates on $\R^n$ by
$\vx$. The following definition, which plays a key role in our
considerations, was introduced by Khovanskii in
\cite{khovanskii:fewnomials} (see also \cite{gv:complexity}).

\begin{Def}
  A \emph{Pfaffian chain} of order $\ell$ and degree $\alpha$ is a sequence
  of function $f_1,\ldots,f_\ell:U\to\R$, real analytic in $U$ and satisfying
  a triangular system of differential equations
  \begin{equation}
    \d f_j = \sum_{i=1}^n P_{i,j}(\vx,f_1(\vx),\ldots,f_j(\vx)) \d x_i, \qquad j=1,\ldots,\ell
  \end{equation}
  where $P_{ij}$ are polynomials of degrees not exceeding $\alpha$. A function
  $f:U\to\R$ of the form $f(\vx)=P(\vx,f_1(\vx),\ldots,f_\ell(\vx))$ where $P$ is a polynomial
  of degree not exceeding $\beta$ is called a \emph{Pfaffian function}
  of \emph{order} $\ell$ and \emph{degree} $(\alpha,\beta)$.
\end{Def}


The following Pfaffian analog of the Bezout theorem, due to Khovanskii
\cite{khovanskii:fewnomials}, is the basis for the theory of Pfaffian
functions and sets.

\begin{Thm}\label{thm:pfaff-points}
  Let $f_1,\ldots,f_n:U\to\R$ be Pfaffian functions with a common
  Pfaffian chain of order $\ell$ and $\deg f_i=(\alpha,\beta_i)$. Then the number
  of isolated points in $\{\vx\in U:f_1(\vx)=\cdots= f_n(\vx)=0\}$ does not exceed
  \begin{equation}
    2^{\ell(\ell-1)/2} \beta_1\cdots\beta_n
    (\min(n,\ell)\alpha+\beta_1+\cdots+\beta_n-n+1)^\ell.
  \end{equation}
\end{Thm}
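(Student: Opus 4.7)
The plan is to prove this by induction on the order $\ell$ of the Pfaffian chain, following Khovanskii's original argument. The base case $\ell=0$ is classical: here $f_1,\dots,f_n$ are genuine polynomials of degrees $\beta_i$, and Bezout gives the bound $\beta_1\cdots\beta_n$, which matches the claimed formula since $2^{0}=1$ and the last factor is raised to the power $0$.

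For the inductive step I would use a \emph{Rolle--Khovanskii} reduction to eliminate the last function $f_\ell$ of the chain. After a generic perturbation (so that appropriate submanifolds are smooth and transverse), consider the curve $V:=\{f_1=\cdots=f_{n-1}=0\}$, which is a real-analytic $1$-manifold away from a negligible set. Each isolated zero of the full system contributes a zero of the restriction $f_n|_V$, and between two such zeros on the same arc of $V$ Rolle's theorem forces a critical point of $f_n|_V$. This critical point is the common zero of an auxiliary Pfaffian equation $F=0$ together with the $f_1,\dots,f_{n-1}$, where $F$ is the Jacobian-type determinant expressing the degeneracy of $\d f_1\wedge\cdots\wedge\d f_n$. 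The crucial algebraic observation is that, using the Pfaffian equations $\d f_j=\sum P_{i,j}\d x_i$, one can rewrite $F$ so that it is a polynomial in $\vx,f_1,\dots,f_\ell$ of bounded degree; and by choosing $f_n$ cleverly (e.g. augmenting $f_n$ to depend on $f_\ell$ in a controlled way, or moving $f_\ell$ to the right-hand side via its defining differential equation) one can arrange that $F$ in fact depends only on $\vx,f_1,\dots,f_{\ell-1}$. The number of non-compact arcs of $V$ contributes an additive correction bounded by the count for the chain of length $\ell-1$ and one lower $n$, which the induction absorbs.

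The bookkeeping then proceeds as follows. Let $N(\ell;\beta_1,\dots,\beta_n;\alpha)$ denote the claimed bound. The Rolle step gives roughly
\begin{equation}
N(\ell;\beta_1,\dots,\beta_n;\alpha)\le N(\ell-1;\beta_1,\dots,\beta_{n-1},\beta_n';\alpha)+N(\ell-1;\beta_1,\dots,\beta_{n-1};\alpha),
\end{equation}
where $\beta_n'$ is the degree of the new polynomial $F$ in the reduced chain, and a short calculation shows $\beta_n'\le\beta_n+\alpha-1$ in the regime $\ell\ge n$ (and $\beta_n'\le\beta_n+\beta_\ell-1$ when $\ell<n$, which accounts for the $\min(n,\ell)\alpha$ in the final formula). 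Iterating the recursion $\ell$ times accumulates a factor of $2^{\ell(\ell-1)/2}$ (one factor of $2$ at step $i$ multiplied into the previous bound of size $\sim 2^{i-1}$) and raises the linear-in-degree factor to the $\ell$-th power, yielding the stated expression.

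The main obstacles are, first, verifying that the chain order really drops by one in the reduction (i.e.\ that $F$ can genuinely be written without $f_\ell$), which requires the triangular form of the Pfaffian system in an essential way; and second, the degree bookkeeping for $F$, where one must use $\min(n,\ell)$ rather than $\ell$ or $n$ because the number of times the factor $\alpha$ enters depends on whether the Pfaffian chain or the equation-count is the binding constraint. A final technical point is the treatment of non-compact components of the intermediate varieties: one handles these by a standard compact-exhaustion argument inside $U$, replacing each non-compact end by a boundary contribution that is absorbed into the inductive estimate.
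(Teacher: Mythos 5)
First, a point of reference: the paper does not prove this statement at all --- it is quoted as Khovanskii's theorem with a citation to \cite{khovanskii:fewnomials} and used as a black box. So your sketch is to be compared with Khovanskii's original argument, which it is attempting to reconstruct. The global shape you describe (induction on $\ell$, Bezout as the base case, a Rolle--Khovanskii reduction, bookkeeping that accumulates $2^{\ell(\ell-1)/2}$ and the $\ell$-th power of a linear expression in the degrees) is indeed the shape of that argument.

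However, the central step as you describe it does not work, and you have essentially flagged the gap yourself without closing it. If you take the curve $V=\{f_1=\cdots=f_{n-1}=0\}$ inside $U\subset\R^n$ and form the degeneracy condition $F$ for $f_n|_V$ (the Jacobian-type determinant), then after substituting $\d f_j=\sum_i P_{i,j}\d x_i$ the function $F$ is a polynomial in $\vx$ and \emph{all} of the chain functions, including the last one: the derivatives of any equation that depends on the $\ell$-th chain function bring in $P_{i,\ell}(\vx,f_1,\dots,f_\ell)$, and no ``clever choice of $f_n$'' or use of the triangular structure removes this dependence in general. So the order of the chain does not drop and the induction does not close. Khovanskii's actual device is different: one introduces the last chain function as a \emph{new independent variable} $u$, so that one works in $\R^{n+1}$ with the shorter chain $f_1,\dots,f_{\ell-1}$ (which does not involve $u$), the original equations become polynomials in $(\vx,u)$ and this shorter chain, and the graph of $f_\ell$ is a separating leaf of the $1$-form $\omega=\d u-\sum_i P_{i,\ell}(\vx,f_1,\dots,f_{\ell-1},u)\,\d x_i$. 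The Rolle--Khovanskii lemma is then applied to this codimension-one leaf: the count on the leaf is bounded by the count, in all of $\R^{n+1}$, of the $n$ equations together with the characteristic equation $\delta=0$, where $\delta$ is the $(n+1)\times(n+1)$ determinant built from $\d\tilde f_1,\dots,\d\tilde f_n$ and $\omega$; this $\delta$ genuinely involves only the chain of order $\ell-1$. Consequently your recursion has the wrong shape as well: the number of equations and variables \emph{increases} from $n$ to $n+1$ as $\ell$ decreases, the degree of the new equation is controlled by a quantity of the form $\min(n,\ell)\alpha+\beta_1+\cdots+\beta_n-n+1$ involving all the $\beta_i$ (not $\beta_n+\alpha-1$), and your explanation of the $\min(n,\ell)$ via ``$\beta_\ell$'' conflates the degrees of the equations with the chain functions. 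As written, the proposal would not yield the stated bound.
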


We now move to the notion of semi-Pfaffian and sub-Pfaffian sets. For
this purpose we restrict our consideration to domains of the form
$\prod_{j=1}^n I_j$ where each $I_j$ is an open, possibly unbounded
interval in $\R$. By a slight abuse of notation we denote this product
by $\cI^n$. We will write $X^n:=(X_1,\ldots,X_n)$ for a set a variables
ranging over $\cI^n$.

\begin{Def}\leavevmode 
  \begin{itemize}
  \item A \emph{basic Pfaffian relation} on $\cI^n$ is a relation
    $f(X^n)*0$ where $*\in\{=,>\}$ and $f$ is a Pfaffian function on
    $\cI^n$.
  \item A \emph{semi-Pfaffian formula} $\phi(X^n)$ is a Boolean
    combination of basic Pfaffian relations. We say that $\phi$ has
    complexity $(n,s,\ell,\alpha,\beta)$ if it involves $s$ basic
    Pfaffian relations, where all the Pfaffian functions have degree
    at most $\beta$ in a common Pfaffian chain of order $\ell$ and
    degree $\alpha$.
  \item A \emph{sub-Pfaffian formula} is a formula of the form
    $\phi(X^n):=\exists Y^r:\psi(X^n,Y^r)$ where $\psi(X^n,Y^r)$ is a
    semi-Pfaffian formula on $\cI^{n+r}$. The complexity of $\phi$ is
    defined to be $(n,r,s,\ell,\alpha,\beta)$ where $\psi$ has
    complexity $(n+r,s,\ell,\alpha,\beta)$.
  \end{itemize}
  If a formula is semialgebraic then we omit $\ell,\alpha$ from the
  complexity notation (formally $\ell=\alpha=0$).
\end{Def}

We write $\phi(\cI^n)$ for the set of points in $\cI^n$ satisfying
$\phi$, and refer to such sets as semi-Pfaffian (resp. sub-Pfaffian)
for $\phi$ semi-Pfaffian (resp. sub-Pfaffian). The categories of
semi-Pfaffian and sub-Pfaffian sets thus defined admit effective
estimates for various geometric quantities in terms of the complexity
of the formulas. We will require only estimates for the number of
connected components, which are provided by the following theorem.

\begin{Thm}[\protect{\cite[Theorem~6.6]{gv:compact-approx}}]\label{thm:pfaffian-complexity}
  If $\phi$ is semi-Pfaffian of complexity $(n,s,\ell,\alpha,\beta)$
  then the number of connected components of $\phi(\cI^n)$ is bounded
  by
  \begin{equation}
    s^n 2^{\ell(\ell-1)/2} O(n\beta+\min(n,\ell)\alpha)^{n+\ell}.
  \end{equation}
  Similarly if $\phi$ is sub-Pfaffian of complexity
  $(n,r,s,\ell,\alpha,\beta)$ then the number of connected components
  of $\phi(\cI^n)$ is bounded by
  \begin{equation}
    s^{n+r} 2^{\ell(\ell-1)/2} O((n+r)\beta+\min(n,\ell)\alpha)^{n+r+\ell}.
  \end{equation}
\end{Thm}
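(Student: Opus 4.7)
The plan is to derive both bounds from Khovanskii's Bezout-type estimate (Theorem~\ref{thm:pfaff-points}) via a Morse-theoretic count of connected components, treating the semi-Pfaffian case first and then reducing the sub-Pfaffian case to it by a compact approximation argument.

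For the semi-Pfaffian case, I would first put $\phi$ into disjunctive normal form so that $\phi(\cI^n)$ becomes a union over $\poly(s)$ sign-condition sets of the form $X=\{f_1=\cdots=f_q=0,\ g_1>0,\ldots,g_p>0\}$; the combinatorics of sign conditions contributes the factor $s^n$. For each such $X$, I would bound the number of its connected components by the number of critical points of a generic linear function $\ell$ restricted to $X$: each component contributes at least one critical point of $\ell$ after a suitable closed compactification, and each such critical point is an isolated zero of an enlarged Pfaffian system obtained by adjoining to the $f_i$ the vanishing of appropriate maximal minors of the Jacobian of $(f_i,\ell)$. Applying Theorem~\ref{thm:pfaff-points} to this system produces a bound of the announced form, with the factor $(n\beta+\min(n,\ell)\alpha)^{n+\ell}$ arising as the product of the degrees of the Pfaffian equations with the correction from the chain length.

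The technical care required at this stage concerns strict inequalities and non-compactness: one replaces $g_j>0$ by $g_j\ge\delta_j$ for small generic $\delta_j>0$, intersects with a large closed box inside $\cI^n$, and verifies that for $\delta$ sufficiently small the perturbed compact semi-Pfaffian set has at least as many connected components as $X$. For the sub-Pfaffian case the essential new difficulty is that a projection $\phi(\cI^n)=\pi(\psi(\cI^{n+r}))$ of a semi-Pfaffian set is not itself semi-Pfaffian, so one cannot apply the preceding bound directly to $\phi(\cI^n)$. The device of Gabrielov and Vorobjov is to build a \emph{compact} semi-Pfaffian set $\widetilde\psi\subset\cI^{n+r}$ of controlled complexity whose projection to $\cI^n$ realizes each connected component of $\phi(\cI^n)$ as the image of at least one connected piece of $\widetilde\psi$. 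Then the number of components of $\phi(\cI^n)$ is bounded by that of $\widetilde\psi$, and applying the semi-Pfaffian bound already proved, but now in $n+r$ variables, produces exactly the exponents $n+r$ and $n+r+\ell$ visible in the final estimate.

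The main obstacle is the construction of $\widetilde\psi$: one must simultaneously preserve the connected-component structure under projection, keep the additional Pfaffian complexity under control, and replace strict inequalities in $\psi$ by closed conditions whose perturbation parameters can be tuned uniformly. This is typically accomplished by an iterated ``relative closure'' construction together with a diagonal limiting argument to remove the auxiliary parameters. Once $\widetilde\psi$ has been built, the remainder of the argument is routine and amounts to bookkeeping of the exponents in Khovanskii's bound.
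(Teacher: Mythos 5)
Your treatment of the sub-Pfaffian case misses the elementary observation that makes it a one-line deduction, and replaces it with machinery whose hard parts you leave unconstructed. In the paper the first bound is simply quoted from Gabrielov--Vorobjov, and the second is deduced from the first by noting that the sub-Pfaffian set $\phi(\cI^n)=\pi(\psi(\cI^{n+r}))$ is the \emph{continuous image} of the semi-Pfaffian set $\psi(\cI^{n+r})$: continuous maps send connected sets to connected sets, so the number of connected components of $\pi(A)$ never exceeds that of $A$, and one just applies the semi-Pfaffian bound to $\psi(\cI^{n+r})$ in $n+r$ variables. Your claim that ``one cannot apply the preceding bound directly to $\phi(\cI^n)$'' identifies a difficulty that is real for higher Betti numbers or for frontiers and closures of sub-Pfaffian sets, but not for $b_0$ of a projection. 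The compact-approximation / relative-closure construction of $\widetilde\psi$ you propose instead is precisely the technical core of the Gabrielov--Vorobjov paper; asserting its existence ``with controlled complexity'' together with a ``diagonal limiting argument'' without carrying either out is a genuine gap, and even if completed it would at best reproduce, with worse dependence on $s$, what the trivial argument already gives.

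For the first part your sketch is the standard Khovanskii-style count (sign conditions contributing the $s^n$ factor, critical points of a generic linear functional on each basic set, perturbation $g_j\ge\delta_j$ and truncation to a large box to handle strictness and non-compactness, then Theorem~\ref{thm:pfaff-points}). This is consistent in outline with how the cited bound is actually proved, and since the paper treats that part as a black-box citation there is nothing to object to beyond the usual caveat that the perturbation and Morse-theoretic steps require care to land on the constants in the stated form.
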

\begin{proof}
  The first part is \cite[Theorem~6.6]{gv:compact-approx}. The second
  follows from the first since projection cannot increase the number
  of connected components.
\end{proof}

\begin{Cor}\label{cor:pfaffian-V}
  Let $\phi$ be sub-Pfaffian of complexity
  $(n,r,s,\ell,\alpha,\beta)$. Then $V(\phi(\cI^n))$ is bounded by a
  polynomial of degree at most $n+r+\ell$ in $\beta$.
\end{Cor}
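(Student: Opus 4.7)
The plan is to reduce directly to Theorem~\ref{thm:pfaffian-complexity}, using the observation that intersecting a sub-Pfaffian set with an affine plane produces another sub-Pfaffian set whose complexity is controlled in terms of the original complexity and the codimension of the plane.

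First I would fix $i\in\{0,\ldots,n\}$ and an affine $(n-i)$-plane $P\subset\R^n$. Such a plane is cut out by $i$ affine-linear equations in $X^n$, i.e. $i$ basic semialgebraic relations of degree $1$. If $\phi(X^n):=\exists Y^r\:\psi(X^n,Y^r)$ witnesses the sub-Pfaffian complexity of $\phi$, then
\begin{equation}
  \phi_P(X^n):=\exists Y^r\:\bigl(\psi(X^n,Y^r)\wedge X^n\in P\bigr)
\end{equation}
defines $\phi(\cI^n)\cap P$ and is sub-Pfaffian of complexity $(n,r,s+i,\ell,\alpha,\max(\beta,1))$: we have added $i$ polynomial relations to $\psi$ without touching the underlying Pfaffian chain or increasing the maximum Pfaffian degree (which we may assume is $\ge1$).

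Next I would apply Theorem~\ref{thm:pfaffian-complexity} to $\phi_P$. The number of connected components of $\phi(\cI^n)\cap P=\phi_P(\cI^n)$ is then bounded by
\begin{equation}
  (s+i)^{n+r}\,2^{\ell(\ell-1)/2}\,O\!\bigl((n+r)\beta+\min(n,\ell)\alpha\bigr)^{n+r+\ell}.
\end{equation}
Viewed as a function of $\beta$ with the remaining parameters $n,r,s,\ell,\alpha,i$ held fixed, expanding the power $\bigl((n+r)\beta+\min(n,\ell)\alpha\bigr)^{n+r+\ell}$ shows this expression is a polynomial in $\beta$ of degree exactly $n+r+\ell$. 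Maximizing over $i\in\{0,\ldots,n\}$ and over affine $(n-i)$-planes $P$ gives the same bound for $V_i(\phi(\cI^n))$, and hence for $V(\phi(\cI^n))=\max_i V_i(\phi(\cI^n))$.

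There is no real obstacle here beyond bookkeeping: the only point to verify carefully is that the auxiliary linear equations defining $P$ can be absorbed into the semi-Pfaffian formula without disturbing the Pfaffian chain or the parameters $\ell$ and $\alpha$, so that the $\beta$-dependence of the bound in Theorem~\ref{thm:pfaffian-complexity} is preserved. Once this is observed, the corollary follows immediately.
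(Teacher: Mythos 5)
Your proposal is correct and follows essentially the same route as the paper: the paper's proof likewise intersects with the additional linear equations defining the plane, counts connected components via Theorem~\ref{thm:pfaffian-complexity}, and reads off the degree $n+r+\ell$ in $\beta$. Your write-up just makes the bookkeeping (the new complexity tuple and the uniformity over all planes $P$) explicit.
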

\begin{proof}
  To estimate $V(\phi(\cI^n))$ we intersect with additional
  linear equations and count connected components. The result
  follows easily from Theorem~\ref{thm:pfaffian-complexity}.
\end{proof}

\subsection{Sub-Pfaffian sets and $\R^\RE$}

The restricted exponential and sine functions are Pfaffian. As a
consequence we have the following proposition.

\begin{Prop}\label{prop:Rre-subPfaffian}
  Every $\R^\RE$-definable subset of $\R^n$ is sub-Pfaffian.
\end{Prop}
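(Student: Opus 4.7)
The plan is to prove this by structural induction on the defining formula of an $\R^\RE$-definable set, showing closure of the sub-Pfaffian category under the operations used to build such sets.

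For the base case, I would verify that the graphs of the restricted exponential and sine are semi-Pfaffian. Indeed, on the open strip $(-\tfrac12,\tfrac32)$ the function $e^x$ forms a Pfaffian chain of order $1$ with $d(e^x)=e^x\,dx$, so the graph of $\exp\rest{[0,1]}$ is cut out, inside a suitable semi-algebraic box, by a Pfaffian equation together with semi-algebraic inequalities $0\le x\le 1$; similarly, on the strip $(-\tfrac{\pi}{2}, \tfrac{3\pi}{2})$ the pair $(\sin x,\cos x)$ forms a Pfaffian chain of order $2$ with $d\sin x=\cos x\,dx$ and $d\cos x=-\sin x\,dx$, so the graph of $\sin\rest{[0,\pi]}$ is semi-Pfaffian. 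Consequently every atomic $\R^\RE$-formula, namely a polynomial (in)equality whose terms may involve evaluations of these restricted functions at polynomial sub-expressions, can be rewritten as the projection of a semi-Pfaffian formula by introducing one auxiliary variable per restricted-function evaluation and adding the corresponding graph relations.

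Closure at the quantifier-free level is straightforward: the class of semi-Pfaffian formulas is closed under conjunction, disjunction, and negation (the negation of $f=0$ is $f>0\lor -f>0$ and the negation of $f>0$ is $f=0\lor -f>0$). Existential quantifiers correspond to projections, which preserve sub-Pfaffiness by definition. The delicate point is universal quantification, equivalently the complement of a sub-Pfaffian set, which is not an immediate operation in the sub-Pfaffian category.

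The cleanest way to deal with complements, and the step I expect to be the main obstacle, is to invoke an available quantifier-elimination (or model-completeness) result for $\R^\RE$. The natural choice is van den Dries's theorem \cite{vdd:Rre}, discussed in the overview: every $\R^\RE$-definable subset of $I^m=[-1,1]^m$ is defined by a quantifier-free formula in an enriched language $L^D_\RE$ whose special function symbols all extend to holomorphic-Pfaffian functions on a neighborhood of $I^m$ (hence are semi-Pfaffian after taking real parts) and whose one additional binary symbol, the restricted division $D$, is piecewise semi-algebraic, hence sub-Pfaffian. Granted this, the universal/complement case collapses to the Boolean-and-existential case treated above. Finally, to pass from $I^m$ to $\R^n$, I would cover $\R^n$ by finitely many semi-algebraic charts on each of which the given definable set admits a bounded description --- for instance via a coordinate inversion $\R\to(-1,1)$, $t\mapsto t/\sqrt{1+t^2}$ --- apply the bounded case chart by chart, and use that a finite union of sub-Pfaffian sets is sub-Pfaffian.
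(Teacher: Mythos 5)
Your overall strategy matches the paper's: reduce to a quantifier-free existential formula via van den Dries, introduce one auxiliary variable per occurrence of a restricted function, and check that the graphs of the restricted exponential and sine are (semi-/sub-)Pfaffian. However, the specific form of van den Dries's theorem you invoke creates a circularity. You propose to use the quantifier elimination in the enriched language $L^D_\RE$, and you justify this by asserting that its special function symbols ``extend to holomorphic-Pfaffian functions on a neighborhood of $I^m$.'' But those function symbols range over \emph{all} $f\in\R^\RE\{X_1,\ldots,X_m\}$, i.e.\ power series whose graphs are merely required to be locally $\R^\RE$-definable; the only way to see that such graphs are sub-Pfaffian is to already know the proposition you are proving (indeed, the paper derives the sub-Pfaffianity of these $f$ \emph{from} Proposition~\ref{prop:Rre-subPfaffian}). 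The repair is to invoke instead the model-completeness form of van den Dries's result in the \emph{original} language: every $\R^\RE$-definable subset of $\R^n$ is $\{\vx:\exists Y^m\,\psi(\vx,Y^m)\}$ with $\psi$ quantifier-free in $(<,+,\cdot,\exp\rest{[0,1]},\sin\rest{[0,\pi]})$. This is exactly what the paper does, and it makes your worry about complements and universal quantifiers disappear without any appeal to $L^D_\RE$.

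A second, more minor point: the graph of $\sin\rest{[0,\pi]}$ as a total function on $\R$ contains the locus $\{(x,0):x\notin[0,\pi]\}$, and $\sin$ is not Pfaffian on all of $\R$, so this graph cannot be written as a semi-Pfaffian set on $\R^2$ with your chain on the strip; it must be presented as a \emph{sub}-Pfaffian set, i.e.\ a projection from $\R^2\times[0,\pi]$ (union with the semialgebraic piece outside $[0,\pi]$). Your device of adding an auxiliary variable per restricted-function evaluation accomplishes this provided the auxiliary variable is constrained to range over $[0,\pi]$; the paper spells this out with the explicit formula $\phi_{\sin}$. For the exponential no such care is needed since $e^x$ is Pfaffian on all of $\R$.
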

\begin{proof}
  By the main result of \cite{vdd:Rre} every $\R^\RE$-definable subset
  of $\R^n$ is definable by a formula of the type
  \begin{equation}
    \phi(X^n) := \exists Y^m : \psi(X^n,Y^m)
  \end{equation}
  where $\psi$ is a quantifier-free $\R^\RE$-formula (in fact one can
  replace $\exists$ by ``exists a unique'', although we shall not use
  this fact). By adding additional variables $Y$ one can also assume
  that the function symbols $\exp,\sin$ only appear in the form
  $\exp(Y_j),\sin(Y_j)$: by induction on the construction tree of each
  term we replace every occurrence of $\exp(T)$ for a term $T$ by
  $\exp(Y_j)$ for some new variable $Y_j$, and add the condition
  $Y_j=T$ to $\psi$ (and similarly for $\sin$). Then it will follow
  that $\psi$ is equivalent to a sub-Pfaffian formula once we show
  that the graph of the restricted exponential and sine functions is
  sub-Pfaffian.

  It is known that the function $\sin(z)$ is Pfaffian in the interval
  $[0,\pi]$. We claim that the graph of the restricted sine,
  $X_2=\sin\rest{[0,\pi]}(X_1)$ in $\cI^2:=\R^2$ is sub-Pfaffian. Note
  that in defining this graph we may not use the function $\sin(X_1)$,
  since $\sin$ is not a Pfaffian function in $\R$. To resolve this
  minor technicality we define the graph by a projection from
  $\cI^3:=\R^2\times[0,\pi]$ using the sub-Pfaffian formula
  \begin{multline}
    \phi_{\sin}(X_1,X_2) := [(X_1<0\lor X_1>\pi)\land X_2=0]\lor\\
    [\exists Y\in[0,\pi]:(X_1=Y\land X_2=\sin(Y))]
  \end{multline}
  where $\sin Y$ is a Pfaffian function over $[0,\pi]$. The restricted
  exponential function can be treated similarly (in fact here it is
  not necessary to add the additional variable over $[0,1]$ because
  $\exp$ is Pfaffian in $\R$ itself).
\end{proof}

\subsection{Pfaffian functions in the complex domain}

We return now to the complex setting. We fix some standard
coordinates $\vx$ on $\C^n$ and identify $\C^n$ with
$\R^{2n}$ by the map
\begin{equation}
  (\vx_1,\ldots,\vx_n)\to(\Re\vx_1,\Im\vx_1,\ldots,\Re\vx_n,\Im\vx_n).
\end{equation}
Since we work in $\C^n$ it will be convenient to allow unitary changes
of variables. To make this consistent with the Pfaffian framework we
consider the following setting. We let $U$ denote some fixed ball
around the origin, and we will assume that our Pfaffian chain is
defined over $U$. We then let $\cI^{2n}$ denote some product of
intervals and assume $A\cdot\cI^{2n}\subset U$ for any unitary $A$.
Finally we will always work with sub-Pfaffian sets contained in a ball
$B\subset\cI^n$ and we assume that the formulas explicitly contain the
condition $\vx\in B$. Under these assumptions we can make a constant
unitary change of variable in a Pfaffian formula without affecting the
complexity: if $\{f_j(\vx)\}$ is a Pfaffian chain then by the chain
rule $\{f_j(A\cdot\vx)\}$ is a Pfaffian chain of the same order $\ell$
and degree $\alpha$. If the coefficients of $A$ are taken to be
independent variables then this transformation increases the degree
$\alpha$ by $1$.

Our main result in this subsection is a theorem showing that if an
analytic set $X$ in a ball $B\subset\C^n$ is sub-Pfaffian, then one
can choose a Weierstrass polydisc for $X$ with size depending
polynomially on $\beta$. Since we are mainly concerned with the
asymptotic in $\beta$, we allow the asymptotic constants to depend on
all other parameters. In particular when dealing with formulas of
complexity $(2n,r,s,\ell,\alpha,\beta)$ we view all parameters except
$\beta$ as $O(1)$.

We will require a slight technical extension of the notion of
Weierstrass polydiscs.

\begin{Def}
  Suppose $\Delta:=\Delta_z\times\Delta_w$ is a Weierstrass polydisc
  for an analytic set $X$. If $B\subset\C^n$ is a Euclidean ball
  around the origin, we will say that $\Delta$ has \emph{gap} $B$ if
  $\Delta_z\times\partial\Delta_w$ is disjoint from the set $B+X$.
\end{Def}

We begin with a lemma in codimension one.

\begin{Lem}\label{lem:pfaff-polydisc-codim1}
  Let $B\subset\C^n$ be a Euclidean ball around the origin. Let
  $X\subset B$ be an analytic subset of pure dimension $m$. Suppose
  $X$ is defined by a sub-Pfaffian formula $\phi$ of complexity
  $(2n,r,s,\ell,\alpha,\beta)$. Then there exists a pre-Weierstrass
  polydisc $\Delta:=\Delta_z\times\Delta_w$ centered at the origin for
  $X$ where $\dim\vw=1$ and $B^\eta\subset\Delta\subset B$ where
  \begin{align}
      \eta &:= O(\beta^\nu) &
      \nu = \nu(n,m,r,\ell) &:= \frac{2n+r+\ell+2}{2n-2m-1}.
  \end{align}
  Moreover $\Delta$ can be chosen to have gap $B^\eta$.
\end{Lem}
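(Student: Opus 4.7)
The plan is to exploit the scalar $S^1$-action on $\C^n$ given by $\zeta\cdot x=\zeta x$, and reduce to finding a Euclidean ball $B(p,\delta)\subset B$ of radius $\delta\asymp r/\eta$ disjoint from $\tilde X:=S^1\cdot X$, with $|p|\asymp r$. I will then align the fiber axis $\vw$ with $p/|p|$, so that the boundary circle $\partial\Delta_w=\{|w|=|p|\}$ becomes precisely the $S^1$-orbit of $p$, and inscribe $\bar\Delta_z\times\partial\Delta_w$ inside the $S^1$-orbit of $B(p,\delta)$.

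The key reduction is the observation that for any standard coordinates $(\vz,w)$ with $\dim\vw=1$ and any polydisc $\Delta=\Delta_z\times\Delta_w$ centered at the origin, the set $\bar\Delta_z\times\partial\Delta_w$ is cut out by conditions on the moduli $|z_i|$ and $|w|$ alone, and is therefore invariant under the scalar $S^1$-action. Consequently its Euclidean distance to $X$ equals its distance to $\tilde X$, so the gap condition $\bar\Delta_z\times\partial\Delta_w\cap(B^\eta+X)=\emptyset$ becomes $d(\bar\Delta_z\times\partial\Delta_w,\tilde X)>r/\eta$, where $r$ is the radius of $B$. Writing $y\in\tilde X$ as $\exists\zeta\in S^1:\zeta^{-1}y\in X$ and absorbing the substitution $x=\zeta^{-1}y$ into the Pfaffian functions of $\phi$ via the variable-coefficient unitary substitution framework developed at the start of this subsection, $\tilde X$ is sub-Pfaffian of complexity $(2n,r+2,s+1,\ell,\alpha+1,\beta)$, has real dimension at most $2m+1$, and by Corollary~\ref{cor:pfaffian-V} satisfies $V(\tilde X)=O(\beta^{2n+r+\ell+2})$.

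Fix an axis-aligned cube $Q\subset B$ of side $\asymp r$ all of whose points satisfy $|p|\in[r/2,3r/4]$, and set $\delta:=(\sqrt{n-1}+2)r/\eta$. Since $\dim_\R\tilde X\le 2m+1<2n$, Corollary~\ref{cor:ball-selection} (applied after translating $Q$ to the origin) produces a ball $B(p,\delta)\subset Q$ disjoint from $\tilde X$ as soon as $\eta\ge C_n\beta^\nu$ for a suitable constant $C_n=C_n(n,m,r,\ell,\alpha)$. Choose a unitary basis completing $e_w:=p/|p|$, use the resulting standard coordinates, and take $\Delta_z$ to be the polydisc with polyradii $\rho_1=\cdots=\rho_{n-1}=r/\eta$ and $\Delta_w:=\{|w|<|p|\}$. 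The $S^1$-orbit of $B(p,\delta)$ is the tube $T_\delta=\{(z,w):|z|^2+(|w|-|p|)^2<\delta^2\}$; points of $\bar\Delta_z\times\partial\Delta_w$ have $|z|\le\sqrt{n-1}\cdot r/\eta=\delta-2r/\eta$ and $|w|=|p|$, so, using that $(z,w)\mapsto(|z|,|w|)$ is $1$-Lipschitz, the full $r/\eta$-neighbourhood of $\bar\Delta_z\times\partial\Delta_w$ stays inside $T_\delta$, hence disjoint from the $S^1$-invariant set $\tilde X$. This yields the gap condition, and the sandwich $B^\eta\subset\Delta\subset B$ follows from $\rho_i=r/\eta$, $|p|\ge r/2\ge r/\eta$, and $(n-1)(r/\eta)^2+|p|^2<r^2$ for $\eta$ large.

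The main technical hurdle is the complexity bookkeeping for $\tilde X$: a naive parametrization by $(\zeta,x,Y^r)$ would add $2n+2$ existential variables and degrade the exponent to $(4n+r+\ell+2)/(2n-2m-1)$. Obtaining the stated $\nu=(2n+r+\ell+2)/(2n-2m-1)$ forces one to substitute $x=\zeta^{-1}y$ directly into the Pfaffian chain, which is precisely what the framework introduced at the start of this subsection permits, at the cost only of $r\mapsto r+2$ and $\alpha\mapsto\alpha+1$.
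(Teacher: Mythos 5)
Your proposal is correct and follows essentially the same route as the paper: saturate $X$ under the scalar $S^1$-action, bound $V(S^1\cdot X)$ via Corollary~\ref{cor:pfaffian-V} (with the same complexity bookkeeping — only two new existential variables for $\zeta$ and a degree increment for the variable-coefficient substitution), locate a disjoint ball in an annular cube via Corollary~\ref{cor:ball-selection}, and rotate so that the ball's center lies on the $\vw$-axis. The only cosmetic difference is that you secure the gap by enlarging $\delta$ and a Lipschitz/tube computation, whereas the paper shrinks $B_v$ so as to avoid $Z+B^{O(\beta^\nu)}$; these are equivalent.
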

\begin{proof}
  We may assume without loss of generality that $B$ is the unit ball.
  The group $S_1:=\{\zeta\in\C:|\zeta|=1\}$ acts on $B$ by
  multiplication. We consider $Z:=S^1\cdot X$, the $S^1$-saturation of
  $X$. Then $Z$ can be defined as a sub-Pfaffian set using the formula
  \begin{equation}
    \psi(\vx) := \exists(\zeta\in\C) : (|\zeta|^2=1)\land\phi(\zeta\cdot y).
  \end{equation}
  of complexity $(2n,r+2,O(1),\ell,O(1),\beta)$. By
  Corollary~\ref{cor:pfaffian-V} we have
  \begin{equation}
    V(Z)=O(\beta^{2n+r+\ell+2}).
  \end{equation}
  Note that $Z$ has real dimension at most $2m+1$. Then according to
  Corollary~\ref{cor:ball-selection} applied to $Z$ in the cube
  $Q:=[1/4n,1/2n]^{2n}\subset B$ there exists a ball $B_v\subset Q$
  with center $v\in Q$ and radius $\Omega(\beta^{-\nu})$ such that
  $B_v\cap Z=\emptyset$. Equivalently,
  $(S^1\cdot B_v)\cap X=\emptyset$.
  
  Making a unitary change of coordinates we may assume that in the
  $\vx=\vz\times\vw$ coordinates $v$ is given by $(0,\l)$ where
  $|\l|=\Omega(1)$. Let $\Delta_w$ denote the disc of radius $|\l|$
  around the origin in the $\vw$ coordinate and $\Delta_z$ denote a
  polydisc of polyradius $\Omega(\beta^{-\nu})$ around the origin in
  the $\vz$ coordinates with $v+\Delta_z\subset B_v$. Since $\Delta_z$
  is invariant under the $S^1$ action,
  \begin{equation}
    \Delta_z\times\partial\Delta_w = (S^1\cdot v)+\Delta_z = S^1\cdot(v+\Delta_z) \subset  S^1\cdot B_v
  \end{equation}
  is disjoint from $X$, i.e. $\Delta:=\Delta_z\times\Delta_w$ is a
  Weierstrass polydisc for $X$. Finally, since each radius of $\Delta$
  is $\Omega(\beta^{-\nu})$ we have $B^\eta\subset\Delta$ for
  $\eta=O(\beta^{-\nu})$ as claimed.

  To satisfy the gap condition it is enough to choose $B_v$ to be
  disjoint from $Z+B^{O(\beta^\nu)}$ instead of $Z$. This is clearly
  possible for the same reasons: for instance it is enough to decrease
  the radius of $B_v$ by a factor of two.
\end{proof}

We now state our main result.

\begin{Thm}\label{thm:pfaff-polydisc}
  Let $B\subset\C^n$ be a Euclidean ball around the origin. Let
  $X\subset B$ be an analytic subset of pure dimension $m$. Suppose
  $X$ is defined by a sub-Pfaffian formula $\phi$ of complexity
  $(2n,r,s,\ell,\alpha,\beta)$. Then there exists a Weierstrass
  polydisc $\Delta:=\Delta_z\times\Delta_w$ centered at the origin for
  $X$ where $B^\eta\subset\Delta\subset B$ and
  \begin{align}
      \eta &:= O(\beta^\theta) &
      \theta = \theta(n,m,r,\ell) &\le (2n+r+\ell+2)\log (2n-2m+1).
  \end{align}
  Moreover $\Delta$ can be chosen to have gap $B^\eta$.
\end{Thm}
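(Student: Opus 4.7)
The plan is to induct on the complex codimension $k := n - m$. The base case $k = 1$ is precisely Lemma~\ref{lem:pfaff-polydisc-codim1}: there one obtains exponent $\nu = 2n + r + \ell + 2$, which is at most $(2n+r+\ell+2)\log 3$ for the natural logarithm.

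For the inductive step $k > 1$, I will first apply Lemma~\ref{lem:pfaff-polydisc-codim1} to produce a pre-Weierstrass polydisc $\tilde\Delta = \tilde\Delta_z \times \tilde\Delta_w$ for $X$ with $\dim\tilde\vw = 1$, satisfying $B^{\tilde\eta} \subset \tilde\Delta$ and having gap $B^{\tilde\eta}$, where $\tilde\eta = O(\beta^{\nu})$ and $\nu := (2n+r+\ell+2)/(2n-2m-1)$. By Fact~\ref{fact:weierstrass-proper} combined with Remmert's proper mapping theorem, the image $Y := \pi_z(X\cap\tilde\Delta)$ is an analytic subset of pure dimension $m$ in $\tilde\Delta_z \subset \C^{n-1}$, defined by the sub-Pfaffian formula $\exists\tilde\vw\in\tilde\Delta_w : \phi(\vz,\tilde\vw)$ of complexity $(2(n-1), r+2, \ldots, \beta)$. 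Inscribing a Euclidean ball $B' \subset \tilde\Delta_z$ of radius comparable to $\tilde\eta^{-1}r(B)$ and invoking the inductive hypothesis (codimension $k-1$) for $Y$ in $B'$, I obtain a Weierstrass polydisc $\Delta_{z'} \times \Delta_{w'}$ for $Y$ centered at the origin, with $\dim\vw' = k-1$, $(B')^{\eta'} \subset \Delta_{z'} \times \Delta_{w'}$, gap $(B')^{\eta'}$, and $\eta' = O(\beta^{\theta(n-1,m,r+2,\ell)})$.

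I then combine these into $\Delta := \Delta_{z'} \times (\Delta_{w'} \times \tilde\Delta_w)$ in coordinates $\vx = \vz' \times (\vw' \times \tilde\vw)$, which is a candidate Weierstrass polydisc for $X$ since $\dim\vz' = m$. To verify the Weierstrass condition I decompose
\begin{equation*}
\partial(\Delta_{w'} \times \tilde\Delta_w) = (\partial\Delta_{w'} \times \bar{\tilde\Delta}_w) \cup (\bar\Delta_{w'} \times \partial\tilde\Delta_w).
\end{equation*}
Any $x \in X$ in the first region projects under $\pi_z$ into $\bar\Delta_{z'} \times \partial\Delta_{w'}$ and (being in $\tilde\Delta$) lies in $Y$, contradicting the Weierstrass property of $Y$; the second region sits inside $\bar{\tilde\Delta}_z \times \partial\tilde\Delta_w$, disjoint from $X$ by the pre-Weierstrass property of $\tilde\Delta$. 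A direct radius comparison yields $B^{\tilde\eta\cdot\eta'} \subset \Delta$, hence the recursion $\theta(n,m,r,\ell) \le \nu(n,m,r,\ell) + \theta(n-1,m,r+2,\ell)$. Iterating with $\nu(n-j,m,r+2j,\ell) = (2n+r+\ell+2)/(2(n-m-j)-1)$ produces
\begin{equation*}
\theta(n,m,r,\ell) \le (2n+r+\ell+2)\sum_{i=1}^{n-m} \frac{1}{2i-1} \le (2n+r+\ell+2)\log(2n-2m+1),
\end{equation*}
the final inequality being elementary.

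The main technical obstacle I anticipate is propagating the gap condition correctly through the induction. A point $x+b$ with $x \in X$ and $b\in B^\eta$ landing in the region $\bar\Delta_{z'} \times \partial\Delta_{w'} \times \bar{\tilde\Delta}_w$ need not satisfy $x \in \tilde\Delta$ a priori, so $\pi_z(x)\in Y$ is not automatic. I plan to handle this by taking $\eta$ of order $\max(\tilde\eta,\eta')$ (still $O(\beta^\theta)$): then the gap of $\tilde\Delta$ from Lemma~\ref{lem:pfaff-polydisc-codim1} forces $\pi_{\tilde\vw}(x)$ to lie away from $\partial\tilde\Delta_w$, so that $x \in X\cap\tilde\Delta$ and $\pi_z(x)\in Y$, at which point the inductive gap of $Y$ furnishes the needed contradiction and confers gap $B^\eta$ on $\Delta$.
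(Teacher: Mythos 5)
Your proposal is correct and follows essentially the same route as the paper: induction on the codimension $n-m$ with Lemma~\ref{lem:pfaff-polydisc-codim1} as the base case and as the device producing the one-codimensional pre-Weierstrass polydisc, projection to the base to get a sub-Pfaffian image of complexity $(2n-2,r+2,\dots,\beta)$, recombination of the two polydiscs, and the identical recursion $\theta(n,m,r,\ell)\le\nu(n,m,r,\ell)+\theta(n-1,m,r+2,\ell)$ summing to $(2n+r+\ell+2)\sum_{i=1}^{n-m}\tfrac1{2i-1}$. Your handling of the gap propagation (using the gap of the codimension-one polydisc to force $x\in\tilde\Delta$ before invoking the inductive gap on the projected set) is the same mechanism the paper uses, spelled out slightly more carefully.
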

\begin{proof}
  We begin with a simple topological remark. Suppose that
  $\Delta_1=\Delta_z\times\Delta_w\subset B$ is a pre-Weierstrass polydisc for
  $X$. Then $\pi^X_z$ is proper so $X':=\pi^X_z(X)\subset\Delta_z$ is
  an analytic subset. Suppose $\Delta_2:=\Delta_{z'}\times\Delta_{w'}\subset\Delta_z$
  is a Weierstrass polydisc for $X'$. Then
  \begin{equation}
    \Delta:=\Delta_2\times\Delta_w\subset\Delta_1
  \end{equation}
  is a Weierstrass polydisc for $X$. Indeed,
  \begin{itemize}
  \item $X$ does not meet
    $\Delta_{z'}\times(\Delta_{w'}\times\partial\Delta_w)$ since it
    does not meet $\Delta_z\times\partial\Delta_w$.
  \item $X$ does not meet
    $\Delta_{z'}\times(\partial\Delta_{w'}\times\Delta_w)$ since its
    $z$-projection $X'$ does not meet
    $\Delta_{z'}\times\partial\Delta_{w'}$.
  \end{itemize}

  We now proceed with the proof, by induction on $n-m$. The case
  $n-m=1$ is exactly Lemma~\ref{lem:pfaff-polydisc-codim1}. For
  $n-m>1$, consider the pre-Weierstrass polydisc
  $\Delta_1=\Delta_z\times\Delta_w\subset B$ provided by
  Lemma~\ref{lem:pfaff-polydisc-codim1}. Choose some ball
  $B'\subset\Delta_z$. Then $X'\cap B'\subset B'$ is a sub-Pfaffian
  set: after a unitary change from the $\vx$ to the $\vz\times\vw$
  coordinates it is defined by the formula
  \begin{equation}
    \psi(\vz) = \exists\vw : (\vw\in\Delta_w)\land(\vz\in B')\land\phi(\vz,\vw).
  \end{equation}
  of complexity $(2n-2,r+2,O(1),\ell,O(1),\beta)$. Thus, applying the
  inductive hypothesis we obtain a Weierstrass polydisc
  $\Delta_2\subset B'\subset\Delta_z$ for $X'$. Defining $\Delta$ as
  above we obtain a Weierstrass polydisc for $X$.

  We have $B^{\eta_1}\subset \Delta_1$ where $\eta_1=O(\beta^\nu)$ for
  $\nu=\nu(n,m,r,\ell)$. Then $B'$ can be chosen so that
  $B^{O(\eta_1)}\subset B'\times\Delta_w$. Also
  $(B')^{\eta_2}\subset \Delta_2$ where $\eta_2=O(\beta^\theta)$ for
  $\theta=\theta(n-1,m,r+2,\ell)$. Setting
  \begin{equation}
    \eta = O(\eta_1)\cdot \eta_2 = O(\beta^{\nu+\theta})
  \end{equation}
  we see that
  \begin{equation}
    B^\eta = (B^{O(\eta_1)})^{\eta_2} \subset (B'\times\Delta_w)^{\eta_2} \subset
    (B')^{\eta_2}\times\Delta_w\subset\Delta_2\times\Delta_w=\Delta.
  \end{equation}
  Finally computing $\theta$ by induction we see
  \begin{equation}
    \begin{split}
      \theta(n,m,r,\ell) &= \sum_{j=0}^{n-m-1} \nu(n-j,m,r+2j,\ell) =
      \sum_{j=0}^{n-m-1} \frac{2n+r+\ell+2}{2n-2j-2m-1} \\
      &\le (2n+r+\ell+2)\log (2n-2m+1).
    \end{split}
  \end{equation}

  To verify the gap condition, by
  Lemma~\ref{lem:pfaff-polydisc-codim1} we may choose $\Delta_1$ to
  have gap $B^{\eta_1}$ and by induction we may choose $\Delta_2$ to
  have gap $(B')^{\eta_2}$. Then
  \begin{equation}
    [\Delta_{z'}\times(\Delta_{w'}\times\partial\Delta_w)]\cap(X+B^{\eta_1})
    \subset (\Delta_z\times\partial\Delta_w)\cap(X+B^{\eta_1}) = \emptyset
  \end{equation}
  and
  \begin{multline}
    [\Delta_{z'}\times(\partial\Delta_{w'}\times\Delta_w)]\cap(X+(B'\times\Delta_w)^{\eta_2}) \\
    \subset \pi_z^{-1} [(\Delta_{z'}\times\partial\Delta_{w'}) \cap (X'+(B')^{\eta_2})] = \emptyset.
  \end{multline}
  Since $B^\eta\subset B^{\eta_1}$ and
  $B^\eta\subset (B'\times\Delta_w)^{\eta_2}$ we see that $\Delta$
  indeed has gap $B^\eta$.
\end{proof}

Theorem~\ref{thm:pfaff-polydisc} contains the key argument that allows
us to cover analytic sets by a polynomial (in $\beta$) number of Weierstrass
polydiscs. However, in practice the condition of pure-dimensionality of
$X$ is somewhat inconvenient. We use a deformation argument to obtain
a result valid in the case of mixed dimensions. We begin with a definition.

\begin{Def}\label{def:hol-pfaff-variety}
  Let $\Omega\subset\C^n$ be a domain and
  $\{g_l:\Omega\to\C, l=1,\ldots,S\}$ a collection of holomorphic
  functions. Suppose that the graphs of $g_\alpha$ are sub-Pfaffian
  with complexity bounded by $(2n+2,r,s,\ell,\alpha,\beta)$. Then we
  say that the analytic set $X\subset\Omega$ of common zeros of
  $\{g_l\}$ is a \emph{holomorphic-Pfaffian variety}.
\end{Def}

\begin{Cor}\label{cor:pfaff-polydisc}
  Let $X\subset\Omega$ be a holomorphic-Pfaffian variety as in
  Definition~\ref{def:hol-pfaff-variety} and $B\subset\Omega$ a
  relatively compact Euclidean ball. Let $0\le m<n$. There exists an
  analytic set $Z\subset B$ of pure dimension $m$ satisfying
  $X^{\le m}\subset Z$ and a Weierstrass polydisc $\Delta$ for $Z$
  such that:
  \begin{enumerate}
  \item $B^\eta\subset\Delta\subset B$ where $\eta=O(\beta^\theta)$
    and
    \begin{equation}
      \theta = \theta(n,m,2S(r+1),\ell) \le (2n+2S(r+1)+\ell+2)\log (2n-2m+1).
    \end{equation}
  \item $e(Z,\Delta)=O(\beta^{n+2S(r+1)+\ell})$.
  \end{enumerate}
\end{Cor}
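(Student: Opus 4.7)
The plan is to reduce to Theorem~\ref{thm:pfaff-polydisc} by replacing the mixed-dimensional $X$ with a pure $m$-dimensional analytic set $Z$ that is controlled by a sub-Pfaffian envelope.

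First I pick a generic $(n-m)\times S$ matrix $C=(c_{j,l})$ of real entries and form $h_j=\sum_l c_{j,l}g_l$ and $\hat X=\{h_1=\cdots=h_{n-m}=0\}$. Existentially introducing variables $y_l=g_l(\vx)$ via the sub-Pfaffian graphs of the $g_l$ and imposing the $\C$-linear constraints $\sum_l c_{j,l}y_l=0$ shows $\hat X$ is sub-Pfaffian with complexity essentially $(2n,2S(r+1),O(1),\ell,O(\alpha),\beta)$. I take $Z$ to be the union of the $m$-dimensional irreducible components of $\hat X\cap B$; by construction $Z$ is pure $m$-dimensional and analytic. At any $p\in X^{\le m}$ the codimension of $X$ at $p$ is at least $n-m$, so the rank of $(dg_1,\ldots,dg_S)(p)$ is $\ge n-m$; for generic $C$ this forces $\mathrm{rank}(dh_1,\ldots,dh_{n-m})(p)=n-m$, whence $\hat X$ is locally smooth of dimension $m$ near $p$ and $p$ lies on an $m$-dimensional component. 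This gives $X^{\le m}\subset Z$.

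To produce the polydisc $\Delta$, I apply Theorem~\ref{thm:pfaff-polydisc} to the smooth locus $Z^\circ=\{p\in\hat X:\mathrm{rank}(dh_1,\ldots,dh_{n-m})(p)=n-m\}$. Derivatives of the holomorphic-Pfaffian functions $h_j$ are again sub-Pfaffian with comparable complexity, and the rank condition is a Boolean combination of minor non-vanishings, so $Z^\circ$ is sub-Pfaffian of pure dimension $m$, and $\overline{Z^\circ}=Z$ inside $\hat X$. The theorem, applied to $Z^\circ$ with the complexity parameters above, furnishes a polydisc $\Delta=\Delta_z\times\Delta_w$ with $\dim\vz=m$, $B^\eta\subset\Delta\subset B$, the claimed $\theta=(2n+2S(r+1)+\ell+2)\log(2n-2m+1)$, and the gap condition for $Z^\circ$. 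Because the gap condition provides a positive-radius neighborhood of $\bar\Delta_z\times\partial\Delta_w$ disjoint from $Z^\circ$, and $Z$ is exactly the closure of $Z^\circ$, the disjointness passes automatically to $Z$, making $\Delta$ a Weierstrass polydisc for $Z$ with the required gap.

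The multiplicity bound follows from Fact~\ref{fact:weierstrass-proper} and Theorem~\ref{thm:pfaffian-complexity}: $\pi_z^Z$ is proper and finite-to-one, so each fiber is a finite set of isolated common zeros of $(h_1(p,\cdot),\ldots,h_{n-m}(p,\cdot))$ in $\Delta_w$. This fiber is a 0-dimensional sub-Pfaffian set whose complexity is inherited from $\hat X$; Theorem~\ref{thm:pfaffian-complexity} then yields $e(Z,\Delta)=O(\beta^{n+2S(r+1)+\ell})$.

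The main obstacle is that $Z$ itself is not sub-Pfaffian, so Theorem~\ref{thm:pfaff-polydisc} cannot be applied to $Z$ directly. The resolution is to work throughout with the sub-Pfaffian smooth locus $Z^\circ$ (whose closure is $Z$), perform all entropy/Vitushkin estimates there, and transfer the polydisc back using the positive-radius gap condition that the theorem already guarantees.
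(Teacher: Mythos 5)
Your reduction hinges on two claims that fail in the non-reduced case. First, the inference ``the codimension of $X$ at $p$ is $\ge n-m$, hence $\mathrm{rank}(dg_1,\ldots,dg_S)(p)\ge n-m$'' is false: for $g_1=x_1^2$, $g_2=x_2^2$ in $\C^2$ the set $X=\{0\}$ has codimension $2$ while both differentials vanish at the origin. So you cannot conclude that $\hat X$ is smooth of dimension $m$ near points of $X^{\le m}$; the containment $X^{\le m}\subset Z$ does hold for a generic matrix $C$, but only via an iterated Bertini-type argument (choosing each successive combination to be nonvanishing on the components of the previous good locus and separating off the components of $X$ at each stage), not via ranks of differentials. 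Second, and fatally, $\overline{Z^\circ}=Z$ fails exactly on the components you are required to keep: if $C$ is an $m$-dimensional component of $X$ along which every $g_l$ vanishes to order $\ge 2$ (e.g.\ $n=2$, $m=1$, $g_1=x_2^2$), then every $h_j$ vanishes to order $\ge 2$ along $C$, the rank of $(dh_1,\ldots,dh_{n-m})$ is degenerate identically on $C$, and $C\cap Z^\circ=\emptyset$. Such a $C$ lies in $X^{\le m}$ and hence must lie in $Z$, but it is invisible to $Z^\circ$: the entropy estimate never sees it, and the resulting $\Delta$ need not avoid $C$ on $\bar\Delta_z\times\partial\Delta_w$, so it need not be a Weierstrass polydisc for $Z$. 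The same degeneracy undercuts the multiplicity count, which tacitly assumes that a generic fiber of $\pi_z^Z$ consists of isolated zeros of the restricted system $h_1(p,\cdot)=\cdots=h_{n-m}(p,\cdot)=0$; this needs an argument that generic fibers avoid the higher-dimensional components of $\hat X$.

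The missing idea is a deformation rather than a smooth locus. The paper perturbs the equations to $\tilde g_j=c_j\e$ with $\e\neq 0$: the perturbed set $Z^\e$ is genuinely pure $m$-dimensional and sub-Pfaffian with no reducedness or rank hypothesis, Theorem~\ref{thm:pfaff-polydisc} applies to it uniformly in $\e$ (with a uniform gap), and a compactness/Hausdorff-limit argument transfers the limiting polydisc back to $Z$, which is contained in the limit of the $Z^{\e_j}$ by conservation of proper intersection numbers. The same conservation argument yields the bound on $e(Z,\Delta)$ by counting points of the $0$-dimensional sub-Pfaffian set $Z^\e\cap\{\vz=\vz(p)\}$. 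Without this (or an equivalent device), your construction does not produce a Weierstrass polydisc for all of $Z$.
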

\begin{proof}
  The claim is invariant under translation and we may assume without
  loss of generality that $B$ is centered at the origin. Let
  $\tilde g_1$ denote a generic linear combination of the $g_l$.
  If not all $g_l$ are identically vanishing then the zero locus
  $Z_1$ of $\tilde g_1$ is an analytic subset of $\bar B$ of
  codimension $1$. In particular it has finitely many irreducible
  components. We write $Z_{1,b}$ for the union of those components of
  $Z_1$ which are components of $X$ and $Z_{1,g}$ for the rest.

  For every component of $Z_{1,g}$ there is a function $g_l$
  which is not identically vanishing on it. Then we may choose a
  generic linear combination $\tilde g_2$ of the $g_l$ which is
  not identically vanishing on any component of $Z_{1,g}$. We set
  $Z_2=Z_{1,g}\cap\{\tilde g_2=0\}$, which is an analytic subset of
  $\bar B$ of codimension $2$. We write $Z_{2,b}$ for the union of
  those components of $Z_2$ which are components of $X$ and $Z_{2,g}$
  for the rest.

  Proceeding in the same manner we obtain a set $Z:=Z_{n-m}$
  with
  \begin{equation}
    X\subset Z\cup Z_b, \qquad Z_b:= Z_{1,b}\cup\cdots\cup Z_{n-m-1,b}.
  \end{equation}
  Since the components of the sets $Z_{j,b}$ for $j=1,\ldots,n-m-1$
  have codimension $j<n-m$ we have $X^{\le m}\subset Z$.

  Fix a tuple $c_1,\ldots,c_{n-m}\in\C$. Let $\e>0$ and set
  \begin{equation}
    Z^\e := \{\vx\in B: \tilde g_1(\vx)=c_1\e, \cdots, \tilde g_{n-m}(\vx)=c_{n-m}\e\}.
  \end{equation}
  By a Sard-type argument, for generic $c_j$ and sufficiently small
  $\e>0$ we have $\dim Z_\e=m$. We claim that $Z$ is contained in the
  Hausdorff limit of $Z_{\e_j}$ along any sequence $0\neq\e_j\to0$.
  Note that $Z$ has pure dimension $m$ while $Z_b\cap Z$ has dimension
  strictly smaller than $m$, so $Z_g:=Z\setminus Z_b$ is dense in $Z$
  and it will suffice to prove that $Z_g$ is contained in the limit of
  $Z_\e$. Let $y\in Z_g$ and we will show it is in the limit of
  $Z_{\e_j}$.

  The equations $\tilde g_1=\cdots=\tilde g_{n-m}=0$ intersect
  properly, i.e. at an analytic set of dimension $m$, around $y$. If
  we choose $m$ additional generic affine-linear functions
  $L_1,\ldots,L_m$ vanishing at $y$ then the intersection
  \begin{equation}
    \tilde g_1=\cdots=\tilde g_{n-m} = L_1=\cdots=L_m=0
  \end{equation}
  is a proper isolated intersection. By conservation of proper
  intersection numbers under deformations we see that the system
  \begin{equation}
    \tilde g_1=c_1\e_j ,\ldots, \tilde g_{n-m}=c_{n-m}\e_j, \quad L_1=\cdots=L_m=0
  \end{equation}
  must indeed admit at least one solution $y_j\in Z^{\e_j}$ converging
  to $y$ as $\e_j\to0$.

  For $\e>0$ the set $Z^\e$ is defined by the sub-Pfaffian formula
  \begin{equation}\label{eq:cor-pfaff-polydisc-1}
    \phi(\vx) = (x\in B)\land \exists(y_1,\ldots,y_S): \bigwedge_{l=1}^S (y_l=g_l(\vx))
    \cap\bigwedge_{j=1}^{n-m} (\tilde g_j(\vx)=c_j\e)
  \end{equation}
  where we write each $\tilde g_j$ as an appropriate linear
  combination of the $y_l$ variables. The complexity of $\phi$ is
  bounded by $(2n,2S(r+1),O(1),\ell,O(1),\beta)$. By
  Theorem~\ref{thm:pfaff-polydisc}, $Z^\e$ admits a Weierstrass
  polydisc $\Delta(\e)$ satisfying $B^\eta\subset\Delta(\e)\subset B$ where
  \begin{align}
      \eta &:= O(\beta^\theta) &
      \theta = \theta(n,m,2S(r+1),\ell).
  \end{align}
  Moreover we may assume that the $\Delta(\e)$ have a gap bounded
  from below uniformly over $\e$.

  Since the space of Weierstrass polydiscs satisfying the conditions
  above is compact we may choose a sequence $\e_j\to0$ such that
  $\Delta(\e_j)$ converges (for instance in the Hausdorff distance) to
  some polydisc $\Delta$. We claim the $\Delta$ is a Weierstrass
  polydisc for $Z$. Indeed, suppose $Z$ intersects
  $\Delta_z\times\partial\Delta_w$. Since $Z$ is the Hausdorff limit
  of $Z^{\e_j}$ we see that points of $Z^{\e_j}$ must come arbitrarily
  close to $\Delta_z\times\partial\Delta_w$. But this contradicts the
  fact that $\Delta(\e_j)$ converges to $\Delta$ and $Z^{\e_j}$ stays
  at a uniformly bounded distance from
  $\Delta(\e_j)_z\times\partial\Delta(\e_j)_w$.

  To estimate $e(Z,\Delta)$ recall that $Z\setminus Z_b$ has dimension
  strictly smaller than $m$. Since the map $\pi^Z_z$ is finite we see
  that for a generic choice of a point $p\in\Delta_z$ the fiber
  $(\pi^Z_z)^{-1}(p)$ consists of $\nu=e(Z,\Delta)$ isolated points in
  $Z\setminus Z_b$. Each such isolated point is an isolated solution
  of the system
  \begin{equation}\label{eq:cor-pfaff-polydisc-2}
    \{ (\vz,\vw)\in\Delta : \tilde g_1(\vz,\vw)=\cdots=\tilde g_{n-m}(\vz,\vw)=0, \vz=\vz(p) \}.
  \end{equation}
  Then the intersection~\eqref{eq:cor-pfaff-polydisc-2} is proper at these
  isolated points and it follows that for sufficiently small $\e$ the
  intersection
  \begin{equation}
    \{ (\vz,\vw)\in\Delta : \tilde g_1(\vz,\vw)=c_1\e,\ldots=\tilde g_{n-m}(\vz,\vw)=c_{n-m}\e, \vz=\vz(p) \}
  \end{equation}
  contains at least $\nu$ points. But this intersection is
  sub-Pfaffian, being the intersection of $Z^\e$ with the equation
  $\vz=\vz(p)$. Hence the upper bound for $\nu$ follows from
  Theorem~\ref{thm:pfaffian-complexity}.
\end{proof}

\begin{Rem}\label{rem:pfaff-polydisc-S}
  In Corollary~\ref{cor:pfaff-polydisc}, if some of the functions
  $g_l$ are in fact \emph{Pfaffian} of degree $\beta$ rather
  sub-Pfaffian, then one can take $S$ to be the number of sub-Pfaffian
  functions. Indeed, in the formula~\eqref{eq:cor-pfaff-polydisc-1}
  one does not need to add new variables $y_l$ to express the value of
  the Pfaffian $g_l$: as Pfaffian functions they can be summed into
  the linear combinations $\tilde g_j$ directly.
\end{Rem}

\section{Exploring rational points}
\label{sec:exploring}

We begin with a definition.

\begin{Def}\label{def:xw}
  Let $X\subset\C^m$ and $W\subset\C^m$ be two sets. We define
  \begin{equation}
    X(W) := \{ w\in W : W_w \subset X \}
  \end{equation}
  to be the set of points of $W$ such that $X$ contains the germ of
  $W$ around $w$, i.e. such that $w$ has a neighborhood
  $U_w\subset\C^m$ such that $W\cap U_w\subset X$.
\end{Def}

If $A\subset\C^n$ we denote by $A_\R:=A\cap\R^n$. We
remark that
\begin{equation}\label{eq:alg-part-R}
  (A(W))_\R\subset(A_\R)(W_\R).
\end{equation}
We will consider Definition~\ref{def:xw} in two cases: for
$X\subset\C^m$ locally analytic and $W\subset\C^m$ an algebraic
variety, and for $X\subset\R^m$ subanalytic and $W\subset\R^m$ a
semi-algebraic set.

Our principal motivation for Definition~\ref{def:xw} is the following
direct consequence (cf. Theorem~\ref{thm:Rre-main}).

\begin{Lem}\label{lem:alg-part-trans}
  Let $W\subset\R^m$ be a connected positive-dimensional semialgebraic
  set and $A\subset\R^m$. Then $A(W)\subset A^\alg$.
\end{Lem}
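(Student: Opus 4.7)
The plan is straightforward: for each $w \in A(W)$, I produce a connected semialgebraic subset of $A$ of positive dimension containing $w$, which is exactly what is needed to place $w$ in $A^{\alg}$.

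Fix $w \in A(W)$ and let $U_w \subset \R^m$ be the open neighborhood furnished by Definition~\ref{def:xw}, so $W \cap U_w \subset A$; shrinking if necessary I may take $U_w$ to be an open ball and hence semialgebraic. Then $W \cap U_w$ is semialgebraic, and I appeal to the standard fact that a semialgebraic set has finitely many connected components, each of which is semialgebraic and clopen in it. Let $C$ be the connected component of $w$ in $W \cap U_w$. Then $C$ is connected, semialgebraic, and satisfies $C \subset W \cap U_w \subset A$. Hence once $\dim C \ge 1$ is verified, it follows that $C \subset A^{\alg}$, and in particular $w \in A^{\alg}$, which is what we want.

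To argue $\dim C \ge 1$ I reason by contradiction. If instead $C = \{w\}$, then since $C$ is open in $W \cap U_w$, some smaller ball $U'_w \subset U_w$ satisfies $W \cap U'_w = \{w\}$, so $w$ is isolated in $W$. Then $\{w\}$ is both open and closed in $W$, and the connectedness of $W$ forces $W = \{w\}$, contradicting the assumption that $W$ has positive dimension.

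The argument presents no serious obstacle; the one input requiring care is that connected components of a semialgebraic set are clopen in it, which is a standard consequence of the finiteness of the number of components. Everything else is bookkeeping around Definition~\ref{def:xw} and the definition of $A^{\alg}$.
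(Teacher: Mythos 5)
Your proof is correct and is precisely the argument the paper leaves implicit (the lemma is stated as a ``direct consequence'' of Definition~\ref{def:xw} with no written proof): pass to the connected component $C$ of $w$ in $W\cap U_w$, note it is a connected semialgebraic subset of $A$, and rule out $\dim C=0$ using that $C$ is open in $W$ together with the connectedness and positive dimension of $W$. The two standard facts you invoke --- that semialgebraic sets have finitely many components, each semialgebraic and clopen, and that a zero-dimensional connected semialgebraic set is a point --- are exactly the right inputs.
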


We record some simple consequences of Definition~\ref{def:xw}.
\begin{Lem}\label{lem:alg-part-rules}
  Let $A,B,W\subset\C^m$. Then
  \begin{equation}
    A(W)\cup B(W)\subset(A\cup B)(W).
  \end{equation}
  If $A\subset B$ is relatively open then
  \begin{equation}
    B(W)\cap A = A(W).
  \end{equation}
\end{Lem}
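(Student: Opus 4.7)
The plan is to unwind Definition~\ref{def:xw} in both cases and use the fact that the defining condition ``$W_w\subset X$'' is purely local at $w$, hence well-behaved under taking supersets and under shrinking neighborhoods.

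For the first inclusion I would argue as follows. Suppose $w\in A(W)$; then by definition $w\in W$ and there is a neighborhood $U_w\subset\C^m$ of $w$ with $W\cap U_w\subset A$. Since $A\subset A\cup B$ we get $W\cap U_w\subset A\cup B$, i.e.\ $w\in(A\cup B)(W)$. The same argument with the roles of $A$ and $B$ swapped handles $w\in B(W)$, giving $A(W)\cup B(W)\subset(A\cup B)(W)$.

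For the equality $B(W)\cap A=A(W)$ under the hypothesis that $A$ is relatively open in $B$, I would first translate the hypothesis into the existence of an open set $V\subset\C^m$ with $A=B\cap V$. The inclusion $A(W)\subset B(W)\cap A$ is immediate: any witnessing neighborhood $U_w$ for $w\in A(W)$ also witnesses $w\in B(W)$ (since $A\subset B$), and moreover $w\in W\cap U_w\subset A$. Conversely, given $w\in B(W)\cap A$, choose a neighborhood $U_w$ of $w$ with $W\cap U_w\subset B$ and replace it by $U_w':=U_w\cap V$, which is still a neighborhood of $w$ because $w\in A\subset V$. Then
\begin{equation}
   W\cap U_w'\subset B\cap V=A,
\end{equation}
so $w\in A(W)$, completing the reverse inclusion.

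There is no real obstacle here; the only point worth spelling out is the use of relative openness to manufacture the set $V$ and thereby the shrunken neighborhood $U_w'$. Once that is done, both claims reduce to the observation that the germ condition $W_w\subset X$ is preserved under passing to supersets of $X$ (used for the first inclusion) and under intersecting the ambient open set with a further open set containing $w$ (used for the non-trivial direction of the second).
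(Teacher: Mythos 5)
Your proof is correct; the paper states this lemma without proof, treating it as an immediate consequence of Definition~\ref{def:xw}, and your argument is exactly the intended unwinding of that definition. The one step genuinely worth writing down --- using relative openness to produce $V$ with $A=B\cap V$ and then shrinking the witnessing neighborhood to $U_w\cap V$ --- is handled correctly.
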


\subsection{Projections from admissible graphs}
\label{sec:admissible-graph}

Let $\Omega_x\subset\C^m$ and $\Omega_y\subset\C^n$ be domains and set
$\Omega:=\Omega_x\times\Omega_y\subset\C^{n+m}$. We denote by
$\pi:\Omega\to\Omega_x$ the projection map.

Let $U\subset\Omega_x$ be an open subset and $\psi:U\to\Omega_y$
a function, and denote its graph by
\begin{equation}
  \Gamma_\psi\subset\Omega, \qquad \Gamma_\psi := \{ (x,\psi(x)) : x\in U\}.
\end{equation}
We denote by $\tilde\psi:U\to\Gamma_\psi$ the map $x\to(x,\psi(x))$.

\begin{Def}\label{def:admissible}
  We say that $\psi$ is \emph{admissible} if $\Gamma=\Gamma_\psi$ is
  relatively compact in $\Omega$, and if there exists an analytic
  subset $X_\Gamma$ of $\Omega$ which agrees with $\Gamma$ over $U$,
  i.e. $X_\Gamma\cap\pi^{-1}(U)=\Gamma$.
\end{Def}

\begin{Ex}\label{ex:admissible-division}
  Let $\Omega_x\subset\C^2$ be a domain such that the unit ball
  $B_2\subset\C^2$ is a relatively compact subset of $\Omega_x$ and
  $\Omega_y\subset\C$ be a domain such that the unit ball
  $B_1\subset\C$ is a relatively compact subset of $\Omega_y$. Let
  \begin{equation}
    U:=\{(x_1,x_2)\in B_2:|x_1|<|x_2|\}
  \end{equation}
  and define $\psi:U\to\Omega_y$ by $(x_1,x_2)\to(x_1/x_2)$. Then
  $\psi$ is an admissible projection, as its graph over $U$ agrees
  with the analytic subset $X_\Gamma\subset\Omega$ given by
  $yx_2=x_1$. This example is essentially the only case that we shall
  require in the sequel.
\end{Ex}

In Theorem~\ref{thm:exploring-projection} we prove an analog of the
Wilkie conjecture for images of holomorphic Pfaffian varieties under
admissible projections. In~\secref{sec:definable} we study
$\R^\RE$-definable sets, and show that (for the purpose of counting
rational points) one can reduce any definable set to the image of such
an admissible projection (see
Proposition~\ref{prop:D-equation-to-proj}).

\subsection{Rational points on admissible projections}

We fix an admissible map $\psi:U\to\Omega_y$ and denote
$\Gamma:=\Gamma_\psi$. In this section we will consider a fixed
holomorphic-Pfaffian variety $X$, and compute asymptotics for the
number of rational points with respect to the height $H$. Therefore in
our asymptotic notations we allow ours constants to depends $X$ and
$\Gamma$, as well as $[\cF:\Q]$. We note however that the estimates do
not depend on $\cF$ itself. The following is our main result in this
section.

\begin{Thm}\label{thm:exploring-projection}
  Let $X\subset\Omega$ be a holomorphic-Pfaffian variety defined by
  $S$ sub-Pfaffian functions with complexity bounded by
  $(2n+2m+2,r,s,\ell,\alpha,\beta)$. Suppose $X\subset X_\Gamma$ and
  set
  \begin{equation}
    Y := \pi(X\cap\Gamma) \subset \Omega_x.
  \end{equation}
  Then for
  \begin{equation}
    \kappa = 3^{m}\left(m+n+2S(r+1)+\ell+1\right)^{3m}
  \end{equation}
  and any $H\in\N$ there exists algebraic varieties $V_0,\ldots,V_m$
  such that $V_j$ has pure dimension $j$ and degree
  $O((\log H)^\kappa)$, and
  \begin{equation}
    Y^\size(\cF,H) \subset Y(V_0)\cup\cdots\cup Y(V_m).
  \end{equation}
\end{Thm}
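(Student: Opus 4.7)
My plan is to induct on $d:=\dim(X\cap\Gamma)$, which equals $\dim Y$ since $\pi$ restricted to $\Gamma$ is a bijection onto $U$. In the base case $d=0$, the set $X\cap\Gamma$ is relatively compact and discrete, hence finite, with cardinality bounded by a constant depending only on the Pfaffian complexity parameters via Theorem~\ref{thm:pfaffian-complexity}. Then $V_0$ can be taken to consist of the (boundedly many) points of $Y^\size(\cF,H)$ itself, viewed as a $0$-dimensional variety of bounded degree, and $V_j=\emptyset$ for $j>0$.

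For the inductive step, I would first apply Corollary~\ref{cor:pfaff-polydisc} to $X$ with target dimension $d$, covering $X^{\le d}$ by at most $\poly(\beta)$ Weierstrass polydiscs $\Delta_k$ for auxiliary pure-$d$-dimensional sets $Z_k\supset X^{\le d}$, each with multiplicity $e(Z_k,\Delta_k)=\poly(\beta)$. On each polydisc and for each subset $T\subset\{1,\ldots,m\}$ of size $d+1$, I would apply Proposition~\ref{prop:hypersurface-select} with $\vf$ equal to the pullbacks of the coordinates indexed by $T$, obtaining a hypersurface $H_k^T\subset\C^{d+1}$ of degree $O((\log H)^d)$ whose preimage $\pi_T^{-1}(H_k^T)\subset\C^m$ contains the corresponding size-$H$-$\cF$ points of $Y$. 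Combining these across $k$ and $T$ (either by transverse intersection of the preimages for generically chosen coordinate subsets, or by a direct multivariate interpolation argument extending Proposition~\ref{prop:hypersurface-select}) I would produce an algebraic subvariety $V\subset\C^m$ of degree $\poly(\log H)$ whose top-dimensional part has dimension $d$, such that $Y^\size(\cF,H)\subset V$.

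Next, I would partition the irreducible components of $V$ into a part $V^\alg$, consisting of those components $V_\alpha$ for which $(V_\alpha)_v\subset Y$ for at least one (hence, by irreducibility, on a dense open subset of) $v\in V_\alpha$, and the complementary $V^\trans$. Every point of $Y^\size(\cF,H)\cap V^\alg$ belongs to $Y(V^\alg)$ by construction, so I would take $V_d$ to be the pure-$d$-dimensional part of $V^\alg$. For the remaining points $y\in Y^\size(\cF,H)\cap V^\trans$, write $y=\pi(\gamma)$ with $\gamma\in X\cap\Gamma\cap\pi^{-1}(V^\trans)$ and set $X':=X\cap\pi^{-1}(V^\trans)$. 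This is again a holomorphic-Pfaffian variety (adjoin to the defining equations of $X$ the polynomials cutting out $V^\trans$), and $\dim(X'\cap\Gamma)<d$ since, by definition of $V^\trans$, no top-dimensional component of $X\cap\Gamma$ projects into $V^\trans$. The inductive hypothesis applied to $(X',\Gamma,\pi)$ yields varieties $V'_0,\ldots,V'_{d-1}$, which I take as $V_0,\ldots,V_{d-1}$.

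The main obstacle lies in the construction of a variety of dimension exactly $d$ in $\C^m$ from hypersurface selections living in various $\C^{d+1}$: a naive union of preimages $\bigcup_{k,T}\pi_T^{-1}(H_k^T)$ has dimension $m-1$ rather than $d$, so one must argue that a transverse intersection of sufficiently many such preimages does cut down to the correct dimension with polynomial degree control, or else generalize the interpolation-determinant machinery to produce a codimension-$(m-d)$ subvariety directly from the hypersurface selections. A secondary challenge is bookkeeping the degree exponent through the recursion: the stated $\kappa=3^m(m+n+2S(r+1)+\ell+1)^{3m}$ suggests that each inductive step multiplies the exponent by a factor comparable to the Pfaffian complexity bound appearing in Corollary~\ref{cor:pfaff-polydisc}, with the $3^m$ arising from the interpolation step, so one must verify that these compound cleanly through $d\le m$ levels of recursion.
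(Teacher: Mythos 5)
Your proposal has a genuine gap at its central step, and it is exactly the one you flag yourself: Proposition~\ref{prop:hypersurface-select} gives you a hypersurface of controlled degree and nothing else --- no control over its geometry --- so there is no way to force the preimages $\pi_T^{-1}(H_k^T)$ (over the various coordinate subsets $T$ and polydiscs $k$) to intersect properly down to dimension $d$. They could all share a common $(m-1)$-dimensional component not contained in $Y$, and "choose the coordinate subsets generically" does not help because the hypersurfaces are handed to you by the interpolation determinant, not chosen by you. A direct multivariate generalization producing a codimension-$(m-d)$ variety in one shot is not available from the machinery in the paper. There are also secondary issues you do not address: the points of $Y^\size(\cF,H)$ lying in the singular locus of the intermediate varieties (where the germ argument distinguishing $Y(V_\alpha)$ from the rest breaks down), and the fact that selecting the "transcendental" components $V^\trans$ and then forming $X\cap\pi^{-1}(V^\trans)$ requires cutting out a union of components with degree-controlled equations (this is doable via Lemma~\ref{lem:W-degrees}, but needs to be said).

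The paper circumvents the dimension-cutting problem entirely by restructuring the induction. Instead of trying to produce a $d$-dimensional variety containing $Y^\size(\cF,H)$ directly, it proves an intermediate statement (Proposition~\ref{prop:exploring-projection}): for \emph{any} pure $k$-dimensional $W\subset\C^m$ of degree $d$, one can find $V\subset W$ of pure dimension $k-1$ and degree $O(d^{\lambda(k)}(\log H)^{k-1})$ with $(Y\cap W)^\size(\cF,H)\subset Y(W)\cup V$. The key trick is that the hypersurface selection is applied to the set $Z'\supset (X\cap(W\times\Omega_y))^{<k}$ of dimension $k-1$, using as the map $\vf$ a set of $k$ coordinates \emph{dominant on} $W$; the resulting polynomial $P_p(\vf)$ then cannot vanish identically on (the irreducible) $W$, so $W\cap\{P_p(\vf)=0\}$ automatically has pure dimension $k-1$ with degree controlled by Bezout. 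Points of $Y\cap W$ escaping this are shown, by a germ argument, to lie in $Y(W)\cup\Sing W$, and $\Sing W$ is absorbed by one extra polynomial of degree $\le\deg W$. The theorem then follows by iterating from $W=\C^m$ down to dimension $0$, collecting $Y(V_k)$ at each stage. If you want to salvage your approach, the cleanest repair is precisely to abandon the one-shot construction of a $d$-dimensional $V$ and adopt this one-codimension-at-a-time descent inside an ambient pure-dimensional variety.
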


We will need the following basic lemma.

\begin{Lem}\label{lem:W-degrees}
  Let $W\subset\C^m$ be a pure dimensional algebraic variety of degree
  $d$. Then:
  \begin{enumerate}
  \item $W$ is set-theoretically cut out by a set of at most $m+1$
    polynomials, each of degree at most $d$.
  \item The exists a polynomial $Q$ of degree at most $d$
    vanishing identically on $\Sing W$ but not on $W$.
  \end{enumerate}
\end{Lem}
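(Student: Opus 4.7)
The plan is to treat the two parts with generic linear projections $\pi\colon\C^m\to\C^{k+1}$ where $k=\dim W$.

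For part~(1), first I would show that $W$ is set-theoretically cut out by a family of polynomials of degree $\le d$: for a generic $\pi$, the restriction $\pi|_W$ is finite and the image $\pi(W)$ is a hypersurface of degree $\le d$, cut out by some polynomial $P_\pi$; hence $P_\pi\circ\pi$ has degree $\le d$ and vanishes on $W$. Any $p\notin W$ can be separated from $W$ by choosing $\pi$ generically so that $\pi(p)\notin\pi(W)$. To reduce the number of defining polynomials to $m+1$, I would apply the standard linear-combination trick: take $m+1$ generic $\C$-linear combinations of the $P_\pi\circ\pi$. For each $p\notin W$ the vanishing of all $m+1$ combinations at $p$ imposes $m+1$ independent linear conditions on the coefficients, and since $\C^m\setminus W$ has dimension $\le m$, a dimension count shows that generic coefficients avoid all obstructions.

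For part~(2), I would first reduce to the case where $W$ is irreducible. Writing $W=W_1\cup\cdots\cup W_s$ with $\deg W_i=d_i$ and $\sum d_i=d$, if the irreducible case supplies $Q_i$ of degree $\le d_i$ vanishing on $\Sing W_i$ but not on $W_i$, and part~(1) supplies $R_j$ of degree $\le d_j$ vanishing on $W_j$ but not on $W_1$, then $Q:=Q_1\cdot\prod_{j\ne1}R_j$ has degree $\le d$, vanishes on $\bigcup_i\Sing W_i$ and on each $W_i\cap W_j$, and does not vanish on $W_1\subset W$. In the irreducible case I would take a generic linear projection $\pi$, producing a finite birational map $\pi|_W$ onto the hypersurface $H=\pi(W)=\{P=0\}$ of degree $d$. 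A generic linear combination $Q_0=\sum_jc_j\partial_jP$ has degree $\le d-1$, vanishes on $\Sing H$ (every partial does), and does not vanish on $H$ (each $\partial_jP$ has degree strictly below $\deg P$ and is thus not in the principal ideal $(P)$, and a generic combination avoids the proper subspace of those that do). The pullback $Q_0\circ\pi$ then has degree $\le d-1$, does not vanish on $W$, and vanishes on $\Sing W$ provided $\pi(\Sing W)\subset\Sing H$.

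The main obstacle is establishing $\pi(\Sing W)\subset\Sing H$ for generic $\pi$ in the irreducible case. For $p\in\Sing W$ with $\pi^{-1}(\pi(p))\cap W=\{p\}$, if $\pi(p)$ were smooth on $H$ then $H$ would be normal there and Zariski's Main Theorem applied to the finite birational $\pi|_W$ would force it to be a local isomorphism at $p$, contradicting $p\in\Sing W$; hence $\pi(p)\in\Sing H$. If the fiber $\pi^{-1}(\pi(p))\cap W$ contains several points, then for generic $\pi$ the images of distinct local branches of $W$ at those preimages meet transversely at $\pi(p)$, again forcing $\pi(p)\in\Sing H$.
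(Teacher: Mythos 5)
Your proposal is correct and follows essentially the same route as the paper: a generic linear projection onto a hypersurface in dimension $\dim W+1$, pulling back its defining polynomial for part~(1) (with a standard reduction to $m+1$ defining equations) and a derivative of that polynomial for part~(2). The only substantive difference is the justification of the key inclusion $\pi(\Sing W)\subset\Sing(\pi(W))$: the paper gets it in one line from the facts that finite linear projections do not decrease local multiplicity and that the singular points of the reduced image hypersurface are exactly those of multiplicity greater than one, whereas you argue via normality and Zariski's Main Theorem at one-point fibers and via branch-counting at multi-point fibers --- both are valid, and in the latter case mere distinctness of the two local branches already forces a singular point, so transversality is not actually needed.
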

\begin{proof}
  Assume first that $W$ is a hypersurface. Then $W=\{P=0\}$ where $P$
  is square free and $\deg P=d$, giving the first statement. Any
  non-vanishing derivative of $P$ satisfies the second statement.

  For the general case, any linear projection $\pi_L(W)$ to a
  $(\dim W+1)$-dimensional space $L$ has degree bounded by $d$ and
  hence is cut out by a polynomial $P_L$ of degree at most $d$. It
  is easy to see that the polynomials $P'_L:=\pi_L^* P_L$ over all $L$
  cut out $W$ set-theoretically. To see the $m+1$ of them suffice,
  choose a generic $L_1$ such that $P_{L_1}$ does not vanish
  identically on $\C^m$ and write $W_1=\{P_{L_1}=0\}$. If
  $C\subset W_1$ is a component of $W_1$ which is not a component of
  $W$ then, since $\{P'_L\}$ cut out $W$ set theoretically, a generic
  $P'_L$ does not vanish on $C$. Choose a generic $L_2$ such that
  $P_{L_2}$ does not vanish identically on any such component $C$, and
  write $W_2=W_1\cap\{P_{L_2}=0\}$. Continuing in the same manner we
  see that any component $C\subset W_k$ which is not a component of
  $W$ has dimension at most $m-k$. In particular $W_{m+1}=W$.

  For the second statement choose generic $L$ such that the projection
  of $W$ to $L$ is generically one-to-one. Then $\pi_L(W)$ is reduced
  and its singular point are the points of local multiplicity greater
  than one. Since linear projections do not decrease local
  multiplicity we have $\pi_L(\Sing W)\subset \Sing \pi_L(W)$. If
  $Q_L$ is the polynomial constructed for $\pi_L(W)$ then
  $\pi_L^* Q_L$ satisfies the second statement for $W$.
\end{proof}

We begin the proof of Theorem~\ref{thm:exploring-projection} with the
following proposition.

\begin{Prop}\label{prop:exploring-projection}
  Let $X\subset\Omega$ be a holomorphic-Pfaffian variety defined by
  $S$ sub-Pfaffian function with complexity bounded by
  $(2n+2m+2,r,s,\ell,\alpha,\beta)$. Suppose $X\subset X_\Gamma$ and
  set
  \begin{equation}
    Y := \pi(X\cap\Gamma) \subset \Omega_x.
  \end{equation}
  Let $W\subset\C^m$ be an algebraic variety of pure dimension $k$ and
  degree $d$. Then for
  \begin{multline}
    \lambda(k) = \lambda(n,m,r,S,\ell,k) := (n+m+2S(r+1)+\ell)(n+m-k+1)
    \\ +(n+m)\theta(n+m,k-1,2S(r+1),\ell)+1
  \end{multline}
  and for any $H\in\N$ there exists an algebraic variety
  $V\subset W$ of pure dimension $k-1$ and degree
  $O(d^{\l(k)}(\log H)^{k-1})$ such that
  \begin{equation}
    (Y\cap W)^\size(\cF,H) \subset Y(W)\cup V.
  \end{equation}
\end{Prop}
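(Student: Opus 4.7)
The plan is to combine the Weierstrass-polydisc covering of Corollary~\ref{cor:pfaff-polydisc} with the hypersurface-selection Proposition~\ref{prop:hypersurface-select}, applied to the auxiliary holomorphic-Pfaffian variety $\tilde X := X\cap\pi^{-1}(W)\subset\Omega\subset\C^{n+m}$. By Lemma~\ref{lem:W-degrees}(1), $W$ is set-theoretically cut out by at most $m+1$ polynomials of degree $\le d$; adjoining these to the defining data of $X$ and invoking Remark~\ref{rem:pfaff-polydisc-S}, I may treat $\tilde X$ as a holomorphic-Pfaffian variety with the same number $S$ of sub-Pfaffian functions but effective degree $\beta':=\max(\beta,d)$. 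Since $X\subset X_\Gamma$ is a graph over $U$, the generic fibre of $\pi|_{\tilde X}$ is a single point, so $\dim\tilde X\le\dim\pi(\tilde X)\le k$.

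\textbf{Covering and selection.} I apply Corollary~\ref{cor:pfaff-polydisc} to $\tilde X$ in ambient $\C^{n+m}$ with target dimension $k-1$. This produces an analytic set $Z$ of pure dimension $k-1$ containing $\tilde X^{\le k-1}$, such that every ball $B\subset\Omega$ carries a Weierstrass polydisc $\Delta\subset B$ for $Z$ with $B^\eta\subset\Delta$, $\eta=O((\beta')^{\theta})$ and multiplicity $e(Z,\Delta)=O((\beta')^{n+m+2S(r+1)+\ell})$, where $\theta=\theta(n+m,k-1,2S(r+1),\ell)$. A complex polydisc packing argument then covers the relevant compact portion of $\tilde X$ by $N=O((\beta')^{(n+m)\theta})$ such polydiscs $\Delta_1,\ldots,\Delta_N$. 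Next I fix a generic linear projection $L:\C^m\to\C^k$ of norm $O(1)$ chosen so that $L|_{W'}$ is dominant on every irreducible component $W'$ of $W$. On each $\Delta_i$ I invoke Proposition~\ref{prop:hypersurface-select} with the $k$-tuple $\vf=L\circ\pi$ (functions of norm $O(1)$ on $\Delta_i'$); since $\dim Z=k-1$, this yields a polynomial $P_i\in\C[y_1,\ldots,y_k]$ of degree $d'=O((\beta')^{(n+m+2S(r+1)+\ell)(n+m-k+1)}(\log H)^{k-1})$ whose zero set contains $(L\circ\pi)(Z\cap\Delta_i^2)^\size(\cF,H)$. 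Define $V:=W\cap\bigcup_{i=1}^N\{P_i\circ L=0\}$. The genericity of $L$ forces $P_i\circ L\not\equiv 0$ on any component of $W$, so $V$ is pure of dimension $k-1$ with degree at most $d\cdot N\cdot d'=O(d^{\lambda(k)}(\log H)^{k-1})$.

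\textbf{Verifying the containment.} It remains to show $(Y\cap W)^\size(\cF,H)\setminus Y(W)\subset V$. Let $p$ be such a rational point and pick any preimage $\tilde p\in\tilde X\cap\pi^{-1}(p)$; this preimage exists because $p\in Y\cap W=\pi(\tilde X\cap\Gamma)$. If $\tilde p$ were a smooth point of a dim-$k$ component $C$ of $\tilde X$ at which $\pi|_C$ is a local biholomorphism onto a neighborhood in $W$, then $W$ and $Y=\pi(X\cap\Gamma)$ would coincide near $p$, contradicting $p\notin Y(W)$. Hence $\tilde p$ lies either in $\tilde X^{\le k-1}\subset Z$ or in the singular/critical locus of a dim-$k$ component of $\tilde X$; the latter is a sub-Pfaffian subset of complex dimension $\le k-1$, which may be adjoined to the Pfaffian defining data via Jacobian (determinantal) equations without degrading the estimates above, and by Lemma~\ref{lem:W-degrees}(2) the algebraic contribution of $\Sing W$ can be absorbed into $V$ at cost $O(d^2)$. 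In either case we may assume $\tilde p\in Z$, so $\tilde p\in\Delta_i^2$ for some $i$; then $L(p)=(L\circ\pi)(\tilde p)\in\{P_i=0\}$, giving $p\in V$.

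\textbf{Main obstacle.} The most delicate part is the dichotomy in the verification step: controlling the dim-$k$ components of $\tilde X$ so that they contribute to $Y(W)$ except along a sub-Pfaffian locus which can itself be reabsorbed into $Z$ without inflating the exponents defining $\lambda(k)$. The other technical point is justifying the sharper polydisc-count $N=O((\beta')^{(n+m)\theta})$ via a complex packing argument, rather than the naive $(\beta')^{2(n+m)\theta}$ one would get from a real covering. The remaining degree bookkeeping is a routine combination of Corollary~\ref{cor:pfaff-polydisc} and Proposition~\ref{prop:hypersurface-select}.
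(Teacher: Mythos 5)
Your overall route is the same as the paper's: pass to the low-dimensional part of $X\cap(W\times\Omega_y)$, cover it by Weierstrass polydiscs via Corollary~\ref{cor:pfaff-polydisc} (using Remark~\ref{rem:pfaff-polydisc-S} to keep $S$ fixed and letting the degree grow to $\max(\beta,d)$), apply Proposition~\ref{prop:hypersurface-select} on each polydisc, take $V$ to be $W$ intersected with the union of the resulting hypersurfaces together with a degree-$d$ hypersurface absorbing $\Sing W$ from Lemma~\ref{lem:W-degrees}(2). The degree bookkeeping matches $\lambda(k)$. However, two steps as written do not go through.

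First, the choice $\vf=L\circ\pi$ with a \emph{generic} linear $L:\C^m\to\C^k$ breaks the arithmetic input of Proposition~\ref{prop:hypersurface-select}: that proposition (via Lemma~\ref{lem:id-lower-bd}) controls $\cF$-points of $H^\size\le H$ in the \emph{image} $\vf(Z'\cap\Delta^2)$, so you need $L$ to carry the $\cF$-points $p$ of height $\le H$ to $\cF$-points of comparably bounded height. A generic $L$ (even a generic $\Q$-rational one) does not do this. The paper avoids the issue by choosing, for each irreducible component of $W$ separately, a dominant projection onto $k$ of the \emph{coordinates}; if you insist on a single $L$ you must take it with integer entries of size bounded in terms of $d,m,k$ and track the resulting height distortion. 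Second, your verification introduces a third case --- $\tilde p$ lying in the ``singular/critical locus of a dim-$k$ component of $\tilde X$'' --- which you propose to absorb by adjoining Jacobian equations ``without degrading the estimates.'' That is not justified: adjoining equations increases $S$, which enters $\lambda(k)$, and the critical locus of $\pi$ on a single component is not cut out by globally defined equations of controlled Pfaffian complexity. The case is in fact vacuous and should be eliminated rather than absorbed: since $X\subset X_\Gamma$ and $X_\Gamma$ agrees with the graph $\Gamma$ over $U$, the germ of $X\cap(W\times\Omega_y)$ at $\tilde p=\tilde\psi(p)$ is contained in $\tilde\psi(W_p)$, which for $p\notin\Sing W$ is an irreducible smooth $k$-dimensional germ. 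Hence either that germ is all of $\tilde\psi(W_p)$, in which case $W_p\subset Y$ and $p\in Y(W)$, or it has dimension $<k$, in which case $\tilde p\in Z$. This trichotomy ($p\in Y(W)$, or $p\in\Sing W$, or $\tilde\psi(p)\in Z$) is exactly what the paper uses, and with it your argument closes; your related assertion that $\dim\tilde X\le k$ globally is likewise unjustified (the graph condition only constrains $X$ over $U$) but is not needed once you argue germ by germ over $\bar\Gamma$.
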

\begin{proof}
  Set
  \begin{equation}
    Z:=(X\cap(W\times\Omega_y))^{<k}.
  \end{equation}
  Let $q\in Y\cap W$ and suppose that $q\not\in\Sing W$ and
  $q\not\in Y(W)$. Then the germ $W_q$ of $W$ at $q$ is smooth
  $k$-dimensional and not contained in $Y$. Equivalently, its image
  $\tilde\psi(W_q)$ is the germ of a smooth $k$-dimensional analytic set
  at $\tilde\psi(q)$ which is not contained in $X$. Since we assume
  $X\subset\Gamma$ around $\tilde\psi(q)$ we deduce that the dimension
  of
  \begin{equation}
    X\cap(W_q\times\Omega_y) = X\cap\Gamma\cap(W_q\times\Omega_y)=X\cap\tilde\psi(W_q)
  \end{equation}
  at $\tilde\psi(q)$ is strictly smaller than $k$, i.e.
  $\tilde\psi(q)\in Z$. In conclusion,
  \begin{equation}\label{eq:exploring-proj-1}
    Y\cap W\subset Y(W)\cup \Sing W \cup \pi(Z).
  \end{equation}

  By Lemma~\ref{lem:W-degrees} there exists a hypersurface
  $\cH_0\subset\C^m$ of degree at most $d$ containing $\Sing W$ and
  not containing $W$. Also, the holomorphic-Pfaffian variety
  $X\cap (W\times\Omega_y)$ is cut out by the equations for $X$ and a
  set of additional polynomials equations (in $\vx$) of degrees
  bounded by $d$.
  
  Let $p\in\bar\Gamma$. Since $\Gamma\subset\Omega$ is relatively
  compact there exists a Euclidean ball $B_p\subset\Omega$ around $p$
  of radius $\Omega(1)$. Slightly shrinking $B_p$ if necessary we may
  also assume $B_p^{1/3}\subset\Omega$. By
  Corollary~\ref{cor:pfaff-polydisc} there exists an analytic set
  $Z'\subset B_p$ of pure dimension $k-1$ satisfying $Z\subset Z'$, and
  a Weierstrass polydisc $\Delta$ for $Z$ such that
  \begin{enumerate}
  \item $B^\eta\subset\Delta\subset B$ where $\eta=O(d^\theta)$
    and $\theta = \theta(n+m,k-1,2S(r+1),\ell)$.
  \item $e(Z',\Delta)=O(d^{n+m+2S(r+1)+\ell})$.
  \end{enumerate}
  Note that in the estimate above we use $S$, the number of
  sub-Pfaffian holomorphic equations for $X$, and do not count the
  additional equations used to define $W$. This is permissible in
  light of Remark~\ref{rem:pfaff-polydisc-S} and improves the
  asymptotics.

  Assume first that $W$ is irreducible. Then one can choose a subset
  of $k$ coordinates on $\C^m$, say $\vf=(x_1,\ldots,x_k)$ such that
  $\vf:W\to\C^k$ is dominant and in particular no non-zero polynomial
  in $\vf$ vanishes identically on $W$. We apply
  Proposition~\ref{prop:hypersurface-select} to $Z'$ and $\vf$. We
  conclude that
  \begin{equation}\label{eq:exploring-proj-2}
    [\pi(Z'\cap B_p^{2\eta})]^\size(\cF,H)\subset\{P_p(\vf)=0\}
  \end{equation}
  for some non-zero polynomial $P_p(\vf)$ of degree $\tilde d$, where
  \begin{equation}
    \begin{split}
      \tilde d &= O(e(Z',\Delta)^{n+m-k+1} (\log H)^{k-1}) \\ 
               &= O(d^{(n+m+2S(r+1)+\ell)(n+m-k+1)} (\log H)^{k-1}).
    \end{split}
  \end{equation}
  Finally, since the ball $B_p^{2\eta}$ has radius
  $\Omega(\eta^{-1})$, one can choose a covering of $\bar\Gamma$ by
  $O(\eta^{n+m})$ such balls. We let $\cH$ be the union of the
  corresponding hyperplanes $\{P_p(\vf)=0\}$, and take $V'=W\cap\cH$
  and $V=V'\cup(W\cap \cH_0)$. Then the degree estimates follow from
  the Bezout theorem and the statement follows
  from~\eqref{eq:exploring-proj-1},~\eqref{eq:exploring-proj-2} and
  the choice of $\cH_0$.

  If $W$ is reducible with components $W_i$ then we may repeat the
  construction above for each $W_i$ separately, and take $V'$ to be
  the union of the resulting $V_i'$ and $V=V'\cup(W\cap \cH_0)$ as
  before. The degree estimates in this case are only improved.
\end{proof}

\begin{proof}[Proof of Theorem~\ref{thm:exploring-projection}.]
  Define $\kappa(m),\ldots,\kappa(0)$ by
  \begin{align}
    \kappa(m)&=0 & \kappa(k) &= k-1+\lambda(k)\kappa(k+1)
  \end{align}
  Apply Proposition~\ref{prop:exploring-projection} with $W=V_m:=\C^m$
  to obtain an algebraic variety $V_{m-1}\subset\C^m$ of pure
  dimension $m-1$ and degree $O((\log H)^{\kappa(m-1)})$ such that
  \begin{equation}
    Y^\size(\cF,H) \subset Y(V_m)\cup V_{m-1}.
  \end{equation}
  Apply Proposition~\ref{prop:exploring-projection} again with
  $W=V_{m-1}$ to obtain an algebraic variety $V_{m-2}\subset\C^m$ of
  pure dimension $m-2$ and degree $O((\log H)^{\kappa(m-2)})$ such
  that
  \begin{equation}
    (Y\cap V_{m-1})^\size(\cF,H) \subset Y(V_{m-1})\cup V_{m-2}.
  \end{equation}
  Repeating similarly we obtain an algebraic variety $V_k\subset\C^m$
  of pure dimension $k$ and degree $O((\log H)^{\kappa(k)})$ such
  that
  \begin{equation}\label{eq:W_k-def}
    (Y\cap V_k)^\size(\cF,H) \subset Y(V_k)\cup V_{k-1}
  \end{equation}
  where $V_{-1}=\emptyset$. Finally using $Y(V_0)=Y\cap V_0$
  and~\eqref{eq:W_k-def} gives
  \begin{equation}
    Y^\size(\cF,H) \subset Y(V_0)\cup\cdots\cup Y(V_m).
  \end{equation}
  An easy estimate on $\kappa(k)$ finishes the proof:
  as $\kappa(k)+1\le \lambda(k)\left(\kappa(k+1)+1\right)$, we have 
  \begin{equation*}
  \kappa(0)\le\prod_{k=0}^{m-1}\lambda(k)\le 3^{m}\left(m+n+2S(r+1)+\ell+1\right)^{3m}.
  \end{equation*}
\end{proof}

\section{Definable sets in $\R^\RE$ and the language $L^D_\RE$}
\label{sec:definable}

Let $I=[-1,1]$. For $m\in\N$ we let $\R^\RE\{X_1,\ldots,X_m\}$ denote
the ring of power series $f\in\R[[X_1,\ldots,X_m]]$ such that
\begin{enumerate}
\item $f$ converges in a neighborhood of $I^m$.
\item For every point $p\in I^m$ there is a polydisc $\Delta_p$ around
  $p$ such that $f$ converges in $\Delta_p$ to a holomorphic function,
  whose graph (in $\Delta_p$) is definable in $\R^\RE$.
\end{enumerate}
We remark that \cite{vdd:Rre} requires \emph{strong definability} in
item 2 above, but this is in fact equivalent to definability by the
main result of \cite{vdd:Rre}. By
Proposition~\ref{prop:Rre-subPfaffian} and the compactness of $I^m$,
for every every function $f\in\R^\RE\{X_1,\ldots,X_m\}$ there exists a
complex neighborhood $\Omega_f$ of $I^m$ such that $f$ is holomorphic
and sub-Pfaffian in $\Omega_f$.

We recall the language $L^D_\RE$ of \cite{vdd:Rre}. There is a countable set
of variables $\{X_1,X_2,\ldots\}$, a relation symbol $<$ and a binary
operation symbol $D$, and an $m$-ary operation symbol $f$ for every
$f\in\R^\RE\{X_1,\ldots,X_m\}$ satisfying $f(I^m)\subset I$. We view $I$
as an $L^D_\RE$-structure by interpreting $<$ and $f$ in the obvious
way and interpreting $D$ as \emph{restricted division}, namely
\begin{equation}
  D(x,y) =
  \begin{cases}
    x/y & |x|\le|y| \text{ and } y\neq0 \\
    0 & \text{otherwise.}
  \end{cases}
\end{equation}
We denote by $L_\RE$ the language obtained from $L_\RE^D$ by omitting
$D$.

For every $L_\RE^D$-term $t(X_1,\ldots,X_m)$ we have an associated map
$t:I^m\to I$ which we denote $x\to t(x)$.
\begin{Lem}\label{lem:Lre-term}
  Let $t(X_1,\ldots,X_m)$ be an $L_\RE$ term. Then there is a complex
  neighborhood $\Omega$ of $I^m$ such that $t$ corresponds to a
  holomorphic sub-Pfaffian function in $\Omega$.
\end{Lem}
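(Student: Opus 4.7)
The plan is to proceed by induction on the construction of the term $t$, with the inductive invariant being exactly the conclusion: there is a complex neighborhood of $I^m$ on which the term extends to a holomorphic sub-Pfaffian function.

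For the base case, if $t = X_i$ is a variable then $t$ extends to the coordinate projection $\C^m \to \C$, which is holomorphic everywhere and whose graph is algebraic (hence sub-Pfaffian) in the trivial Pfaffian chain. For the inductive step, suppose $t = f(t_1,\ldots,t_k)$ where $f \in \R^\RE\{Y_1,\ldots,Y_k\}$ with $f(I^k) \subset I$ and each $t_i$ is a strictly shorter $L_\RE$-term. By the inductive hypothesis, each $t_i$ extends to a holomorphic sub-Pfaffian function $\tilde t_i$ on some complex neighborhood $\Omega_i$ of $I^m$. By the definition of $\R^\RE\{Y_1,\ldots,Y_k\}$ recalled just before the lemma, $f$ extends to a holomorphic sub-Pfaffian function $\tilde f$ on some complex neighborhood $\Omega_f$ of $I^k$.

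The key coordination step is to arrange a common complex neighborhood $\Omega \subset \bigcap_i \Omega_i$ of $I^m$ on which the composition is well-defined. Since $t_i(I^m) \subset I$ on real points (this follows by induction, using that each function symbol $f$ satisfies $f(I^k)\subset I$) and each $\tilde t_i$ is continuous, compactness of $I^m$ and continuity let us shrink $\Omega$ so that $(\tilde t_1(\Omega),\ldots,\tilde t_k(\Omega)) \subset \Omega_f$. On this $\Omega$ the composition $\tilde t := \tilde f \circ (\tilde t_1,\ldots,\tilde t_k)$ is a well-defined holomorphic function extending $t$.

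It remains to verify that $\tilde t$ is sub-Pfaffian, i.e.\ that its graph (identifying $\C \cong \R^2$) is a sub-Pfaffian subset of $\R^{2m+2}$. This is immediate from the fact that the graph is an existential projection of a conjunction of sub-Pfaffian conditions:
\begin{equation*}
\Gamma_{\tilde t} = \pi\bigl\{(\vx,\vz,y) : (\vx,z_i) \in \Gamma_{\tilde t_i} \text{ for } i=1,\ldots,k,\ (\vz,y) \in \Gamma_{\tilde f}\bigr\},
\end{equation*}
where $\pi$ forgets the auxiliary $\vz$ variables; by closure of the sub-Pfaffian category under conjunction and projection this is sub-Pfaffian. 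The only nontrivial point is the coordination of neighborhoods described above; everything else is a direct unwinding of the definitions, so I do not expect any genuine obstacle.
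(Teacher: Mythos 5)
Your proof is correct and follows essentially the same route as the paper: induction on the term structure, shrinking the neighborhoods by compactness so the composition is defined, and invoking closure of sub-Pfaffian functions under composition (which you helpfully make explicit by writing the graph as a projection of a conjunction of sub-Pfaffian graphs). No issues.
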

\begin{proof}
  We prove the claim by induction: if $t=X_i$ then the claim is
  obvious. Suppose $t=f(T_1,\ldots,T_j)$ where
  $f\in\R^\RE\{Y_1,\ldots,Y_j\}$ and the claim is proved for
  $T_1,\ldots,T_j$. Then there exists a complex neighborhood
  $\Omega_f$ of $I^j$ such that $f$ is holomorphic and sub-Pfaffian in
  $\Omega_f$, and complex neighborhoods $\Omega_i$
  of $I^m$ such that $T_i$ corresponds to a holomorphic sub-Pfaffian
  function in $\Omega_i$. Since $T_i(I^n)\subset I$ we may, shrinking
  $\Omega_i$ if necessary, assume that $(T_1,\ldots,T_j)$ maps
  $\Omega:=\Omega_1\times\cdots\times\Omega_j$ to $\Omega_f$. Then $t$
  corresponds to a holomorphic function in $\Omega$, and since
  sub-Pfaffian functions are closed under composition it is also
  sub-Pfaffian.
\end{proof}

For an $L_\RE^D$-formula $\phi(X_1,\ldots,X_m)$ we write $\phi(I^m)$
for the set of points $x\in I^m$ satisfying $\phi$. If $A\subset I^m$
we write $\phi(A):=\phi(I^m)\cap A$. We will use the following key
result of \cite{vdd:Rre}.
\begin{Thm}\label{thm:denef-vdd}
  $I$ has elimination of quantifiers in $L^D_\RE$. 
\end{Thm}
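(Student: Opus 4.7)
The plan is to follow the strategy of Denef and van den Dries \cite{denef-vdd} for subanalytic sets, as adapted to the $\R^\RE$ setting in \cite{vdd:Rre}. One eliminates existential quantifiers one at a time: it suffices to show that for any quantifier-free $L^D_\RE$-formula $\phi(X,Y)$ with parameters $X \in I^m$ and a single additional variable $Y \in I$, the projection $\{X : \exists Y\, \phi(X,Y)\}$ is defined by a quantifier-free $L^D_\RE$-formula. I would put $\phi$ in disjunctive normal form and reduce to a single conjunction of atomic and negated-atomic formulas involving $L^D_\RE$-terms in $(X,Y)$.

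Next I would establish the closure properties of the ring $\R^\RE\{X_1,\ldots,X_m\}$ needed for a uniform Weierstrass preparation. Around any point $p \in I^m$, any $f \in \R^\RE\{X_1,\ldots,X_m\}$ which is regular of some finite order $d$ in $Y = X_m$ admits a factorization $f = u \cdot P$, where $u$ is a unit of the ring and $P$ is a distinguished polynomial of degree $d$ in $Y$ whose coefficients lie in $\R^\RE\{X_1,\ldots,X_{m-1}\}$. The analytic content is the classical Weierstrass theorem; the crucial check is that $u$ and the coefficients of $P$ remain sub-Pfaffian, which holds because they are expressible through the Weierstrass division integrals and sub-Pfaffianness is preserved under such operations (together with composition, as in the proof of Lemma~\ref{lem:Lre-term}). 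By compactness of $I^m$ I would patch the local preparations into a finite semialgebraic cover of $I^m$, with each piece carrying its own factorization.

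After preparation, on each piece of the cover, any atomic formula involving $f(X,Y)$ becomes, up to multiplication by a unit, a polynomial sign condition in $Y$ whose coefficients are $L_\RE$-terms in $X$. Elimination of $\exists Y$ then reduces to real-closed-field quantifier elimination (Tarski--Seidenberg) applied to this polynomial system, giving a Boolean combination of polynomial sign conditions on the coefficients. The role of the restricted division $D$ is precisely to allow one to express the coefficient ratios arising in Weierstrass preparation — in particular, the case split according to which Taylor coefficient of $f$ in $Y$ is dominant at a given $X$ — as honest $L^D_\RE$-terms, despite denominators that may vanish: the side condition $|x| \le |y|$ guarding $D(x,y) = x/y$ ensures all intermediate quantities stay bounded in $I$.

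The main obstacle will be making the Weierstrass preparation uniform and keeping every resulting formula within $L^D_\RE$. Concretely, the finite cover of $I^m$ arising from the local preparations, the case splits identifying the dominant Taylor coefficient, and the resulting coefficients of the distinguished polynomial $P$ must all be exhibited as quantifier-free $L^D_\RE$-formulas and terms. This requires a careful inductive bookkeeping over the construction of $\phi$, using that $\R^\RE\{X_1,\ldots,X_m\}$ is closed under partial derivatives (to access the Taylor coefficients in $Y$) and under the case-by-case substitution by $D$-quotients. Once this is set up, the inductive elimination of quantifiers proceeds in routine fashion and yields the theorem.
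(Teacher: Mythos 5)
First, a point of orientation: the paper does not prove this statement at all --- Theorem~\ref{thm:denef-vdd} is quoted verbatim as the main quantifier-elimination result of van den Dries \cite{vdd:Rre}, which in turn adapts the method of Denef and van den Dries \cite{denef-vdd}. So there is no in-paper proof to compare against; your proposal is an attempt to reconstruct the external argument. Your outline does follow the correct general strategy of that argument: reduce to a single existential quantifier, perform a uniform Weierstrass preparation in the quantified variable, finish with Tarski--Seidenberg, and use $D$ to absorb the coefficient ratios.

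That said, two steps in your sketch are genuine gaps rather than routine bookkeeping. First, at the closure step you propose to check that the Weierstrass unit $u$ and the coefficients of the distinguished polynomial $P$ are \emph{sub-Pfaffian}; but membership in the ring $\R^\RE\{X_1,\ldots,X_m\}$ requires that their local graphs be \emph{$\R^\RE$-definable}, and sub-Pfaffian sets form a strictly larger class than $\R^\RE$-definable ones (Proposition~\ref{prop:Rre-subPfaffian} gives only one inclusion; sub-Pfaffianness is what the paper needs for the point-counting arguments, not for quantifier elimination). The correct argument establishes definability of the Weierstrass data directly --- e.g.\ the coefficients of $P$ are, up to sign, the elementary symmetric functions of the roots of $f(x',\cdot)$ in the relevant disc and hence definable from $f$, and $u$ is the quotient --- together with an analyticity argument to place the resulting germs back in the ring. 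Second, the step you defer as ``the main obstacle'' --- exhibiting the finite cover of $I^m$, the dominant-coefficient case splits, and the prepared coefficients as quantifier-free $L^D_\RE$-formulas and terms, uniformly over $I^m$ --- is not an afterthought: it is where most of the work of \cite{denef-vdd,vdd:Rre} lives, carried out by an induction on the complexity of terms together with special substitutions of the form $X_i = X_j Y$ with $|Y|\le 1$ (exactly the situations the guarded division $D$ is designed for). As written, your proposal records the correct shape of the known proof but leaves both of these load-bearing steps unproved.
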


\subsection{Admissible formulas}

Let $U\subset\mathring{I}^m$ be an open subset. We define the notion
of an $L_\RE^D$-term \emph{admissible} in $U$ by recursion as follows:
a variable $X_j$ is always admissible in $U$; a term
$f(t_1,\ldots,t_m)$ is admissible in $U$ if and only if the terms
$t_1,\ldots,t_m$ are admissible in $U$; and a term $D(t_1,t_2)$ is
admissible in $U$ if $t_1,t_2$ are admissible in $U$ and if
\begin{equation}
  |t_1(x)| \le |t_2(x)| \text{ and } t_2(x)\neq0
\end{equation}
for every $x\in U$. An easy induction gives the following.
\begin{Lem}\label{lem:admissible-term-analytic}
  If $t$ is admissible in $U$ then the map $t:U\to I$ is real
  analytic.
\end{Lem}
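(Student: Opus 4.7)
The plan is to prove the lemma by induction on the structure of the admissible term $t$, following the three cases in the recursive definition of admissibility.

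For the base case, if $t = X_j$ then the map $t \colon U \to I$ is a coordinate projection and is clearly real analytic.

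For the first inductive case, suppose $t = f(t_1, \ldots, t_k)$ with $f \in \R^{\RE}\{Y_1, \ldots, Y_k\}$ and each $t_i$ admissible in $U$. By the inductive hypothesis each $t_i \colon U \to I$ is real analytic. Since $f$ is the restriction to $I^k$ of a power series converging in a neighborhood of $I^k$, it is real analytic on $I^k$, and hence $t(x) = f(t_1(x), \ldots, t_k(x))$ is real analytic on $U$ as a composition of real analytic maps.

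For the second inductive case, suppose $t = D(t_1, t_2)$ with $t_1, t_2$ admissible in $U$ and $|t_1(x)| \le |t_2(x)|$ and $t_2(x) \neq 0$ for every $x \in U$. By the inductive hypothesis $t_1$ and $t_2$ are real analytic on $U$. Under the admissibility conditions the interpretation of $D$ gives $D(t_1(x), t_2(x)) = t_1(x)/t_2(x)$ pointwise on $U$, and since the denominator $t_2$ does not vanish, this quotient is real analytic on $U$. This completes the induction.

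The argument is entirely routine; there is no real obstacle, since the admissibility condition on $D$ was designed exactly so that the ``otherwise'' branch of the definition of $D$ is never triggered on $U$, turning $D(t_1, t_2)$ into honest division by a nonvanishing analytic function.
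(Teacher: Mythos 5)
Your proof is correct and is precisely the ``easy induction'' that the paper invokes without writing out: the base case of variables, composition with the real analytic functions $f$, and the observation that admissibility forces the division branch of $D$ with a nonvanishing analytic denominator. No issues.
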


We will say that an $L_\RE^D$-formula $\phi$ is admissible in $U$ if
all terms appearing in $\phi$ are admissible in $U$. The following
proposition shows that when considering definable subsets of $I$ one
can essentially reduce to admissible formulas. The proof is
identical to that of \cite[Proposition~20]{me:interpolation}.

\begin{Prop}\label{prop:admissible-decomp}
  Let $U\subset\mathring I^m$ be an open subset and
  $\phi(X_1,\ldots,X_m)$ a quantifier-free $L_\RE^D$-formula. There
  exist open subsets $U_1,\ldots,U_k\subset U$ and quantifier-free
  $L_\RE^D$-formulas $\phi_1,\ldots,\phi_k$ such that $\phi_j$ is
  admissible in $U_j$ and
  \begin{equation}
    \phi(U) = \bigcup_{j=1}^k \phi_j(U_j).
  \end{equation}
\end{Prop}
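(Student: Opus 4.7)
\emph{Plan.} I would prove the proposition by induction on a natural complexity measure, for instance the number of $D$-subterms of $\phi$ that are not admissible in $U$. In the base case this measure is $0$, so $\phi$ is itself admissible in $U$ and the trivial decomposition $k=1$, $U_1=U$, $\phi_1=\phi$ suffices.

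In the inductive step I would locate a $D$-subterm $D(t_1,t_2)$ of $\phi$ which is innermost among the non-admissible ones, so that $t_1,t_2$ are admissible in $U$ while $D(t_1,t_2)$ itself is not. By Lemma~\ref{lem:admissible-term-analytic}, $t_1$ and $t_2$ are real-analytic on $U$, so one may split $U$ into the two open semianalytic sets $V_<:=\{x\in U:|t_1(x)|<|t_2(x)|\}$ and $V_>:=\{x\in U:|t_1(x)|>|t_2(x)|\}$ together with the closed semianalytic set $V_=:=\{x\in U:|t_1(x)|=|t_2(x)|\}$. On $V_<$ the strict inequality forces both $|t_1|\le|t_2|$ and $t_2\neq 0$, so $D(t_1,t_2)$ becomes admissible there and the complexity measure strictly decreases; the inductive hypothesis applied to $(\phi,V_<)$ then yields the desired decomposition. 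On $V_>$ the interpretation of $D(t_1,t_2)$ is identically zero, so substituting the constant term $0$ for every occurrence of $D(t_1,t_2)$ gives a formula $\phi_>$ with one fewer $D$-symbol that agrees with $\phi$ throughout $V_>$; the inductive hypothesis applied to $(\phi_>,V_>)$ completes that piece.

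The main obstacle is the closed set $V_=$, which is not open and so cannot directly serve as some $U_j$. I would handle it by exploiting real-analyticity of $t_1,t_2$ together with the factorization $t_1^2-t_2^2=(t_1-t_2)(t_1+t_2)$: on $V_=$ one of the two factors vanishes, and on each resulting branch the value of $D(t_1,t_2)$ is $0$ when $t_2=0$ (which forces $t_1=0$) and $\pm 1$ when $t_2\neq 0$. For each pair consisting of a value $c\in\{0,+1,-1\}$ and a corresponding pinning $L_\RE$-conjunct $\sigma_c$ (for instance $\sigma_{+1}:=(t_1=t_2)\wedge(t_2\neq 0)$), I would form $\phi_c:=\phi[D(t_1,t_2)\leftarrow c]\wedge\sigma_c$: this has strictly fewer $D$-symbols, and the added conjunct guarantees that $\phi_c(U)\subseteq\phi(U)\cap V_=$ while, collectively, $\bigcup_c\phi_c(U)=\phi(U)\cap V_=$. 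Applying the inductive hypothesis to each $(\phi_c,U)$ yields admissible decompositions whose union with those already obtained on $V_<$ and $V_>$ produces the required finite admissible decomposition of $\phi(U)$.
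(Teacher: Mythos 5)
Your argument is correct and follows essentially the approach the paper takes (the paper defers the proof to \cite[Proposition~20]{me:interpolation}, but the three-way case split on each restricted division --- strict inequality, reversed inequality where $D$ evaluates to $0$, and the equality locus where $D$ is pinned to $0$ or $\pm1$ by an added $L_\RE$-conjunct --- together with induction on the number of non-admissible $D$-subterms is exactly the intended mechanism, as sketched in the introduction). The one point worth making explicit is that any $D$-subterm admissible in $U$ cannot contain the chosen non-admissible innermost one, so the substitutions genuinely decrease the complexity measure; you implicitly use this and it does hold.
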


\subsection{Basic formulas and equations}

We say that $\phi$ is a \emph{basic $D$-formula} if it has
the form
\begin{equation}
  \big(\land_{j=1}^k t_j(X_1,\ldots,X_m)=0\big)\land\big(\land_{j=1}^{k'} s_j(X_1,\ldots,X_m)>0\big)
\end{equation}
where $t_j,s_j$ are $L_\RE^D$-terms. It is easy to check the
following.

\begin{Lem}\label{lem:basic-disjunction}
  Every quantifier free $L_\RE^D$-formula $\phi$ is equivalent in the structure
  $I$ to a finite disjunction of basic formulas. If $\phi$ is
  $U$-admissible then so are the basic formulas in the disjunction.
\end{Lem}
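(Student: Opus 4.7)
The plan is a direct disjunctive normal form argument combined with an elementary sign bookkeeping step. First I would put $\phi$ in disjunctive normal form: every quantifier-free formula in a first-order language with equality is logically equivalent to a finite disjunction of conjunctions of literals. In $L_\RE^D$ the atomic formulas (modulo logical equality) take only the two shapes $t_1=t_2$ and $t_1<t_2$, so the literals one has to eliminate in favor of $=\!0$ and $>\!0$ are $t_1=t_2$, $t_1<t_2$, $t_1\neq t_2$ and $\neg(t_1<t_2)$.

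The key observation is that the map $(x,y)\mapsto (x-y)/2$ belongs to $\R^\RE\{X_1,X_2\}$: it is a polynomial, hence already definable in the reduct $(\R,<,+,\cdot)$ of $\R^\RE$; it converges on all of $\C^2$; and it sends $I^2$ into $I$. Consequently, for any two $L_\RE^D$-terms $t_1,t_2$ the expression $\tilde t:=(t_1-t_2)/2$ is again an $L_\RE^D$-term, obtained from $t_1,t_2$ by applying only a non-$D$ operation symbol. Using this I translate each literal into basic form: $t_1=t_2$ becomes $\tilde t=0$; $t_1<t_2$ becomes $(t_2-t_1)/2>0$; $t_1\neq t_2$ becomes the disjunction $(t_1-t_2)/2>0 \lor (t_2-t_1)/2>0$; and $\neg(t_1<t_2)$ becomes $(t_1-t_2)/2>0 \lor (t_1-t_2)/2=0$. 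Substituting these rewrites into the DNF of $\phi$ and distributing $\land$ over $\lor$ expresses $\phi$ as a disjunction of conjunctions each of the form $(\land_j \tilde t_j=0)\land(\land_j \tilde s_j>0)$, i.e.\ as a disjunction of basic $D$-formulas.

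For the admissibility statement I would just observe that all the sign-conversion rewrites apply only the $L_\RE$ operation $(x,y)\mapsto(x-y)/2$ to already-present subterms; no new occurrences of $D$ are introduced. By the inductive definition of admissibility, a term built from $U$-admissible subterms by a non-$D$ operation symbol is automatically $U$-admissible, since the only side-condition in the definition is imposed at $D$-nodes. Hence every term appearing in the resulting basic formulas inherits $U$-admissibility from the original terms of $\phi$.

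The one point to state carefully (and the only place one could slip) is the verification that the rescaled subtraction $(x-y)/2$ is actually in the signature of $L_\RE^D$; this is purely a cosmetic rescaling, needed because the operation symbols of $L_\RE^D$ are required to take values in $I$ rather than in $\R$. No deeper ingredient, in particular no use of the quantifier elimination Theorem~\ref{thm:denef-vdd} or any Pfaffian input, is required for this lemma.
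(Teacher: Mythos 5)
Your proof is correct, and it is exactly the routine argument the paper intends — the paper itself offers no proof beyond the remark ``It is easy to check the following.'' You correctly identify and handle the one point requiring care, namely that plain subtraction does not map $I^2$ into $I$ so one must use the rescaled operation $(x,y)\mapsto(x-y)/2$ from the $L_\RE$ signature, and your observation that admissibility is preserved because the rewriting introduces no new $D$-nodes matches Definition of admissibility in the paper.
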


We say that $\phi$ is a \emph{basic $D$-equation} if $k'=0$, i.e. if
it involves only equalities. If $\phi$ is a basic $D$-formula we
denote by $\tilde\phi$ the basic $D$-equation obtained by removing all
inequalities.

Let $\phi$ be a $U$-admissible basic $D$-formula for some
$U\subset I^m$. Then $\tilde\phi$ is $U$-admissible as well. Moreover
since all the terms $s_j$ evaluate to continuous functions in $U$ the
strict inequalities of $\phi$ are open in $U$ and we have the
following.

\begin{Lem}\label{lem:phi-tilde-rel-open}
  Suppose $\phi$ is $U$-admissible basic $D$-formula. Then $\phi(U)$
  is relatively open in $\tilde\phi(U)$.
\end{Lem}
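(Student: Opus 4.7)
The plan is to use the fact that admissibility makes all the inequality terms continuous on $U$, so the strict inequalities cut out a relatively open subset.

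First I would note that by Lemma~\ref{lem:admissible-term-analytic}, since $\phi$ is $U$-admissible, every term $s_j$ appearing in $\phi$ evaluates to a real analytic, and in particular continuous, function $s_j:U\to I$. The same applies to the terms $t_j$.

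Next, fix a point $x\in\phi(U)$. Then $t_j(x)=0$ for each $j=1,\ldots,k$ and $s_j(x)>0$ for each $j=1,\ldots,k'$. By continuity of each $s_j$ on $U$, there is an open neighborhood $V\subset U$ of $x$ on which $s_j>0$ simultaneously for all $j=1,\ldots,k'$. I would then observe that on $V$ the defining conditions of $\phi$ and $\tilde\phi$ differ only by the inequalities $s_j>0$, which hold throughout $V$; hence
\begin{equation}
\phi(U)\cap V = \tilde\phi(U)\cap V.
\end{equation}
This exhibits $\phi(U)$ as the intersection of $\tilde\phi(U)$ with an open subset of $U$ near $x$, so $\phi(U)$ is relatively open in $\tilde\phi(U)$ at $x$. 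Since $x\in\phi(U)$ was arbitrary, the conclusion follows.

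There is no real obstacle here: the entire content of the lemma is the openness of a finite conjunction of strict inequalities in continuous functions. The only point that requires the hypothesis of admissibility (rather than being a triviality for arbitrary $L_\RE^D$-terms) is that the inequality terms $s_j$ are actually continuous on $U$, which is exactly what Lemma~\ref{lem:admissible-term-analytic} provides.
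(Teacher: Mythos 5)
Your proof is correct and is essentially the argument the paper gives: admissibility guarantees (via Lemma~\ref{lem:admissible-term-analytic}) that the terms $s_j$ are continuous on $U$, so the strict inequalities cut out an open subset of $U$ and $\phi(U)$ is the intersection of $\tilde\phi(U)$ with that open set. Nothing is missing.
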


The set defined by an admissible $D$-equation can be described in
terms of admissible projections in the sense
of~\secref{sec:admissible-graph}.

\begin{Prop}\label{prop:D-equation-to-proj}
  Let $U\subset\mathring I^m$ and $\phi$ be a $U$-admissible $D$-equation,
  \begin{equation}
    \phi = (t_1=0)\land\cdots\land(t_k=0).
  \end{equation}
  In the notations of~\secref{sec:admissible-graph}, there exist
  \begin{enumerate}
  \item Complex domains $\Omega_x\subset\C^m$ and
    $\Omega_y\subset\C^N$ with $N\in\N$ and $I^m\subset\Omega_x$.
  \item An open complex neighborhood $U\subset U_\C\subset \Omega_x$.
  \item An analytic map $\psi:U_\C\to\Omega_y$.
  \item An analytic set $X\subset \Omega$.
  \end{enumerate}
  such that $\psi$ is admissible, the sets $X,X_\Gamma\subset\Omega$
  are sub-Pfaffian, and $Y:=\pi(X\cap\Gamma_\psi)$ satisfies
  $Y_\R=\phi(U)$.
\end{Prop}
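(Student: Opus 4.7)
The plan is to eliminate the restricted divisions appearing in the admissible terms $t_1,\ldots,t_k$ by a systematic change of variables, converting each occurrence of $D$ into an auxiliary variable constrained by a polynomial relation, and then invoke Lemma~\ref{lem:Lre-term} to pass to the complex sub-Pfaffian setting.

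More concretely, I would first enumerate all the subterms of $t_1,\ldots,t_k$ of the form $D(a_j,b_j)$ for $j=1,\ldots,N$, ordered so that $a_j$ and $b_j$ contain only $D$-occurrences with index smaller than $j$. For each such occurrence introduce a fresh variable $Y_j$, ranging over $I$, and replace every appearance of $D(a_j,b_j)$ in the formulas by $Y_j$. Writing $\hat a_j,\hat b_j,\hat t_i$ for the resulting $L_\RE$-terms (now free of $D$) in the variables $X_1,\ldots,X_m,Y_1,\ldots,Y_{j-1}$ and $X_1,\ldots,X_m,Y_1,\ldots,Y_N$ respectively, Lemma~\ref{lem:Lre-term} provides complex neighborhoods $\Omega_x\supset I^m$ and $\Omega_y\supset I^N$ in which all of these extend to holomorphic, sub-Pfaffian functions. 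Shrinking if necessary we may assume $\Omega=\Omega_x\times\Omega_y$ is bounded and that all of the $\hat a_j,\hat b_j,\hat t_i$ remain holomorphic on a slight enlargement.

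Next, I define the map $\psi:U_\C\to\Omega_y$ recursively by $\psi(x)_j := \hat a_j(x,\psi(x)_1,\ldots,\psi(x)_{j-1})/\hat b_j(x,\psi(x)_1,\ldots,\psi(x)_{j-1})$. The admissibility hypothesis on $\phi$ guarantees $|\hat a_j|\le|\hat b_j|$ and $\hat b_j\neq 0$ on $U$, so by Lemma~\ref{lem:admissible-term-analytic} and inductive continuity we can choose a connected open complex neighborhood $U_\C\subset\Omega_x$ of $U$ on which each $\hat b_j$ is non-vanishing and on which the resulting $\psi(x)$ lies in $\Omega_y$ (shrinking $U_\C$ if necessary to keep $\Gamma_\psi$ relatively compact in $\Omega$). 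Setting
\begin{equation}
X_\Gamma := \bigcap_{j=1}^N \{Y_j \hat b_j = \hat a_j\} \subset \Omega, \qquad X := X_\Gamma \cap \bigcap_{i=1}^k \{\hat t_i = 0\},
\end{equation}
we obtain analytic subsets of $\Omega$. Their defining functions are holomorphic sub-Pfaffian by Lemma~\ref{lem:Lre-term}, so $X$ and $X_\Gamma$ are sub-Pfaffian. Since $\hat b_j$ is invertible on $U_\C$, the equation $Y_j \hat b_j=\hat a_j$ uniquely determines $Y_j$ there, giving $X_\Gamma\cap\pi^{-1}(U_\C)=\Gamma_\psi$ and verifying that $\psi$ is admissible in the sense of Definition~\ref{def:admissible}.

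Finally, to see $Y_\R=\phi(U)$, observe that a real point $x\in U$ lies in $Y_\R$ iff $(x,\psi(x))\in X$, which by construction of $\psi$ means precisely that $\hat t_i(x,\psi(x))=t_i(x)=0$ for $i=1,\ldots,k$; this is exactly $\phi(x)$. I expect the main obstacle to be purely bookkeeping: ensuring that the various shrinkings of $\Omega_x,\Omega_y,U_\C$ can be carried out simultaneously so that $\Gamma_\psi$ is relatively compact in $\Omega$ and every $\hat b_j$ remains non-vanishing on $U_\C$; this is where one uses the compactness of $\bar U$'s relevant subsets and the chain-rule style induction on the depth of nesting of $D$-subterms.
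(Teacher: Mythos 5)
Your construction is exactly the one the paper intends: the paper's own proof simply defers to \cite[Proposition~23]{me:interpolation}, and the introduction of the paper describes precisely your move of enumerating the $D$-subterms innermost-first, replacing each $D(a_j,b_j)$ by a fresh variable $Y_j$ subject to $Y_j\hat b_j=\hat a_j$, and using Lemma~\ref{lem:Lre-term} to supply the holomorphic sub-Pfaffian extensions. The one loose end is in your final step: what you verify is $Y_\R\cap U=\phi(U)$, but $Y_\R$ a priori lies in $U_\C\cap\R^m$, which may strictly contain $U$, and at real points of $U_\C$ outside $U$ the quotient $\hat a_j/\hat b_j$ need not agree with the restricted division $D(a_j,b_j)$, so spurious points could enter $Y_\R$. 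This is repaired in one line: replace $U_\C$ by $U_\C\setminus(\R^m\setminus U)$, which is still open in $\C^m$ (since $\R^m\setminus U$ is closed in $\C^m$) and satisfies $U_\C\cap\R^m=U$, after which your argument gives the claimed equality.
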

\begin{proof}
  The proof is essentially the same as the proof in
  \cite[Proposition~23]{me:interpolation} for the language
  $L^D_{\mathrm{an}}$. We note that in \cite{me:interpolation} this
  proposition is proved with an extra set of parameters $\Lambda$, and
  in the current context one can take $\Lambda$ to be a singleton. To
  see that the sets $X,X_\Gamma$ are also sub-Pfaffian use
  Lemma~\ref{lem:Lre-term}.
\end{proof}

\subsection{Estimate for $L_\RE^D$-definable sets}

\begin{Prop}\label{prop:Lre-main}
  Let $A\subset\mathring{I}^m$ be an $L_\RE^D$-definable set. There
  exist integers $\kappa=\kappa(A)$ and $N=N(A,[\cF:\Q])$ with the
  following property: for any $H\in\N$ there exist at most
  $\beta:=N(\log H)^\kappa$ smooth connected semialgebraic subsets
  $S_\alpha\subset\R^m$ with complexity $(m,\beta,\beta)$ such that
  \begin{equation}
    A(\cF,H)\subset \bigcup_\alpha A(S_\alpha).
  \end{equation}
\end{Prop}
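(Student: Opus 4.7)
The plan is to reduce $A$, via quantifier elimination and admissibility, to finitely many sets arising as images of holomorphic-Pfaffian varieties under admissible projections; then invoke Theorem~\ref{thm:exploring-projection} to cover their $\cF$-points of bounded height by low-degree algebraic varieties; and finally stratify these into smooth connected semialgebraic cells, verifying for each rational point that it lies in $A(S_\alpha)$ for its cell.

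By Theorem~\ref{thm:denef-vdd}, $A=\phi(\mathring I^m)$ for a quantifier-free $L_\RE^D$-formula $\phi$. Applying Proposition~\ref{prop:admissible-decomp} followed by Lemma~\ref{lem:basic-disjunction}, I obtain a finite decomposition
\[
A = \bigcup_j \phi_j(U_j),
\]
where each $U_j\subset\mathring I^m$ is open and each $\phi_j$ is a $U_j$-admissible basic $D$-formula. Let $\tilde\phi_j$ denote the $D$-equation obtained by deleting the inequalities from $\phi_j$. By Proposition~\ref{prop:D-equation-to-proj}, $\tilde\phi_j(U_j)=Y_{j,\R}$, where $Y_j:=\pi(X_j\cap\Gamma_{\psi_j})$ is the image under an admissible projection of a holomorphic-Pfaffian variety $X_j$, and all associated sub-Pfaffian complexities depend only on $A$.

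Set $H':=H^{[\cF:\Q]}$, so that $A(\cF,H)\subset A^\size(\cF,H')$ by (\ref{eq:H-vs-Hsize}). Applying Theorem~\ref{thm:exploring-projection} to each $Y_j$ at height $H'$ produces algebraic varieties $V_{j,0},\ldots,V_{j,m}$, with $V_{j,k}$ of pure dimension $k$ and degree $O((\log H)^{\kappa_0})$, satisfying
\[
Y_j^\size(\cF,H') \subset Y_j(V_{j,0})\cup\cdots\cup Y_j(V_{j,m}),
\]
where $\kappa_0$ depends only on $A$ and the implicit constant depends on $A$ and $[\cF:\Q]$. By Lemma~\ref{lem:W-degrees} each $V_{j,k}$ is cut out by at most $m+1$ polynomials of degree $O((\log H)^{\kappa_0})$, so a standard smooth semialgebraic cell decomposition partitions $V_{j,k,\R}$ into smooth connected semialgebraic cells $S_\alpha\subset\R^m$ whose number, and whose individual semialgebraic complexity, are polynomial in $(\log H)^{\kappa_0}$ with exponents depending only on $m$. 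Collecting over $j,k$ gives a family $\{S_\alpha\}$ of cardinality and complexity bounded by $\beta=N(\log H)^\kappa$ for appropriate $N,\kappa$.

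It remains to verify $A(\cF,H) \subset \bigcup_\alpha A(S_\alpha)$, which is the delicate part of the argument. Given $p\in A(\cF,H)$, pick $j$ with $p\in\phi_j(U_j)$ and $k$ with $p\in Y_j(V_{j,k})$; the latter means the germ $(V_{j,k})_p$ is contained in $Y_j$. Letting $S_\alpha$ be the cell through $p$, I get $(S_\alpha)_p\subset (V_{j,k,\R})_p$, and by (\ref{eq:alg-part-R}) this germ lies in $(Y_{j,\R})_p=(\tilde\phi_j(U_j))_p$. The main point beyond Theorem~\ref{thm:exploring-projection} is to upgrade this from $\tilde\phi_j$ back to $\phi_j$: by Lemma~\ref{lem:phi-tilde-rel-open}, $\phi_j(U_j)$ is relatively open in $\tilde\phi_j(U_j)$, so since $p\in\phi_j(U_j)$ a sufficiently small germ of $\tilde\phi_j(U_j)$ at $p$, and hence the germ of $S_\alpha$ at $p$, is contained in $\phi_j(U_j)\subset A$. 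Thus $p\in A(S_\alpha)$, as required.
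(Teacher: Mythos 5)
Your proposal is correct and follows essentially the same route as the paper: quantifier elimination, reduction to $U$-admissible basic $D$-formulas, passing to the associated $D$-equation via Proposition~\ref{prop:D-equation-to-proj} and Theorem~\ref{thm:exploring-projection}, using relative openness (Lemma~\ref{lem:phi-tilde-rel-open}) to return from $\tilde\phi_j$ to $\phi_j$, and finally stratifying the real parts of the $V_{j,k}$ into smooth connected cells. The only differences are presentational (you verify the inclusion pointwise where the paper packages it through Lemma~\ref{lem:alg-part-rules}, and you assert the polynomial bound on the number and complexity of strata as standard where the paper cites Gabrielov--Vorobjov and Basu--Pollack--Roy).
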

\begin{proof}
  According to~\eqref{eq:H-vs-Hsize}, for any $\alpha\in\cF$ we have
  $H^\size(\alpha)\le H(\alpha)^t$ where $t:=[\cF:\Q]$. Thus
  \begin{equation}
    A(\cF,H) \subset A^\size(\cF,H^t).
  \end{equation}
  for every $H\in\N$. Therefore up to minor rescaling it will suffice
  to prove the claim with $H(\cdot)$ replaced by $H^\size(\cdot)$.

  By Theorem~\ref{thm:denef-vdd} we may write
  $A=\phi(\mathring I^m)$ for some quantifier-free
  $L^D_\RE$-formula $\phi$. By
  Proposition~\ref{prop:admissible-decomp} and
  Lemma~\ref{lem:basic-disjunction} we may write
  \begin{equation}
    A = \bigcup_{j=1}^k \bigcup_{i=1}^{n_j} \phi_{ji}(U_j)
  \end{equation}
  where $\phi_{ji}$ is a $U_j$-admissible basic $D$-formula. By the
  first part of Lemma~\ref{lem:alg-part-rules} it is clear that it
  will suffice to prove the claim with $A$ replaced by each
  $\phi_{ij}(U_j)$. We thus assume without loss of generality that
  $\phi$ is already a $U$-admissible basic $D$-formula and prove the
  claim for $A=\phi(U)$.

  Recall that $\tilde\phi$ is a $U$-admissible $D$-equation. We write
  $B=\tilde\phi(\mathring I^m)$. Applying
  Proposition~\ref{prop:D-equation-to-proj} to $\tilde\phi$ and using
  Theorem~\ref{thm:exploring-projection} we construct a locally
  analytic set $Y\subset\C^m$ such that $Y_\R=B$, and algebraic
  varieties $V_0,\ldots,V_m$ such that $V_j$ has pure dimension $j$
  and degree $O((\log H)^{\kappa'})$ such that
  \begin{equation}
    Y^\size(\cF,H) \subset Y(V_0)\cup\cdots\cup Y(V_m).
  \end{equation}
  By~\eqref{eq:alg-part-R} and $Y_\R=B$ we have
  \begin{equation}
    B(\cF,H)^\size\subset B(\cV_0)\cup\cdots\cup B(\cV_m), \qquad \cV_j:=(V_j)_\R
  \end{equation}
  Recall that $A$ is relatively open in $B$ by
  Lemma~\ref{lem:phi-tilde-rel-open}. Then
  \begin{equation}
    \begin{split}
      A^\size(\cF,H)&= A\cap(B^\size(\cF,H))\subset A\cap
      \bigcup_{j=0}^m B(\cV_j) = \bigcup_{j=0}^m ( A\cap B(\cV_j) ) \\ &= \bigcup_{j=0}^m A(\cV_j)
    \end{split}
  \end{equation}
  where the last equality is given by the second part of
  Lemma~\ref{lem:alg-part-rules}.

  If we write each $\cV_j$ as a union of connected smooth strata
  $\cV_j=\cup_l S_{j,l}$ then we have
  \begin{equation}
    A^\size(\cF,H)\subset \bigcup_j A(\cV_j) \subset \bigcup_{j,l} A(S_{j,l}).
  \end{equation}
  It remains to estimate the number and complexity of the strata.
  Recall from Lemma~\ref{lem:W-degrees} that each $V_j$ is cut out by
  $m+1$ complex equations of degree at most $\deg V_j$, which is
  bounded by $\beta_1:=O((\log H)^{\kappa'})$. The real-part $\cV_j$
  is cut out by the same equations and additional linear equations for
  the vanishing of all imaginary parts, and thus has complexity
  $(m,3m+2,\beta_1)$. By \cite[Theorem~2]{gv:strata} one can decompose
  $\cV_j$ into a union of $\beta_2:=\beta_1^{(2^{O(m)})}$ smooth (but
  not necessarily connected) semialgebraic sets of complexity
  $(m,\beta_2,\beta_2)$. Finally, by \cite[Theorem~16.13]{basu:book}
  each such semialgebraic set can be decomposed into its connected
  components, with the number of connected components bounded by
  $\beta_3=\beta_2^{(m^4)}$ and their complexity bounded by
  $(m,\beta_3,\beta_3)$ (a better estimate for the number of connected
  components follows from Theorem~\ref{thm:pfaffian-complexity}).
\end{proof}

\subsection{Estimate for $\R^\RE$-definable sets}

Consider the map $\tau:\R\to \mathring I$ given by
$x\to x/\sqrt{1+x^2}$. This map is a bijection between $\R$ and
$\mathring I$, with the inverse $\tau^{-1}:\mathring I\to\R$ given by
$y\to y/\sqrt{1-y^2}$. A straightforward verification
\cite[4.6]{vdd:Rre} shows that the $\tau$-images of the basic
relations of $\R^\RE$ are $L_\RE^D$-definable in $I$, and it follows
that every $\R^\RE$-definable set $A\subset\R^n$ has an
$L_\RE^D$-definable $\tau$-image $\tau(A)\subset\mathring{I}^m$.

\begin{Lem}\label{lem:Rre-Lre}
  Suppose $A\subset\mathring{I}^m$ is $\R^\RE$-definable. Then $A$ is
  $L_\RE^D$-definable.
\end{Lem}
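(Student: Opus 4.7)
The plan is to derive the lemma directly from the observation recorded in the paragraph just preceding it, namely: by the verification of \cite[4.6]{vdd:Rre} that the $\tau$-images of the basic relations of $\R^\RE$ are $L_\RE^D$-definable in $I$, a straightforward induction on the structure of an $\R^\RE$-formula $\phi(x_1,\ldots,x_n)$ produces an $L_\RE^D$-formula $\phi'(y_1,\ldots,y_n)$ equivalent to $\phi$ under the componentwise substitution $y_i=\tau(x_i)$. Atomic formulas are handled by \cite[4.6]{vdd:Rre}; Boolean connectives transport trivially; and a quantifier over $\R$ translates into a quantifier over $\mathring I=\{y\in I\colon -1<y<1\}$, which is itself $L_\RE^D$-definable. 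Consequently, every $\R^\RE$-definable $B\subset\R^n$ has $L_\RE^D$-definable $\tau$-image $\tau(B)\subset\mathring I^n$.

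The lemma is then the formal reverse implication. Set $B:=\tau^{-1}(A)\subset\R^m$, where $\tau^{-1}\colon\mathring I\to\R$ is applied componentwise. The map $\tau^{-1}$ has semialgebraic graph and is therefore $\R^\RE$-definable, so $B$ is $\R^\RE$-definable. By the observation above, $\tau(B)\subset\mathring I^m$ is $L_\RE^D$-definable. Since $\tau$ is a bijection $\R\to\mathring I$ and $A\subset\mathring I^m$, we have $\tau(B)=\tau(\tau^{-1}(A))=A$, so $A$ is $L_\RE^D$-definable. There is no genuine obstacle: the substantive content has been absorbed into \cite{vdd:Rre} and into the preceding paragraph, and the only care needed is the elementary bookkeeping that $\tau^{-1}\colon\mathring I\to\R$ is semialgebraic (hence $\R^\RE$-definable), which ensures that $B$ inherits $\R^\RE$-definability from $A$ and allows the two-sided $\tau$-correspondence to close the loop.
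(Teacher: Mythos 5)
Your argument is correct, and it closes the loop in a genuinely different way from the paper. Both proofs rest on the same preliminary observation (via \cite[4.6]{vdd:Rre}) that every $\R^\RE$-definable $B\subset\R^n$ has $L_\RE^D$-definable $\tau$-image. The paper applies this observation to $A$ itself to get that $\tau(A)\subset\mathring I'^m$ (with $I'=\tau(I)=[-\tfrac1{\sqrt2},\tfrac1{\sqrt2}]$) is $L_\RE^D$-definable, and then does its one piece of real work: it exhibits an explicit $L_\RE^D$-formula, $\exists z:\ z\ge0,\ z^2=1-y^2,\ x=D(y,z)$, defining $\tau^{-1}\rest{I'}$ inside the structure $I$ --- this is exactly where the restricted division $D$ and the fact that $|y|\le\sqrt{1-y^2}$ on $I'$ are needed --- and recovers $A=\tau^{-1}(\tau A)$. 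You instead pre-compose on the $\R^\RE$ side: since $\tau^{-1}$ is semialgebraic and $\R^\RE$ contains all semialgebraic sets, $B:=\tau^{-1}(A)$ is $\R^\RE$-definable, and a second application of the observation to $B$ gives directly that $\tau(B)=A$ is $L_\RE^D$-definable. This bypasses the need to write any $L_\RE^D$-formula for $\tau^{-1}$ and is shorter; the small price is that you invoke the $\tau$-image observation for the (generally unbounded) set $B$ rather than only for the bounded set $A$, which is legitimate since the observation is stated for arbitrary definable subsets of $\R^n$. The paper's route yields as a byproduct the $L_\RE^D$-definability of $\tau^{-1}\rest{I'}$ in $I$, i.e.\ a two-sided definable correspondence, which is mildly more informative but not needed elsewhere.
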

\begin{proof}
  Let $I':=\tau I=[-\tfrac1{\sqrt2},\tfrac1{\sqrt2}]$. By the above
  $\tau(A)\subset \mathring I'$ is $L_\RE^D$-definable. The
  restriction $\tau^{-1}\rest{I'}:I'\to[-1,1]$ is definable in $I$ by
  the $L_\RE^D$-formula
  \begin{equation}
    \phi(y,x) := \exists z : z\ge0, z^2=1-y^2, x=D(y,z). 
  \end{equation}
  Then $A=\tau^{-1}(\tau A)$ is $L_\RE^D$-definable as well.
\end{proof}

The following Theorem is the general form of our main result.

\begin{Thm}\label{thm:Rre-main}
  Let $A\subset\R^m$ be an $\R^\RE$-definable set. There exist
  integers $\kappa=\kappa(A)$ and $N=N(A,[\cF:\Q])$ with the following
  property: for any $H\in\N$ there exist at most
  $\beta:=N(\log H)^\kappa$ smooth connected semialgebraic subsets
  $S_\alpha\subset\R^m$ with complexity $(m,\beta,\beta)$ such that
  \begin{equation}
    A(\cF,H)\subset \bigcup_\alpha A(S_\alpha).
  \end{equation}
\end{Thm}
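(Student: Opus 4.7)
The plan is to reduce Theorem~\ref{thm:Rre-main} to Proposition~\ref{prop:Lre-main} by covering $\R^m$ with finitely many semialgebraic pieces, each mapped onto a subset of $\mathring{I}^m$ by a semialgebraic bijection that preserves absolute multiplicative heights. I would partition $\R=(-\infty,-1)\sqcup\{-1\}\sqcup(-1,1)\sqcup\{1\}\sqcup(1,\infty)$ and take the induced product decomposition of $\R^m$ into $5^m$ semialgebraic pieces. For each ``generic'' piece $P$ (no coordinate fixed at $\pm1$), define the $\R^\RE$-definable bijection $\phi_P\colon P\to\phi_P(P)\subset\mathring{I}^m$ which is the identity on coordinates $x_i\in(-1,1)$ and $x_i\mapsto1/x_i$ on coordinates with $|x_i|>1$. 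Its essential feature is that $H(1/\alpha)=H(\alpha)$ for every nonzero $\alpha\in\Qa$, so $\phi_P$ sends $(A\cap P)(\cF,H)$ bijectively onto $A_P(\cF,H)$, where $A_P:=\phi_P(A\cap P)\subset\mathring{I}^m$.

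Since $A_P$ is $\R^\RE$-definable, hence $L_\RE^D$-definable by Lemma~\ref{lem:Rre-Lre}, Proposition~\ref{prop:Lre-main} produces semialgebraic strata $T_{P,\alpha}\subset\R^m$ of complexity $(m,\beta,\beta)$ with $\beta\le N(A_P,[\cF:\Q])(\log H)^{\kappa(A_P)}$ such that $A_P(\cF,H)\subset\bigcup_\alpha A_P(T_{P,\alpha})$. Since $\phi_P$ is a semialgebraic diffeomorphism onto $\phi_P(P)$ and the germ-based relation of Definition~\ref{def:xw} is preserved by local homeomorphisms, pulling back yields
\[
(A\cap P)(\cF,H)\subset\bigcup_\alpha (A\cap P)\bigl(\phi_P^{-1}(T_{P,\alpha}\cap\phi_P(P))\bigr).
\]
Each pre-image is semialgebraic of complexity polynomial in $\log H$, and further decomposition into smooth connected pieces via \cite{gv:strata} and \cite{basu:book} (as in the final step of the proof of Proposition~\ref{prop:Lre-main}) incurs only an additional polynomial factor in $\log H$.

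The boundary pieces, where some coordinates are fixed at $\pm1$, are handled by induction on $m$ with the trivial base case $m=0$. A piece of the form $\{c\}\times\R^{m-k}$ with $c\in\{-1,1\}^k$ corresponds to the slice $A_c:=\{x\in\R^{m-k}:(c,x)\in A\}$, which is $\R^\RE$-definable; the inductive hypothesis yields smooth connected semialgebraic strata $S'\subset\R^{m-k}$ covering $A_c(\cF,H)$, and the lifts $\{c\}\times S'\subset\R^m$ have the same complexity and are compatible with $A(\cdot)$ because the germ of $\{c\}\times S'$ at $(c,w)$ in $\R^m$ is contained in $A$ if and only if the germ of $S'$ at $w$ in $\R^{m-k}$ is contained in $A_c$. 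Summing contributions over the $5^m$ pieces and absorbing constants into $N$ yields the theorem, with $\kappa(A)$ the maximum over the generic-piece exponents and the inductive exponents from the boundary slices.

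The only step requiring a direct (and straightforward) verification is that the germ-based relation $A(W)$ transfers correctly under the bijections $\phi_P$ and under restriction to coordinate hyperplanes; both compatibilities follow immediately from Definition~\ref{def:xw}, so no deeper analytic or arithmetic input is needed beyond Proposition~\ref{prop:Lre-main}.
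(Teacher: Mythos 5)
Your proposal is correct and takes essentially the same route as the paper: reduce to Proposition~\ref{prop:Lre-main} via Lemma~\ref{lem:Rre-Lre} by mapping pieces of $\R^m$ into $\mathring I^m$ with definable, height-preserving coordinate changes built from $x_i\mapsto 1/x_i$, then pull back the strata (noting that the germ relation of Definition~\ref{def:xw} transfers under these local diffeomorphisms). The only difference is cosmetic: the paper absorbs the locus $|x_i|=1$ by also allowing translations $x_i\mapsto x_i+1$, which change heights only by a bounded factor, whereas you handle those codimension-one slices by induction on $m$ -- both work.
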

\begin{proof}
  For $A\subset\mathring I^m$ the claim follows by
  Lemma~\ref{lem:Rre-Lre} and Proposition~\ref{prop:Lre-main}. For the
  general case, note that the definable transformations $x_i\to 1/x_i$
  and $x_i\to x_i+1$ do not affect the heights of the points of $A$ by
  more than a constant factor, and their pullbacks preserve the
  smoothness of $S_\alpha$ do not affect the degrees of $S_\alpha$ by
  more than a constant factor. It is easy to construct a finite set
  $\{T_j\}$ where each $T_j:\R^m\to\R^m$ is a finite composition of
  the transformations above such that for any $A\subset\R^m$,
  \begin{equation}
    A = \bigcup_j T_j^{-1} ( T_j(A)\cap\mathring I^m ).
  \end{equation}
  The claim now follows from the case $A\subset\mathring I^m$ already
  considered.
\end{proof}

\begin{Rem}
  The reader may note that the strata $S_\alpha$ in
  Theorem~\ref{thm:Rre-main} play essentially the same role as the
  ``basic blocks'' introduced in \cite{pila:algebraic-points}. They
  will be used in a similar way for the proof of
  Theorem~\ref{thm:main-k}.
\end{Rem}

\subsection{Proofs of the main theorems}

In this section we prove our two main results Theorem~\ref{thm:main-F}
and Theorem~\ref{thm:main-k}. We start with Theorem~\ref{thm:main-F}
which is essentially an immediate consequence of
Theorem~\ref{thm:Rre-main}.

\begin{proof}[Proof of Theorem~\ref{thm:main-F}.]
  Let $H\in\N$. Consider the collection of $\beta$ smooth connected
  semialgebraic sets $S_\alpha$ obtained from
  Theorem~\ref{thm:Rre-main}, such that
  \begin{equation}
    A(\cF,H)\subset \bigcup_\alpha A(S_\alpha).
  \end{equation}
  By Lemma~\ref{lem:alg-part-trans} we see that $A^\trans(\cF,H)$ is
  contained in the union of the zero-dimensional strata $S_\alpha$.
  Then $\#(A^\trans)(\cF,H)$ is bounded by the number of strata, which
  is of the required form by Theorem~\ref{thm:Rre-main}.
\end{proof}

We now pass to the proof of Theorem~\ref{thm:main-k}, which is a
direct adaptation of the approach of \cite{pila:algebraic-points}
(with slightly more effort needed to obtain the necessary degree
estimates). We introduce some additional notation for this purpose.
Let $\cP_{\le k}:=\R^{k+1}\setminus\{{\mathbf0}\}$ and
$\cP_k:=\{\vc\in\cP_{\le k}:c_k\neq0\}$. For $\vc\in\cP_{\le k}$ let
$P_\vc\in\R[x]$ denote the polynomial
\begin{equation}
  P_\vc(X) := \sum_{j=0}^k c_j X^j.
\end{equation}
We let $D_k\subset\cP_k$ denote the \emph{discriminant}, i.e. the set
of $\vc\in\cP_k$ such that $P_\vc$ has a (possibly complex) double
zero. Then $D_k$ is an algebraic subset and its complement
$\tilde\cP_k:=\cP_k\setminus D_k$ is an open semialgebraic set. We let
$Z_k\subset\tilde\cP_k\times\R$ be the set
\begin{equation}
  Z_k := \{(\vc,x)\in\tilde\cP_k\times\R : P_\vc(x)=0\}.
\end{equation}
\begin{Lem}\label{lem:branch-select}
  Let $\{U_\chi\}$ denote the connected components of $\tilde\cP_k$.
  For each $\chi$ there is a tuple of at most $k$ real-analytic
  algebraic functions $\phi_{\chi,j}:U_\chi\to\R$ such that
  \begin{equation}
    Z_k = \bigcup_{\chi,j} \Gamma_{\phi_{\chi,j}}
  \end{equation}
  where $\Gamma_\phi$ denotes the graph of $\phi$.
\end{Lem}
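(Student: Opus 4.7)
The plan is to exploit the fact that on $\tilde\cP_k$ the polynomial $P_\vc$ has $k$ distinct (complex) roots, so each real root is a simple root and, by the implicit function theorem applied to $P_\vc(x)=0$ with $\partial_x P_\vc \neq 0$ at a simple root, extends \emph{locally} around any $\vc_0 \in \tilde\cP_k$ as a real-analytic algebraic function of $\vc$. The entire content of the lemma is that these local branches assemble into at most $k$ \emph{global} real-analytic functions on each connected component $U_\chi$.

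First I would verify that the number of real roots of $P_\vc$ is locally constant on $\tilde\cP_k$, hence constant on each $U_\chi$. Indeed, since the $k$ complex roots depend continuously on $\vc$ and never collide on $\tilde\cP_k$ (no multiple roots), a real root can cease to be real only by colliding with another real root to produce a complex conjugate pair; but such a collision requires crossing $D_k$. Fix $U_\chi$ and let $r_\chi \le k$ be this common number of real roots.

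Next I would trivialize the covering $Z_k \cap (U_\chi \times \R) \to U_\chi$ by ordering: for each $\vc \in U_\chi$ write the real roots as
\begin{equation}
  \phi_{\chi,1}(\vc) < \phi_{\chi,2}(\vc) < \cdots < \phi_{\chi,r_\chi}(\vc).
\end{equation}
Each $\phi_{\chi,j}$ is continuous on $U_\chi$, because the real roots vary continuously with $\vc$ and never cross each other on $U_\chi$ (a crossing would force two roots to coincide, again crossing $D_k$). Locally $\phi_{\chi,j}$ coincides with the branch given by the implicit function theorem at any point, hence $\phi_{\chi,j}$ is real-analytic on $U_\chi$. It is algebraic since by construction $P_\vc(\phi_{\chi,j}(\vc)) \equiv 0$. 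Taking the union over $j=1,\ldots,r_\chi$ and over all components $\chi$ recovers $Z_k$.

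The only real subtlety is the trivialization step, i.e. showing that the locally defined root branches glue coherently along $U_\chi$; this is precisely what the ordering argument achieves, and it is where the exclusion of the discriminant (which ensures that distinct real roots stay distinct throughout $U_\chi$) is used essentially.
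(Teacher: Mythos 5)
Your proof is correct and follows essentially the same route as the paper: the number of real roots is constant on each connected component of the discriminant complement, and the branches are defined globally by ordering the real roots, which yields Nash (real-analytic algebraic) functions. You simply spell out in more detail the local-constancy and gluing steps that the paper leaves implicit.
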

\begin{proof}
  Since $\tilde\cP_k$ is the complement of the discriminant, the
  number of zeros of $P_\vc$ for $\vc\in U_\chi$ is some constant
  $n_\chi\le k$, and since $P_\vc$ has no double zeros one can
  uniquely choose the branch $\phi_{\chi,j}$ for $j=1,\ldots,n_\chi$
  to be the $j$-th root of $P_\vc$ in increasing order. The branches
  thus constructed are Nash functions, i.e. real analytic and
  algebraic, on $U_\chi$. Their graphs cover $Z_k\cap(U_\chi\times\R)$
  by construction.
\end{proof}

Following \cite{pila:algebraic-points} we introduce the following
height function. For an algebraic number $\alpha\in\Qa$ we define
\begin{equation}
  H_k^\poly(\alpha) = \min\{H(\vc) : \vc\in\cP_{\le k}(\Q), \quad P_\vc(\alpha)=0\}
\end{equation}
and $H(\alpha)=\infty$ if $[\Q(\alpha):\Q]>k$. Then whenever
$[\Q(\alpha):\Q]\le k$ we have \cite[5.1]{pila:algebraic-points}
\begin{equation}\label{eq:H-vs-Hpoly}
  H_k^\poly(\alpha) \le 2^k H(\alpha)^k.
\end{equation}
For a set $A\subset\R^m$ we define $A^\poly(k,H)$ in analogy
with $A(k,H)$ replacing $H(\cdot)$ by $H^\poly(\cdot)$.

\begin{proof}[Proof of Theorem~\ref{thm:main-k}, adapted from \protect{\cite{pila:algebraic-points}}.]
  As in the proof of Theorem~\ref{thm:main-F},
  from~\eqref{eq:H-vs-Hpoly} we deduce that up to minor rescaling it
  will suffice to prove the claim with $H(\cdot)$ replaced by
  $H^\poly(\cdot)$. Let $\vk=(k_1,\ldots,k_m)\in\N^m$ and denote
  \begin{align}
    A(\vk) &:= \{\vx\in A:[\Q(x_1):\Q]=k_1,\ldots,[\Q(x_m):\Q]=k_m\}, \\
    A(\vk,H) &:= \{\vx\in A(\vk):H(\vx)\le H\}.  
  \end{align}
  Then
  \begin{equation}
    A(k,H) = \bigcup_{\vk:1\le k_j\le k} A(\vk,H)
  \end{equation}
  and it will suffice to prove the claim for fixed $\vk$ in place of
  $k$.

  Denote $\tilde\cP^\vk:=\prod_{j=1}^m\tilde\cP^{k_j}$ and let
  $Z_\vk\subset \tilde\cP_\vk\times\R^m$ be the set
  \begin{multline}
    Z_\vk := \{(\vc^1,\ldots,\vc^m,x_1,\ldots,x_m)\in \tilde\cP^k\times\R^m :\\
    P_{\vc^1}(x_1)=\cdots=P_{\vc^m}(x_m)=0\}.
  \end{multline}
  Let $\{U_\vchi\}$ denote the connected components of
  $\tilde\cP_\vk$. For each $\vchi$ there is a tuple of at most $k^m$
  real-analytic algebraic maps $\phi_{\vchi,j}:U_\vchi\to\R^m$ such
  that
  \begin{equation}\label{eq:Z-vk-branches}
    Z_\vk = \bigcup_{\vchi,j} \Gamma_{\phi_{\vchi,j}}
  \end{equation}
  where $\Gamma_\phi$ denotes the graph of $\phi$. Indeed, $U_\vchi$
  are just direct products of the connected components of
  $\tilde\cP^{k_j}$, and the claim follows by taking the direct
  products of the functions constructed in
  Lemma~\ref{lem:branch-select}.

  For each component $U_\vchi$ and map $\phi_{\vchi,j}$ we define the set
  \begin{equation}\label{eq:A-vchi-def}
    A_{\vchi,j}\subset\tilde\cP^\vk, \quad A_{\vchi,j} := \{ p\in\tilde\cP^\vk : \phi_{\vchi,j}(p)\in A\}.
  \end{equation}
  Since $\phi_{\vchi,j}$ are semialgebraic, $A_{\vchi,j}$ are
  definable in $\R^\RE$. Consider the collection of at most
  $\beta=N(\log H)^\kappa$ smooth connected semialgebraic strata
  $S_{\vchi,j,\alpha}$ obtained from Theorem~\ref{thm:Rre-main}, such
  that
  \begin{equation}
    A_{\vchi,j}(\Q,H)\subset \bigcup_\alpha A_{\vchi,j}(S_{\vchi,j,\alpha}).
  \end{equation}

  Let $\vx\in A(\vk,H)$, and for $l=1,\ldots,m$ let
  $\vc^l\in\cP^{\le k_l}$ be a tuple satisfying $P_{\vc^l}(x_l)=0$ and
  $H(\vc^l)\le H$. Since $[\Q(x_l):\Q]=k_l$ we see that $P_{\vc^l}$ is
  (up to a scalar) the minimal polynomial of $x_l$, so it has degree
  $k_l$ and no multiple roots, i.e. $\vc^l\in\tilde\cP^{k_l}$. Write
  $p=(\vc^1,\ldots,\vc^m)\in\tilde\cP^\vk$. Then $(p,x)\in Z_\vk$ and
  by~\eqref{eq:Z-vk-branches} we have $\vx=\phi_{\vchi,j}(p)$ for some
  pair $(\vchi,j)$. By~\eqref{eq:A-vchi-def} we have
  $p\in A_{\vchi,j}(\Q,H)$. Choose one of the strata
  $S_{\vchi,j,\alpha}$ such that
  $p\in A_{\vchi,j}(S_{\vchi,j,\alpha})$ and denote it by $S(p)$ and
  its germ at $p$ by $S_p$. Then $S_p\subset A_{\vchi,j}$, and
  by~\eqref{eq:A-vchi-def} we have
  $Y_p:=\phi_{\vchi,j}(S_p)\subset A$. Note that $Y_p$ is a
  semialgebraic set containing $\phi_{\vchi,j}(p)=\vx$.

  Suppose $\vx\in A^\trans$. Then $\phi_{\vchi,j}$ is constant on
  $S_p$, for otherwise $Y_p$ would be a connected positive-dimensional
  semialgebraic set containing $\vx$. Since $S(p)$ is connected,
  nonsingular and analytic and $\phi_{\vchi,j}$ is real analytic it
  follows that $\phi_{\vchi,j}$ is constant (with value $\vx$) on the
  whole of $S(p)$.

  From the above it follows that there is an injective correspondence
  $p\to S(p)$ between the points $\vx\in A^\trans(\vk,H)$ and the
  strata $S_{\vchi,j,\alpha}$. The number of these strata for each
  $\vchi,j$ is at most $\beta$, and since the number of pairs $\vchi,j$
  is independent of $H$ we obtain a bound of the required form (but
  note that the exponent $\kappa$ now depends on the sets
  $A_{\vchi,j}$, i.e. on $k$ as well as $A$). This finishes the proof.
\end{proof}

\section{Concluding remarks}
\label{sec:concluding}

\subsection{Effectivity}
\label{sec:effectivity}
While we do not compute all explicit constants in this paper in the
interest of space, all of the estimates presented for
holomorphic-Pfaffian varieties and their admissible projections are
entirely effective in the complexity of the holomorphic-Pfaffian
varieties involved. We do give an effective estimate for the exponent
$\kappa$ in Theorem~\ref{thm:exploring-projection} as we believe this
may be of some interest in possible diophantine applications.

As a consequence of the above, our estimates for sets defined by
quantifier-free $L^D_\RE$-formulas can be made effective in terms of
the complexity of the formulas. We do not study the effectivity of the
quantifier-elimination result of \cite{vdd:Rre}, and we therefore
cannot make a direct statement about the effectivity of our main
results for general $\R^\RE$-definable sets. However, we believe that
by a combination of Pfaffian techniques and the proof of
\cite{vdd:Rre} it is possible to obtain an effective statement in this
context as well.

\subsection{Uniformity with respect to parameters}
Our approach is essentially uniform over definable families, and the
results could be developed in this additional generality in the same
way as this is done in \cite{me:interpolation} for the subanalytic
context. We avoided this extra generality in this paper in the
interest of preserving clarity; and also since in view
of~\secref{sec:effectivity} we believe a detailed analysis should give
estimates depending only on the complexity of the formulas involved,
and hence \emph{a-priori} uniform over families.

\subsection{Generalization to other structures}

We have focused in this paper on the structure $\R^\RE$ since it
contains the restricted form of Wilkie's original conjecture. It is of
course natural to make similar conjectures for sets definable in other
``tame'' geometric structures. We identify two possible categories for
such a generalization: structures generated by \emph{elliptic}
functions, for instance the Weierstrass $\wp$-function (for a fixed
lattice/s) and possibly higher-dimensional abelian functions; and
structures generated by \emph{modular} functions, for instance Klein's
$j$-invariant and possibly universal covering maps of more general
Shimura curves/varieties. In both cases one must of course restrict
the functions to a suitable fundamental domain to avoid periodicity
and obtain an O-minimal structure \cite{ps:theta-definability}. Both
categories are closely related to diophantine problems of unlikely
intersections: the former to the circle of problems around the
Manin-Mumford conjecture, and the latter to the circle of problems
around the Andr\'e-Oort conjecture \cite{pila:andre-oort}.

The elliptic case appears to be possibly amenable to our approach.
Namely, a surprising work of Macintyre \cite{macintyre:pfaffian} shows
that the real an imaginary parts of $\wp^{-1}$ (on an appropriate
domain) are real-Pfaffian functions, thus placing $\wp$ in the
holomorphic-Pfaffian category (in analogy with the function $e^z$
whose real an imaginary parts $e^x$ and $\sin x$, restricted to an
appropriate domain, are real-Pfaffian and generate $\R^\RE$). From the
model-theoretic side, the work of Bianconi \cite{bianconi:elliptic}
establishes a close analog of the results of \cite{vdd:Rre} for the
structures generated by real and imaginary parts of elliptic (or more
generally abelian) functions. It therefore appears likely that the
methods used in this paper could be carried over to the elliptic
category (at least for elliptic functions).

The modular category currently appears to be more challenging: we have
no reason to believe that the $j$-function is Pfaffian (or definable
from Pfaffian functions). However, we note that the $j$-function (as
well as other modular functions) does satisfy certain natural
non-Pfaffian systems of differential equations, and one may hope that
some progress in the analysis of such systems could provide a suitable
replacement for the Pfaffian category.

\bibliographystyle{plain} \bibliography{nrefs}

\begin{thebibliography}{10}

\bibitem{basu:book}
Saugata Basu, Richard Pollack, and Marie-Fran{\c{c}}oise Roy.
\newblock {\em Algorithms in real algebraic geometry}, volume~10 of {\em
  Algorithms and Computation in Mathematics}.
\newblock Springer-Verlag, Berlin, second edition, 2006.

\bibitem{bianconi:elliptic}
Ricardo Bianconi.
\newblock Model completeness results for elliptic and abelian functions.
\newblock {\em Ann. Pure Appl. Logic}, 54(2):121--136, 1991.

\bibitem{me:interpolation}
Gal Binyamini and Dmitry Novikov.
\newblock The {P}ila-{W}ilkie theorem for subanalytic families: a complex
  analytic approach.
\newblock {\em preprint}, 2016.

\bibitem{bombieri-pila}
E.~Bombieri and J.~Pila.
\newblock The number of integral points on arcs and ovals.
\newblock {\em Duke Math. J.}, 59(2):337--357, 1989.

\bibitem{bombieri:heights}
Enrico Bombieri and Walter Gubler.
\newblock {\em Heights in {D}iophantine geometry}, volume~4 of {\em New
  Mathematical Monographs}.
\newblock Cambridge University Press, Cambridge, 2006.

\bibitem{butler}
Lee~A. Butler.
\newblock Some cases of {W}ilkie's conjecture.
\newblock {\em Bull. Lond. Math. Soc.}, 44(4):642--660, 2012.

\bibitem{denef-vdd}
J.~Denef and L.~van~den Dries.
\newblock {$p$}-adic and real subanalytic sets.
\newblock {\em Ann. of Math. (2)}, 128(1):79--138, 1988.

\bibitem{yomdin-friedland}
Omer Friedland and Yosef Yomdin.
\newblock Vitushkin-type theorems.
\newblock In {\em Geometric aspects of functional analysis}, volume 2116 of
  {\em Lecture Notes in Math.}, pages 147--157. Springer, Cham, 2014.

\bibitem{gv:strata}
A.~Gabri{\`e}lov and N.~Vorobjov.
\newblock Complexity of stratifications of semi-{P}faffian sets.
\newblock {\em Discrete Comput. Geom.}, 14(1):71--91, 1995.

\bibitem{gv:complexity}
Andrei Gabrielov and Nicolai Vorobjov.
\newblock Complexity of computations with {P}faffian and {N}oetherian
  functions.
\newblock In {\em Normal forms, bifurcations and finiteness problems in
  differential equations}, volume 137 of {\em NATO Sci. Ser. II Math. Phys.
  Chem.}, pages 211--250. Kluwer Acad. Publ., Dordrecht, 2004.

\bibitem{gv:compact-approx}
Andrei Gabrielov and Nicolai Vorobjov.
\newblock Approximation of definable sets by compact families, and upper bounds
  on homotopy and homology.
\newblock {\em J. Lond. Math. Soc. (2)}, 80(1):35--54, 2009.

\bibitem{gr:analytic}
Robert~C. Gunning and Hugo Rossi.
\newblock {\em Analytic functions of several complex variables}.
\newblock AMS Chelsea Publishing, Providence, RI, 2009.
\newblock Reprint of the 1965 original.

\bibitem{jones-thomas}
G.~O. Jones and M.~E.~M. Thomas.
\newblock The density of algebraic points on certain {P}faffian surfaces.
\newblock {\em Q. J. Math.}, 63(3):637--651, 2012.

\bibitem{khovanskii:fewnomials}
A.~G. Khovanski{\u\i}.
\newblock {\em Fewnomials}, volume~88 of {\em Translations of Mathematical
  Monographs}.
\newblock American Mathematical Society, Providence, RI, 1991.
\newblock Translated from the Russian by Smilka Zdravkovska.

\bibitem{macintyre:pfaffian}
Angus Macintyre.
\newblock Some observations about the real and imaginary parts of complex
  {P}faffian functions.
\newblock In {\em Model theory with applications to algebra and analysis.
  {V}ol. 1}, volume 349 of {\em London Math. Soc. Lecture Note Ser.}, pages
  215--223. Cambridge Univ. Press, Cambridge, 2008.

\bibitem{ps:theta-definability}
Ya'acov Peterzil and Sergei Starchenko.
\newblock Definability of restricted theta functions and families of abelian
  varieties.
\newblock {\em Duke Math. J.}, 162(4):731--765, 2013.

\bibitem{pila:density-Q}
J.~Pila.
\newblock Geometric postulation of a smooth function and the number of rational
  points.
\newblock {\em Duke Math. J.}, 63(2):449--463, 1991.

\bibitem{pila-wilkie}
J.~Pila and A.~J. Wilkie.
\newblock The rational points of a definable set.
\newblock {\em Duke Math. J.}, 133(3):591--616, 2006.

\bibitem{pila:subanalytic-dilation}
Jonathan Pila.
\newblock Integer points on the dilation of a subanalytic surface.
\newblock {\em Q. J. Math.}, 55(2):207--223, 2004.

\bibitem{pila:subanalytic}
Jonathan Pila.
\newblock Rational points on a subanalytic surface.
\newblock {\em Ann. Inst. Fourier (Grenoble)}, 55(5):1501--1516, 2005.

\bibitem{pila:pfaff}
Jonathan Pila.
\newblock The density of rational points on a {P}faff curve.
\newblock {\em Ann. Fac. Sci. Toulouse Math. (6)}, 16(3):635--645, 2007.

\bibitem{pila:algebraic-points}
Jonathan Pila.
\newblock On the algebraic points of a definable set.
\newblock {\em Selecta Math. (N.S.)}, 15(1):151--170, 2009.

\bibitem{pila:exp-alg-surface}
Jonathan Pila.
\newblock Counting rational points on a certain exponential-algebraic surface.
\newblock {\em Ann. Inst. Fourier (Grenoble)}, 60(2):489--514, 2010.

\bibitem{pila:andre-oort}
Jonathan Pila.
\newblock O-minimality and the {A}ndr\'e-{O}ort conjecture for {$\mathbb C^n$}.
\newblock {\em Ann. of Math. (2)}, 173(3):1779--1840, 2011.

\bibitem{scanlon:survey}
Thomas Scanlon.
\newblock Counting special points: logic, {D}iophantine geometry, and
  transcendence theory.
\newblock {\em Bull. Amer. Math. Soc. (N.S.)}, 49(1):51--71, 2012.

\bibitem{vdd:Rre}
Lou van~den Dries.
\newblock On the elementary theory of restricted elementary functions.
\newblock {\em J. Symbolic Logic}, 53(3):796--808, 1988.

\bibitem{wilkie:Rexp}
A.~J. Wilkie.
\newblock Model completeness results for expansions of the ordered field of
  real numbers by restricted {P}faffian functions and the exponential function.
\newblock {\em J. Amer. Math. Soc.}, 9(4):1051--1094, 1996.

\end{thebibliography}

\end{document}